\numberwithin{equation}{section}
\newcommand{\cmark}{\ding{51}}%
\newcommand{\xmark}{\ding{55}}%
\newcommand{\qmark}{\textbf{?}}%
\newcolumntype{Y}{>{\centering\arraybackslash}X}
\newcommand{\eqrefit}[1]{{$({\ref{#1}})$}}
\newcommand{\reCone}{\mathrm{rec}\,}
\newcommand{\lineality}{\mathrm{lin}\,}
\newcommand{\closure}{\mathrm{cl}\,}
\newcommand{\norm}[1]{\lVert{#1}\rVert}
\newcommand{\inProd}[2]{\langle #1 , #2 \rangle }
\newcommand{\PSDcone}[1]{{\mathcal{S}^{#1}_+}}	
\newcommand{\doubly}[1]{{\mathcal{D}^{#1}}}
\newcommand{\minFaceset}[2]{ {\text{minFace}(#1,#2)}}
\newcommand{\stdMap}{ {\mathcal{A}}}
\newcommand{\stdCone}{ {\mathcal{K}}}
\newcommand{\stdAffine}{ \mathcal{V}}
\newcommand{\stdFace}{ \mathcal{F}}
\newcommand{\stdFaceC}{F}
\newcommand{\matRange}{{\mathrm{ range } \,}}
\newcommand{\ambSpace}{\mathcal{E}}
\renewcommand{\Re}{\mathbb{R}}    
\renewcommand{\S}{\mathcal{S}}
\newcommand{\face}{\mathrel{\unlhd}}
\DeclareMathOperator{\lspan}{span}
\DeclareMathOperator{\aff}{aff}
\DeclareMathOperator{\cone}{cone}
\DeclareMathOperator{\conv}{conv}
\DeclareMathOperator{\dist}{dist}
\DeclareMathOperator{\ext}{ext}
\DeclareMathOperator{\interior}{int}
\DeclareMathOperator{\reInt}{ri}
\DeclareMathOperator*{\Argmax}{arg\ max}
\newcommand{\RR}{\mathbb{R}}
\newcommand{\R}{\mathbb{R}}
\theoremstyle{definition}
\newtheorem{definition}{Definition}[section]
\newtheorem{example}[definition]{Example}
\theoremstyle{theorem}
\newtheorem{lemma}[definition]{Lemma}
\newtheorem{proposition}[definition]{Proposition}
\newtheorem{corollary}[definition]{Corollary}
\newtheorem{theorem}[definition]{Theorem}
\newtheorem*{proposition*}{Proposition}
\theoremstyle{remark}
\newtheorem{remark}[definition]{Remark}
\title{Amenable cones are particularly nice}
\author{Bruno F.\ Louren\c{c}o \and Vera Roshchina \and James Saunderson}
\begin{document}
\maketitle

\begin{abstract} 
Amenability is a geometric property of convex cones that is stronger than facial exposedness and assists in the study of error bounds for conic feasibility problems. In this paper we establish numerous properties of amenable cones, and investigate the relationships between amenability and other properties of convex cones, such as niceness and projectional exposure. 

We show that the amenability of a compact slice of a closed convex cone is equivalent to the amenability of the cone, and prove several results on the preservation of amenability under intersections and other convex operations. It then follows that homogeneous, doubly nonnegative and other cones that can be represented as slices of the cone of positive semidefinite matrices are amenable.

It is known that projectionally exposed cones are amenable and that amenable cones are nice, however the converse statements have been open questions. We construct an example of a four-dimensional cone that is nice but not amenable. We also show that amenable cones are projectionally exposed in dimensions up to and including four. 

We conclude with a discussion on open problems related to facial structure of convex sets that we came across in the course of this work, but were not able to fully resolve.
\end{abstract}


\section{Introduction}

Amenability was introduced in \cite{L19} in the context of error bounds for convex cones. In particular, consider the  following conic feasibility problem
\begin{equation}\label{eq:cfp}
\mathrm{ find }\quad x \in \stdCone \cap \mathcal{V}, \tag{CFP}
\end{equation}
where $\stdCone$ is a closed convex cone and $\mathcal{V}$ is an affine subspace. 
If $\stdCone$ is an \emph{amenable cone}, there are a number of techniques 
that simplify the study of error bounds for the system \eqref{eq:cfp}, especially when the goal is to obtain bounds that hold without constraint qualifications, see \cite{L19}.

Given the ubiquity and the usefulness of error bounds throughout optimization (see, e.g., \cite{Pang97,LP98}), it is natural to try to develop our understanding of amenability. In this work we extend the notion of amenable cones to arbitrary convex sets. Doing so allows us to show that the intersection of amenable sets is amenable and that all affine slices of an amenable cone must be amenable. Conversely, if a cone is generated by a compact amenable slice, it must be amenable. 

Amenability is a stronger form of \emph{facial exposedness}, which is a notion that goes back at least to the 1930s \cite{St35}. There are several other ways to strengthen the classical notion of facial exposure that are commonly used in the literature. The notion of \emph{niceness} (facial dual completeness) has its origins in optimality conditions for general conic convex optimization problems and in the facial reduction algorithm of Borwein and Wolkowicz \cite{BW81}, see Remarks~6.1 and 6.2 therein. (The name \emph{nice} itself seems to have appeared later.) Pataki has shown that nice cones admit extended duals that fix certain theoretical issues related to classical Lagrangian duality \cite{Pa13_2}. Niceness also features in results on when a linear image of a dual of a convex cone is closed \cite{LiuPataki,PatakiLI} and in the study of conic lifts of convex sets: when a cone is nice, certain results related to lifts can be sharpened, see \cite[Corollary~1]{GPR13}. 
Pataki showed in \cite{Pa13_2} that nice cones are always facially exposed and conjectured that the converse was true. This was disproved in \cite{Vera}, where a four-dimensional cone that is facially exposed but not nice is constructed. Niceness also appears to have a direct relation to error bounds: necessary and sufficient conditions for niceness were obtained using subtransversality-like tangential relations in \cite{RT}. It was shown in \cite{L19} that amenable cones are nice. In this paper we show that nice cones are not always amenable.

Another notion that we pay close attention to in this paper is \emph{projectional exposedness}, which goes back to \cite{BW81}, also in connection to optimality conditions for conic convex optimization problems and the so-called \emph{facial reduction algorithm}. See also \cite{BLP87,PL88,ST90}. It was shown in \cite{L19} that projectionally exposed cones are amenable. In this paper we show that the converse is true in dimensions up to and including four. In particular, if there exists an amenable cone that is not projectionally exposed, it must have dimension at least five.

Finally, we show that homogeneous and doubly nonnegative cones are amenable, in particular generalising the previously known result for symmetric cones \cite{L19}. This contributes to the evidence that amenability is a valuable notion that captures the benign properties of many important classes of structured cones.

This paper is organised as follows. Section~\ref{sec:prel}
contains preliminaries on the facial structure of convex sets and cones: we state and provide references for known technical results that are used throughout the paper. 

In Section~\ref{sec:amenable} we discuss basic properties of amenable sets. 
In Section~\ref{ssec:defbounded} we extend the definition of amenability from cones to general convex sets, discussing the subtleties related to non-compactness that are absent in the conic setting. (Specifically, see Example~\ref{ex:am_bd} based on a geometric construction from \cite{Sturm00}). We highlight the motivation via subtransversality, proving that amenability of a face is equivalent to subtransversality of the affine span of $F$ and the set $C$ in  Proposition~\ref{prop:am_eq}. We also demonstrate that amenability is preserved under some common convex operations, such as intersections and direct products (see Proposition~\ref{prop:am_int}).

Section~\ref{sec:slices} is dedicated to showing that amenability of a cone is equivalent to the amenability of its compact base. Studying amenability of slices often makes the geometry more intuitive, and reduces the dimension of the problem.

In Section~\ref{sec:nicenotamen} we construct an example of a cone that is amenable but not nice (facially dual complete).

Section~\ref{sec:amenproj} is dedicated to the relationship between amenability and projectional exposure. We prove that for amenable cones, faces of codimension one are projectionally exposed (see Theorem~\ref{thm:co_proj}), and this allows us to conclude that all amenable cones in spaces of dimension at most 4 are projectionally exposed (Corollary~\ref{cor:proj}).


In the last section we state open questions related to facial structure of convex sets and provide additional insights.

%
\section{Preliminaries}\label{sec:prel}
Here we recall some facts about convex sets and their faces. We let $\ambSpace$ denote some finite dimensional Euclidean space equipped with an 
inner product $\inProd{\cdot}{\cdot}$ and an induced norm 
$\norm{\cdot}$.
Let $C\subseteq \ambSpace$ be a convex set. We denote its closure, relative interior, interior,
affine hull, dimension, span and orthogonal complement by $\closure C, \reInt C, \interior C, \aff C, \dim{C}, \lspan C, C^\perp$, respectively.
The recession cone of $C$ is denoted by $\reCone C$ and its lineality space by $\lineality C$, so that 
$\lineality C = \reCone C \cap (- \reCone C)$.
We denote by $\cone C$ the cone generated by $C$, i.e., 
\[
\cone C \coloneqq \{\lambda x \mid x \in C, \lambda \geq 0 \}.
\]
Given $x\in \ambSpace$, we define the distance from $x$ to $C$ as 
\begin{equation}\label{eq:def_dist}
\dist(x,C) \coloneqq \inf \{\norm{x-y} \mid y \in C \}.
\end{equation}
If $U \subseteq \ambSpace$ is an arbitrary subset, we denote by $\conv U$ the convex hull of $U$.

Throughout the paper we adopt the following convention.
We will use $C,\stdFaceC$ for convex sets and their faces, respectively. $\stdCone, \stdFace$ will be used for convex cones and their faces, respectively. We denote by $\S^n$ the space of $n\times n$ real symmetric matrices and by $\PSDcone{n}$ the cone of $n\times n$ real symmetric positive semidefinite matrices.

\subsection{On faces of convex sets}
Here, we collect a few results and facts on faces of convex sets 
that will be useful in later sections.
First we recall that a closed convex set $\stdFaceC$ contained in $C$ is said to be a \emph{face} if  whenever $x,y \in C$ are such that 
$ \alpha x + (1-\alpha) y \in \stdFaceC $ for some $\alpha \in (0,1)$, we have $x,y \in \stdFaceC$.
In this case, we write $\stdFaceC \face C$. Faces consisting of a single point are called \emph{extreme points} and the set of extreme points of $C$ will be denoted by $\ext C$.
A face $\stdFaceC \face C$ is said to be \emph{proper} if $\stdFaceC \neq C$.
Given some convex subset $S \subseteq C$ we denote 
by $\minFaceset{S}{C}$ the \emph{minimal face of $C$ containing $S$}. For $\stdFaceC \face C$, we have the following characterization of the minimal face:
\begin{equation}\label{eq:min_face}
\stdFaceC = \minFaceset{S}{C} \quad \Longleftrightarrow \quad  \reInt(S) \cap \reInt (\stdFaceC) \neq \emptyset \quad \Longleftrightarrow \quad \reInt(S) \subseteq \reInt \stdFaceC,
\end{equation}
i.e., the minimal face of $C$ containing $S$ is the unique face such that the relative interior of $S$ intersects the relative interior of $\stdFaceC$. For 
the first implication, see \cite[Proposition~3.2.2]{Pa00}. The second implication 
follows because $\reInt (S) = \reInt( S \cap \stdFaceC) =  \reInt(S) \cap \reInt (\stdFaceC) \subseteq \reInt (\stdFaceC)$ holds when $\reInt(S) \cap \reInt (\stdFaceC) \neq \emptyset$ and $S \subseteq \stdFaceC$ (see \cite[Theorem~6.5]{Ro97}).

A face $\stdFaceC \face C$ is said to be \emph{facially exposed} if there exists a supporting hyperplane $H$ of $C$ such that $\stdFaceC  = C \cap H$. The following result on exposed faces is 
well-known but we give a short proof, see also 
\cite[Lemma~2.3]{BM94} for a related result.
\begin{proposition}[Every proper face is contained in some proper exposed face]\label{prop:exp_face}
Let $\stdFaceC \face C$ be such that $\stdFaceC \neq C$.
Then, there exists an exposed face $\stdFaceC' \face C$ satisfying $\stdFaceC \subseteq \stdFaceC'$ and $\stdFaceC' \neq C$.
\end{proposition}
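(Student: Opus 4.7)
The plan is to produce the desired exposed face by supporting $C$ at an interior point of $F$. I would first pick some $x \in \reInt \stdFaceC$ (handling the case $\stdFaceC = \emptyset$ separately by picking any point in the relative boundary of $C$, which exists unless $C$ is a singleton, in which case the only proper face is $\emptyset$ itself and is trivially exposed by a hyperplane missing $C$). Using the characterization \eqref{eq:min_face}, $\stdFaceC$ is the minimal face of $C$ containing $\{x\}$. Since $\stdFaceC \neq C$, the point $x$ cannot belong to $\reInt C$, so $x$ lies in the relative boundary of $C$.

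Next, I would invoke the supporting hyperplane theorem: at a relative boundary point $x$ of the convex set $C$ there exists a hyperplane $H \subseteq \ambSpace$ with $x \in H$, $C$ contained in one of the closed halfspaces determined by $H$, and $C \not\subseteq H$. Concretely, one separates $x$ from $\reInt C$ within $\aff C$ and then extends the separating hyperplane to all of $\ambSpace$. Define
\[
\stdFaceC' \coloneqq C \cap H.
\]
By construction $\stdFaceC'$ is an exposed face of $C$, and since $C \not\subseteq H$ we have $\stdFaceC' \neq C$, giving the required proper exposed face.

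Finally, I would verify $\stdFaceC \subseteq \stdFaceC'$. The point $x$ lies in $\reInt \stdFaceC \cap \stdFaceC'$, so for any $y \in \stdFaceC$ the definition of the relative interior furnishes some $z \in \stdFaceC$ with $x \in (y,z)$. Because $\stdFaceC'$ is a face of $C$ and the interior point $x$ of the segment $[y,z] \subseteq C$ belongs to $\stdFaceC'$, both endpoints $y$ and $z$ must lie in $\stdFaceC'$. Hence $\stdFaceC \subseteq \stdFaceC'$, as required.

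The main obstacle is the middle step: one has to produce a supporting hyperplane of $C$ in $\ambSpace$ that passes through $x$ but does not contain $C$, which is subtle when $\aff C$ is a proper affine subspace of $\ambSpace$. The trick is to do the separation inside $\aff C$ (where $\reInt C$ is a genuine open convex set and $x$ is a boundary point) and then extend the resulting hyperplane to $\ambSpace$, absorbing any component orthogonal to $\aff C$ into the supporting halfspace without changing its intersection with $C$.
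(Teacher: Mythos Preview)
Your argument is correct and follows essentially the same idea as the paper's proof: both exploit that $\reInt\stdFaceC\cap\reInt C=\emptyset$ to produce a supporting hyperplane $H$ with $C\not\subseteq H$ and then set $\stdFaceC'=C\cap H$. The only difference is packaging: the paper invokes proper separation of the \emph{sets} $\stdFaceC$ and $C$ (\cite[Theorem~11.3]{Ro97}), which immediately yields $\stdFaceC\subseteq H$, whereas you support $C$ at a single point $x\in\reInt\stdFaceC$ and then recover $\stdFaceC\subseteq\stdFaceC'$ via the face property---a slightly longer but equally valid route.
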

\begin{proof}
	Because $\stdFaceC \neq C$, we must have $\reInt (\stdFaceC) \cap \reInt (C) = \emptyset$ (see, e.g.,~\cite[Corollary~18.1.2]{Ro97}).
Then $\stdFaceC$ and $C$ can be \emph{properly separated}, i.e., there exists a hyperplane $H$ such that $\stdFaceC$ and $C$ belong to opposite closed half-spaces defined by $H$ and at least one among $\stdFaceC$ and $C$ is not entirely contained in $H$ (see \cite[Theorem~11.3]{Ro97}). Because $\stdFaceC \subseteq C$, it must be the case that $\stdFaceC \subseteq H$ and that $H$ is a supporting hyperplane of $C$.
Since the separation is proper, there exists at least one point of $C$ not in $H$. 
Therefore, the exposed face $\stdFaceC' \coloneqq C \cap H$ satisfies $\stdFaceC' \neq C$ and $\stdFaceC \subseteq \stdFaceC'$.
\end{proof}

The next proposition is contained in the results of Section~IV of \cite{Du62}, but for self-containment sake, we give a short argument.
\begin{proposition}\label{prop:face_intersection}
Let $C_1,C_2$ be closed convex sets such that 
$C \coloneqq C_1 \cap C_2$ is non-empty.
Let $\stdFaceC \face C$. Then, there are 
$\stdFaceC_1 \face C_1$, $\stdFaceC_2 \face C_2$ such that \[
\stdFaceC = \stdFaceC_1 \cap \stdFaceC_2,\qquad 
\reInt(\stdFaceC) = \reInt(\stdFaceC_1) \cap \reInt (\stdFaceC_2).
\]
\end{proposition}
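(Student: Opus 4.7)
The plan is to use the characterization of the minimal face in \eqref{eq:min_face} applied to a witness point in $\reInt F$. Specifically, I would pick any $x \in \reInt F$ (which is non-empty since $F$ is non-empty and convex) and set $F_i \coloneqq \minFaceset{\{x\}}{C_i}$ for $i=1,2$. Applying \eqref{eq:min_face} to the singleton $S = \{x\}$ gives $\{x\} = \reInt(\{x\}) \subseteq \reInt(F_i)$, so $x \in \reInt F_1 \cap \reInt F_2$.

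Next, I would verify that $F_1 \cap F_2$ is a face of $C = C_1 \cap C_2$: if $\alpha y + (1-\alpha)z \in F_1 \cap F_2$ for some $\alpha \in (0,1)$ and $y,z \in C$, then $y,z \in C_1$ forces $y,z \in F_1$ by the face property of $F_1 \face C_1$, and analogously $y,z \in F_2$. Combined with $x \in \reInt F_1 \cap \reInt F_2$, Rockafellar's formula for the relative interior of an intersection \cite[Theorem~6.5]{Ro97} gives $\reInt(F_1 \cap F_2) = \reInt F_1 \cap \reInt F_2$, and hence $x \in \reInt(F_1 \cap F_2)$.

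At this point both $F$ and $F_1 \cap F_2$ are faces of $C$ whose relative interiors contain the common point $x$, so by the uniqueness clause in \eqref{eq:min_face} (both equal the minimal face of $C$ containing $\{x\}$), we conclude $F = F_1 \cap F_2$. The inclusion $\reInt F \subseteq \reInt F_1 \cap \reInt F_2$ then follows immediately (in fact as equality) from the Rockafellar identity used in the previous step.

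The main technical point — and really the only one that isn't immediate — is the verification that $F_1 \cap F_2$ is a face of $C$; everything else is an application of \eqref{eq:min_face}. No obstacle of substance is expected since $F_i \subseteq C_i$ makes the face-of-intersection verification a one-line check from the definition of a face.
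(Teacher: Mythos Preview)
Your proof is correct and follows essentially the same approach as the paper: the paper takes $F_i \coloneqq \minFaceset{F}{C_i}$ rather than $\minFaceset{\{x\}}{C_i}$ for a point $x \in \reInt F$, but since $x \in \reInt F$ these minimal faces coincide, and thereafter both arguments invoke \cite[Theorem~6.5]{Ro97} and the uniqueness in \eqref{eq:min_face} in exactly the same way. Your version is slightly more explicit in verifying that $F_1 \cap F_2$ is a face of $C$, which the paper states without elaboration.
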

\begin{proof}
Let $\stdFaceC_1 \coloneqq \minFaceset{F}{C_1}$ and $\stdFaceC_2 \coloneqq \minFaceset{F}{C_2}$. By \eqref{eq:min_face}, we have
\[
\reInt(F) \subseteq \reInt (\stdFaceC_1) \cap \reInt (\stdFaceC_2).
\] 
In particular, $\stdFaceC_1$ and $\stdFaceC_2$ have a relative interior point in common, so, we have
$\reInt(\stdFaceC_1) \cap \reInt (\stdFaceC_2) = \reInt(\stdFaceC_1\cap \stdFaceC_2)$, see \cite[Theorem~6.5]{Ro97}.
Therefore,
\begin{equation}\label{eq:min_face_aux}
\reInt(F) \subseteq \reInt (\stdFaceC_1\cap \stdFaceC_2) =  \reInt (\stdFaceC_1) \cap \reInt (\stdFaceC_2)
\end{equation}
Because $\stdFaceC_1 \face C_1$ and 
$\stdFaceC_2 \face C_2$, we have 
that $\stdFaceC_1 \cap \stdFaceC_2$ is a face of $C$.
Since $\stdFaceC$ is also a face of $C$, \eqref{eq:min_face_aux} implies that $\stdFaceC = \stdFaceC_1 \cap \stdFaceC_2$.
\end{proof}

\subsection{Cones and notions of facial exposedness}
First, we recall that a closed convex cone $\stdCone \subseteq \ambSpace$ is said to be \emph{pointed} if 
$\lineality \stdCone = \{0\}$ and \emph{full-dimensional} if $\dim{\stdCone} = \dim \ambSpace$. A face $\stdFace \face \stdCone$ such that
$\dim \stdFace = 1$ is called an \emph{extreme ray}. 
If $\stdFace = \{\alpha x \mid \alpha \geq 0 \}$ we say that 
$\stdFace$ is \emph{generated by $x$}.

Here, we recall some properties stronger than facial exposedness for cones. 
We say that a cone $\stdCone$ is \emph{nice} (or \emph{facially dual complete}) if for every face $\stdFace \face \stdCone$ we have 
\[
\stdFace^* = \stdCone^* + \stdFace^\perp
\]
where $\stdCone^*$ is the dual cone of $\stdCone$, consisting of all linear functionals on $\ambSpace$
that take nonnegative values on $\stdCone$. Equivalently, we have that $\stdCone^* + \stdFace^\perp$ is closed for all 
$\stdFace \face \stdCone$.
%
%
A face $\stdFace \face \stdCone$ is said to \emph{projectionally exposed} if there exists an 
idempotent linear map $\mathcal{P}: \ambSpace \to \ambSpace$ (i.e., a linear projection that is not necessarily orthogonal) such that
\[
\mathcal{P} (\stdCone) = \stdFace.
\]
$\stdCone$ is said to be projectionally exposed if  every face is projectionally exposed.

Finally, $\stdCone$ is said to be \emph{amenable}, if for every face $\stdFace \face \stdCone$ there exists a constant $\kappa  > 0$ (possibly depending on $\stdFace$) such that
\begin{equation}\label{eq:def_am_cone}
\dist(x, \stdFace) \leq \kappa \dist(x, \stdCone), \quad \forall x \in \lspan \stdFace.
\end{equation}
Gathering several results in the literature we have the following.

\begin{proposition}[Notions of exposedness]\label{prop:imp}
	Consider the following statements.
	\begin{enumerate}[$(i)$]
		\item $\stdCone$ is projectionally exposed.
		\item $\stdCone$ is amenable.
		\item $\stdCone$ is nice
		\item $\stdCone$ is facially exposed.
	\end{enumerate}
	Then $(i)\Rightarrow (ii) \Rightarrow (iii)\Rightarrow (iv)$. If $\dim{\stdCone} \leq 3$ then $(iv) \Rightarrow (i)$.
\end{proposition}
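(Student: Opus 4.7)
The plan is to verify the three forward implications $(i) \Rightarrow (ii) \Rightarrow (iii) \Rightarrow (iv)$ and then separately address the reverse implication $(iv) \Rightarrow (i)$ under the dimension restriction. Since the proposition explicitly says it is gathering known results, most of the work will be in citing the right references; the only implication that I think deserves a short direct argument is $(i) \Rightarrow (ii)$.

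For $(i) \Rightarrow (ii)$, fix a face $\stdFace \face \stdCone$ and let $\mathcal{P}:\ambSpace \to \ambSpace$ be an idempotent linear map with $\mathcal{P}(\stdCone) = \stdFace$. The range of $\mathcal{P}$ is a linear subspace containing $\stdFace$, hence it contains $\lspan \stdFace$, and since $\mathcal{P}$ is the identity on its range, $\mathcal{P}(x)=x$ for every $x \in \lspan \stdFace$. For any such $x$ and any $y \in \stdCone$ we have $\mathcal{P}(y) \in \stdFace$, so
\[
\dist(x,\stdFace) \leq \norm{x - \mathcal{P}(y)} = \norm{\mathcal{P}(x-y)} \leq \norm{\mathcal{P}}\,\norm{x-y}.
\]
Taking the infimum over $y \in \stdCone$ gives $\dist(x,\stdFace) \leq \norm{\mathcal{P}}\,\dist(x,\stdCone)$, which is the amenability inequality \eqref{eq:def_am_cone} with $\kappa = \norm{\mathcal{P}}$ (depending on $\stdFace$). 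Since every face is projectionally exposed by hypothesis, this shows $\stdCone$ is amenable.

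The remaining forward implications are quoted from the literature: $(ii) \Rightarrow (iii)$ is proved in \cite{L19}, where it is shown that amenable cones are facially dual complete, and $(iii) \Rightarrow (iv)$ is the result of Pataki from \cite{Pa13_2} that nice cones are facially exposed. For the reverse direction $(iv) \Rightarrow (i)$ under $\dim \stdCone \leq 3$, the plan is to cite the known fact that facially exposed cones of dimension at most three are projectionally exposed, going back to the analysis of facial reduction in low dimensions; this is established in \cite{ST90} (see also \cite{BLP87}).

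The only step that could conceivably cause trouble is matching the precise statement of each cited implication to the present formulation (in particular, confirming that the low-dimensional $(iv) \Rightarrow (i)$ result in \cite{ST90} is stated for all, not only pointed or full-dimensional, closed convex cones). I expect this to be routine, so no obstacle beyond bookkeeping is anticipated.
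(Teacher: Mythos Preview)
Your approach is essentially the same as the paper's: both proofs dispatch the implications by citing the relevant literature. Your direct argument for $(i)\Rightarrow(ii)$ is correct and in fact reproduces the content of \cite[Proposition~9]{L19}, which the paper simply cites together with \cite[Proposition~13]{L19} for $(ii)\Rightarrow(iii)$.

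The one point to correct is the reference for the low-dimensional implication $(iv)\Rightarrow(i)$: the paper attributes this to Poole and Laidacker \cite[Theorem~3.2]{PL88}, not to \cite{ST90} or \cite{BLP87}. Your own caveat about matching the precise cited statement is well placed here; the result for $\dim\stdCone\leq 3$ is in \cite{PL88}, while \cite{ST90} is used elsewhere in the paper for different facts about projectional exposedness (transitivity, inheritance, and the codimension-one criterion). Similarly, for $(iii)\Rightarrow(iv)$ the paper's proof cites \cite[Theorem~3]{pataki} rather than \cite{Pa13_2}, though the introduction does mention \cite{Pa13_2} for the same fact.
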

\begin{proof}
The implication 	$(i)\Rightarrow (ii) \Rightarrow (iii)$ follows from Propositions~9 and 13 in \cite{L19}.
The implication $(iii) \Rightarrow (iv)$  comes from \cite[Theorem~3]{pataki}.

Finally,  Poole and Laidacker proved that when 
 $\dim{\stdCone} \leq 3$, facial exposedness implies projectional exposedness \cite[Theorem~3.2]{PL88}.
\end{proof}

To conclude this subsection, we comment briefly on some applications of the notion of amenability. In \cite{L19}, the author describes how to compute {error bounds} for amenable cones. This computation relies on obtaining the so-called \emph{facial residual functions} (FRFs) and combining FRFs with the facial reduction algorithm \cite{BW81}, see also \cite{LLP20}. 

Error bounds themselves are important tools for the analysis of optimization problems \cite{Pang97}. In particular, the behavior of several algorithms can be described by the kind of error bound that holds between the underlying sets see, for example, \cite{BLT17}. For a discussion on convergence analysis of algorithms in the context of amenable cones and connections to the notion of \emph{singularity degree}, see \cite{LL20}.

Finally, as we show that certain classes of cones are amenable, (non-)amenability then becomes a reasonable criterion for proving that a given cone \emph{does not} belong to some target class. Because amenability implies facial exposedness and niceness, non-amenability is more likely to work as a witness of non-membership. 
For example, we will show in Corollary~\ref{cor:am} that spectrahedral sets are amenable. In particular, our example of a nice but not amenable cone described in Section~\ref{sec:nicenotamen} is not spectrahedral. We believe this would be nontrivial to establish using other methods.
\subsection{On bounded linear regularity}
We say that convex sets $C_1,\ldots, C_{m} \subseteq \ambSpace$ 
satisfy \emph{bounded linear regularity} if 
their intersection $C \coloneqq \bigcap _{i=1}^m C_i$ is nonempty and 
the following error bound condition holds: for every bounded set $B \subseteq \ambSpace$, there exists $\kappa_B > 0$ such that 
\begin{equation}\label{eq:blr}
\dist(x, C) \leq \kappa_B \max _{1 \leq i \leq m} \dist(x,C_i), \qquad \forall x \in B.
\end{equation}
Bounded linear regularity coincides with the notion of bounded $1$-H\"older regularity, see, for example, \cite[Definition~2.2]{BLT17} and the comments afterwards.
In the next sections we will need the following result, see \cite[Corollary~3]{BBL99} for a proof.
\begin{proposition}\label{prop:blr}
Let	$C_1,\ldots, C_{m} \subseteq \ambSpace$ 
be such that  $C_1,\ldots, C_{k}$ are polyhedral sets 
and
\[
\left(\bigcap_{i = 1}^{k} C_i\right) \bigcap \left(\bigcap_{j = k+1}^{m} \reInt C_j\right) \neq \emptyset.
\]	
holds. Then, $C_1,\ldots, C_{m}$ satisfy 
bounded linear regularity.
\end{proposition}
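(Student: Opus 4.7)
The plan is to combine Hoffman's error bound for the polyhedral constraints with a Slater-type ``interior push'' argument for the remaining convex sets. First I would collapse the polyhedral part into a single constraint: by Hoffman's lemma, because $C_1, \ldots, C_k$ are polyhedral, there exists a global constant $\kappa_0 > 0$ (independent of any bounded set) with
\[
\dist\!\left(x, \bigcap_{i=1}^k C_i\right) \leq \kappa_0 \max_{1 \leq i \leq k} \dist(x, C_i), \qquad \forall x \in \ambSpace.
\]
Setting $P \coloneqq \bigcap_{i=1}^k C_i$, this reduces the statement to proving bounded linear regularity for the smaller family $\{P, C_{k+1}, \ldots, C_m\}$ under the hypothesis that there exists $x_0 \in P$ lying in $\reInt C_j$ for every $j > k$, since the polyhedral distances feed back into $\dist(x, P)$ through $\kappa_0$.

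For the reduced problem, fix a bounded set $B$ and argue by contradiction. Suppose there exist $x_n \in B$ with $\dist(x_n, C) / e_n \to \infty$, where $C \coloneqq P \cap \bigcap_{j>k} C_j$ and $e_n \coloneqq \max\{\dist(x_n, P), \dist(x_n, C_{k+1}), \ldots, \dist(x_n, C_m)\}$. By compactness we may pass to a subsequence with $x_n \to \bar x$; since $e_n \to 0$, we have $\bar x \in C$. Let $y_n$ denote the metric projection of $x_n$ onto $P$, so $\|x_n - y_n\| \leq e_n$ and $y_n \in P$. Consider the segment $z_n(t) \coloneqq (1-t)y_n + t x_0$, which lies in $P$ for all $t \in [0,1]$ by convexity of $P$. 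A standard convex-geometry argument exploiting that $x_0 \in \reInt C_j$ has some positive ``inner radius'' $r_j > 0$ in $\aff C_j$ shows that $z_n(t) \in C_j$ whenever $t \geq c_j \dist(y_n, C_j)$ for a constant $c_j$ depending only on $x_0$ and $C_j$. Taking $t_n$ equal to the maximum of these lower bounds over $j > k$, we obtain $z_n(t_n) \in C$ with $\|x_n - z_n(t_n)\|$ bounded by a constant multiple of $e_n$ (using $\|x_n - y_n\| \leq e_n$ and $\|y_n - x_0\|$ bounded, since $y_n$ stays bounded), which contradicts the assumed divergence.

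The main difficulty is the quantitative ``interior push'': establishing that a single scalar $t_n$, linear in $e_n$, simultaneously places $z_n(t_n)$ inside every non-polyhedral $C_j$. This step hinges on two ingredients: first, the positive inner radius at $x_0$ into each $\reInt C_j$, which yields the constants $c_j$ via a similar-triangles estimate decomposing $y_n - x_0$ into its component along $\aff C_j$ and the orthogonal error at most $\dist(y_n, C_j) \leq e_n$; and second, polyhedrality of $P$, which lets us pay only $\dist(x_n, P) \leq e_n$ to reach $y_n$ and guarantees that the entire segment from $y_n$ to $x_0$ remains feasible for the polyhedral part. The resulting constant $\kappa_B$ depends on $B$, on $r_j$, and on $\|x_0\|$, as expected.
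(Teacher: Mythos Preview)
The paper does not prove this proposition; it simply cites \cite[Corollary~3]{BBL99}. So there is no in-paper argument to compare against, and your outline must stand on its own.

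Your overall strategy---collapse the polyhedral constraints via Hoffman's lemma, then push toward the Slater point $x_0$ along a segment---is the standard one and is essentially how the result is established in the cited reference. However, the ``interior push'' step as written has a real gap. You assert that $z_n(t) = (1-t)y_n + t x_0 \in C_j$ once $t \geq c_j\,\dist(y_n,C_j)$. This fails when $C_j$ is not full-dimensional, and the hypothesis uses $\reInt C_j$, not $\interior C_j$, precisely to allow that case. If $y_n \notin \aff C_j$, then $z_n(t) \notin \aff C_j$ for every $t<1$, so $z_n(t)\notin C_j$ no matter how large $t$ is short of $1$. Your remark about decomposing $y_n-x_0$ into components parallel and orthogonal to $\aff C_j$ correctly bounds the orthogonal piece by $\dist(y_n,C_j)$, but moving along the segment only scales that piece by $(1-t)$; it never kills it. A two-line example: $P=\Re^2$, $C_2=\{(s,0):s\geq 0\}$, $x_0=(1,0)$, $y_n=(0,1/n)$; then $z_n(t)=(t,(1-t)/n)\in C_2$ only at $t=1$.

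The fix is to absorb the affine hulls into the polyhedral block before projecting. Replace $P$ by
\[
P' \coloneqq P \cap \bigcap_{j>k}\aff C_j,
\]
which is still polyhedral and still contains $x_0$. Hoffman's bound applied to this enlarged polyhedral system gives $\dist(x_n,P')\leq \kappa_0'\max\bigl\{\dist(x_n,C_i),\dist(x_n,\aff C_j)\bigr\}\leq \kappa_0' e_n$, since $\dist(x_n,\aff C_j)\leq \dist(x_n,C_j)$. Now let $y_n$ be the projection of $x_n$ onto $P'$; then $y_n\in\aff C_j$ for every $j>k$, the whole segment to $x_0$ stays in each $\aff C_j$, and your inner-radius argument goes through. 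With this correction the rest of your outline (boundedness of $\|y_n-x_0\|$, taking $t_n$ to be the maximum threshold, the resulting linear-in-$e_n$ bound on $\|x_n-z_n(t_n)\|$) is sound.
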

We mention in passing that other sufficient criteria for bounded linear regularity can be seen in \cite{BBL99} and in \cite[Theorem~7]{BD05}.

\section{Amenable convex sets  and their basic properties}\label{sec:amenable}
Amenability was originally defined for cones only, as in \eqref{eq:def_am_cone}.
Our first task is to extend this definition to arbitrary convex sets. There are two main motivations for that. The first is that the facial structure of convex sets is also an important subject on its own. The second is that when analyzing the properties of a convex cone,  it can be more convenient to analyze its \emph{slices} first, because they are lower dimensional objects. In fact, in Section~\ref{sec:slices} we will show that the amenability of a pointed closed convex cone is equivalent to the amenability of its slices, see Proposition~\ref{prop:slices} and Theorem~\ref{thm:slice}.

\subsection{Definition of amenability for general convex sets}\label{ssec:defbounded}

Let $C$ be an arbitrary convex set and let $\stdFaceC \face C$ be a face.
With that, we have
\[
\stdFaceC = C \cap \aff \stdFaceC.
\] 
A first attempt at extending amenability \eqref{eq:def_am_cone} to general closed convex sets would be to require the existence 
of some $\kappa > 0$ such that 
\begin{equation}\label{eq:def_am_c_bad}
\dist(x, \stdFaceC) \leq \kappa \dist(x, C), \quad \forall x \in \aff \stdFaceC.
\end{equation}
Unfortunately, this is unlikely to hold for many reasonable sets, as we will 
see in Example~\ref{ex:am_bd}. The key is to restrict the validity of  \eqref{eq:def_am_c_bad} to bounded sets 
as follows.

\begin{definition}[Amenable faces and amenable sets]\label{def:am}
	Let $C$ be a closed convex set and $\stdFaceC \face C$ be a face.  $\stdFaceC$ is said to be \emph{amenable} if 
	for every bounded set $B$, there exists a constant $\kappa > 0$ (possibly depending on $F$ and $B$) such that 
	\begin{equation}\label{eq:am_def}
	\dist(x, \stdFaceC) \leq \kappa \dist(x, C), \quad \forall x \in (\aff \stdFaceC )\cap B.
	\end{equation}
	If all faces of $C$ are amenable, then $C$ is said to be an \emph{amenable convex set}.
\end{definition}
Next, we advance the case that Definition~\ref{def:am} is reasonable by presenting a few equivalences. Recall that $C_1$ and $C_2$ are \emph{subtransversal at $x^* \in C_1\cap C_2$} \cite[Definition~7.5]{ioffe} if there is 
a neighbourhood $U$ of $x^*$ and $\kappa > 0$ such that 
\begin{equation}\label{eq:am_sub}
\dist(x, C_1 \cap C_2) \leq \kappa (\dist(x, C_1)+\dist(x,C_2)), \quad \forall x \in U. 
\end{equation}

\begin{proposition}\label{prop:am_eq}
	Let $C$ be a convex set and let $\stdFaceC \face C$ be a face. The following are equivalent:
	\begin{enumerate}[$(i)$]
		\item $\stdFaceC$ is an amenable face of $C$. \item $C$ and $\aff \stdFaceC$ are boundedly linearly regular, i.e., for every bounded set $B$ there exists $\kappa _B > 0$ such that 
		\begin{equation}\label{eq:am_blr}
		\dist(x,\stdFaceC) \leq \kappa _B \max \{\dist(x,\aff \stdFaceC), \dist(x,C) \},\qquad \forall x \in B.
		\end{equation}
		\item $C$ and $\aff \stdFaceC$ are subtransversal  at every point of $\stdFaceC$.
	\end{enumerate} 
\end{proposition}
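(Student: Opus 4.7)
The plan is to prove $(i) \Leftrightarrow (ii)$ via a projection argument, derive $(ii) \Rightarrow (iii)$ directly, and obtain $(iii) \Rightarrow (ii)$ through a compactness argument, which is the main obstacle. Throughout, the key standing identity is $\stdFaceC = C \cap \aff \stdFaceC$.

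For $(ii) \Rightarrow (i)$, restricting \eqref{eq:am_blr} to $x \in (\aff \stdFaceC) \cap B$ forces $\dist(x, \aff \stdFaceC) = 0$, and the inequality collapses to \eqref{eq:am_def}. For the converse $(i) \Rightarrow (ii)$, fix a bounded set $B$ and $x \in B$, and let $y$ denote the orthogonal projection of $x$ onto $\aff \stdFaceC$. Since orthogonal projection onto an affine subspace is $1$-Lipschitz, $y$ ranges over a bounded set $B'$ as $x$ varies over $B$, so amenability supplies $\kappa$ with $\dist(y, \stdFaceC) \leq \kappa \dist(y, C)$. Combining this with $\dist(y, C) \leq \|x-y\| + \dist(x, C)$, the triangle inequality $\dist(x, \stdFaceC) \leq \|x-y\| + \dist(y, \stdFaceC)$, and $\|x-y\| = \dist(x, \aff \stdFaceC)$ yields \eqref{eq:am_blr} with constant $1 + 2\kappa$.

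The implication $(ii) \Rightarrow (iii)$ follows by taking $B$ to be a bounded neighborhood of a given point of $\stdFaceC$ and using $\max\{a,b\} \leq a + b$ for $a, b \geq 0$ to convert \eqref{eq:am_blr} into \eqref{eq:am_sub}.

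The central step is $(iii) \Rightarrow (ii)$. Fix a bounded set $B$; its closure $\bar B$ is compact, and since $\stdFaceC$ is closed, so is $\bar B \cap \stdFaceC$, which is therefore compact. For each $x^* \in \bar B \cap \stdFaceC$, subtransversality furnishes an open neighborhood $U_{x^*}$ and a constant $\kappa_{x^*}$ giving, via $\dist(\cdot, \aff \stdFaceC) + \dist(\cdot, C) \leq 2 \max\{\dist(\cdot, \aff \stdFaceC), \dist(\cdot, C)\}$, a BLR-type estimate on $U_{x^*}$. Extract a finite subcover $U_1, \dots, U_N$ of $\bar B \cap \stdFaceC$. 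The residual set $K \coloneqq \bar B \setminus \bigcup_i U_i$ is compact and disjoint from $\stdFaceC$. On $K$, the continuous function $g(x) \coloneqq \max\{\dist(x, \aff \stdFaceC), \dist(x, C)\}$ does not vanish, because $g(x) = 0$ would force $x \in (\aff \stdFaceC) \cap C = \stdFaceC$; consequently $g$ attains a positive minimum $\delta$. Since $f(x) \coloneqq \dist(x, \stdFaceC)$ is bounded above by some $M$ on $\bar B$, we obtain $f \leq (M/\delta)\,g$ on $K$. Combining this with the bound on $\bigcup_i U_i$ produces a uniform constant $\kappa_B$ valid on all of $B$. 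The main obstacle lies in this gluing: one has to patch together the local subtransversality bounds near $\stdFaceC$ with a global continuity bound on the portion of $\bar B$ away from $\stdFaceC$, and the identification of $g^{-1}(0)$ with $\stdFaceC$ is precisely what makes the gluing go through.
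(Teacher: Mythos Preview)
Your proof is correct and follows the same overall structure as the paper. The implications $(i)\Leftrightarrow(ii)$ and $(ii)\Rightarrow(iii)$ are handled identically (up to minor bookkeeping in the constants). For $(iii)\Rightarrow(ii)$ there is a small but genuine difference: the paper covers \emph{all} of $\bar B$ by neighborhoods $U_x$ on which the subtransversality-type bound holds, extracts a finite subcover by compactness of $\bar B$, and takes the maximum of the finitely many constants. This tacitly uses that at any $x^*\in\bar B\setminus F$ the inequality holds automatically on a small neighborhood (since $\dist(\cdot,C)+\dist(\cdot,\aff F)$ is bounded away from zero there while $\dist(\cdot,F)$ is bounded), a point the paper does not spell out. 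Your version makes this step explicit by covering only the compact set $\bar B\cap F$ with subtransversality neighborhoods and then treating the compact residual $K=\bar B\setminus\bigcup_i U_i$ separately via the positive minimum of $g$. Both routes yield the same conclusion; yours is slightly more careful, the paper's slightly more concise.
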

\begin{proof}
	First, we observe that \eqref{eq:am_def} is implied by \eqref{eq:am_blr}, when 
	$C_1 = C$, $C_2 = \aff \stdFaceC$ and $x \in \aff \stdFaceC$. Therefore
	\fbox{$(i) \Leftarrow (ii)$} holds.
	
	We move on to proving that \fbox{$(i) \Rightarrow (ii)$}. 
	Suppose $\stdFaceC$ is amenable, let $B$ be an arbitrary bounded set and let $P$ denote the  projection operator onto $\aff \stdFaceC$, i.e., $P(x) = \arg\min _{y\in \aff\stdFaceC}\norm{x-y}$ holds for every $x$.
	Since $P$ is nonexpansive, $P(B)$ must be bounded as well. 
	By the definition of amenability, there exists $\kappa$ such that 
	\begin{equation}\label{eq:equiv}
	\dist(z, \stdFaceC) \leq \kappa \dist(z, C), \quad \forall z \in (\aff \stdFaceC )\cap P(B).
	\end{equation}
	Given $x \in B$, using \eqref{eq:equiv} and the properties of the projection operator, we have 
	\begin{align*}
	\dist(x, \stdFaceC) &\leq \dist(x, \aff \stdFaceC) + \dist(P(x),\stdFaceC)\\
	&\leq \dist(x, \aff \stdFaceC) + \kappa \dist(P(x),C)\\
	&\leq \dist(x, \aff \stdFaceC) + \kappa (\dist(x,C) + \dist(x,\aff \stdFaceC))\\
	&\leq (\kappa+1)(\dist(x,C) + \dist(x,\aff \stdFaceC))\\
	&\leq 2(\kappa+1)\max \{\dist(x,C),\dist(x,\aff \stdFaceC)\}.
	\end{align*}
	This shows that $C$ and $\aff \stdFaceC$ are boundedly linearly regular.
	
	Next, we check that \fbox{$(ii) \Rightarrow (iii)$}. 
	Let $x^* \in \stdFaceC$ and let $U$ be any bounded neighbourhood of $x^*$. 
	Since $(ii)$ holds, there exists $\kappa > 0$ such that 
	\[
	\dist(x, \stdFaceC) \leq \kappa (\dist(x,C) + \dist(x,\aff \stdFaceC)), \quad \forall x \in U,
	\]
	which shows that $C$ and $\aff \stdFaceC$ are subtransversal  at $x^*$.
	
	Finally, we show that \fbox{$(ii) \Leftarrow (iii)$}. 
	Suppose that $\aff \stdFaceC$ and $C$ are subtransversal at every 
	$x \in \stdFaceC$. Let $B$ be a bounded set and denote by $\bar{{B}}$ its closure.
	For every $x \in \bar{{B}}$, there exists some open neighbourhood $U_x$ and a constant 
	$\kappa _x$ such that \eqref{eq:am_sub} holds. Since $\bar{{B}}$ is compact,
	and the $U_x$ form an open cover for  $\bar{{B}}$,
	there are finitely many $x_1,\ldots, x_\ell$ such that 
	\[
	\bar{{B}} \subseteq \bigcup _{i = 1}^{\ell} U_{x_i}.
	\]
	Therefore, if we set 
	\[
	\kappa = \max \{\kappa _{x_1},\ldots, \kappa_{x_\ell}\},
	\]
	then, for every $x \in B$, we have
	\[
	\dist(x, F)\leq \kappa (\dist(x,\aff \stdFaceC) + \dist(x,C)) \leq 2\kappa \max \{\dist(x,\aff \stdFaceC), \dist(x,C) \}.
	\]
\end{proof}
Let $\stdCone$ be a closed convex cone. In \cite[Proposition~12]{L19} it was shown that a face $\stdFace \face \stdCone$ satisfies \eqref{eq:def_am_cone} if and only if $\stdCone$ and $\lspan \stdFace$ are boundedly linearly regular. Since $\lspan \stdFace = \aff \stdFace$, in view of Proposition~\ref{prop:am_eq}, we conclude that $\stdCone$ is amenable as a cone (i.e., \eqref{eq:def_am_cone} is satisfied for every face) if and only if $\stdCone$ is amenable as a convex set (i.e., Definition~\ref{def:am} is satisfied).

Enforcing boundedness allows to prove closure of amenable sets under several common operations, such as intersections, direct products, linear transformations and lifts (see Proposition~\ref{prop:am_int} and Theorem~\ref{thm:slice}). The following example shows that boundedness is essential in Definition~\ref{def:am} when dealing with general convex sets.

\begin{example}[Boundedness is essential in the definition of amenability]\label{ex:am_bd}
In the definition of amenability, we require that \eqref{eq:am_def} holds
only when a bounded set $B$ is specified and $\kappa$ is allowed to change with $B$. Here, we show an example, based on Example~1 in \cite{Sturm00},  of an amenable convex set for 
which \eqref{eq:am_def} does not hold globally.
Let 
\begin{equation}\label{eq:boundedexample}
C \coloneqq \left\{\begin{pmatrix} x_{11} & x_{12} \\ x_{12} & x_{22} \end{pmatrix} \in \PSDcone{2} \mid x_{22} \geq 1 \right\}.
\end{equation}
The set $C$ is the intersection of an ice-cream cone and a half-space, shown in Fig.~\ref{fig:examplebounded}.
\begin{figure}[ht]
	\begin{center}
		\includegraphics[width=0.5\textwidth]{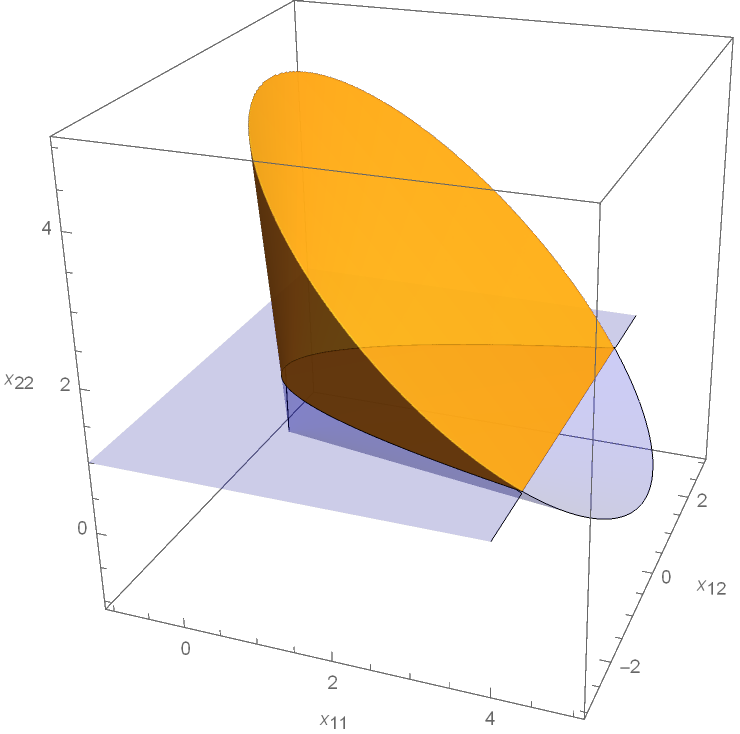}
	\end{center}
	\caption{A subset of the $2\times 2$ positive semidefinite cone described by \eqref{eq:boundedexample} which shows that  amenability must be considered locally, in general.}
	\label{fig:examplebounded}
\end{figure}
Second-order cones and half-spaces are amenable \cite{L19}, and it will be shown in Proposition~\ref{prop:am_int} that intersections of amenable sets are amenable, so $C$ is amenable.

Next we consider the following face of $C$
\[
\stdFaceC \coloneqq \left\{\begin{pmatrix} x_{11} & x_{12} \\ x_{12} & 1 \end{pmatrix} \in \PSDcone{2}  \right\}.
\]
$\stdFaceC$ is indeed a face of $C$ because it is obtained as an intersection of 
$C$ with the supporting hyperplane 
\[
H \coloneqq \left\{\begin{pmatrix} x_{11} & x_{12} \\ x_{12} & x_{22} \end{pmatrix} \in \S^2 \mid x_{22} = 1 \right\} = \{x \in \S^2 \mid \inProd{x}{d} = 1 \},
\]
where $d \in \PSDcone{2}$ is the matrix such that $d_{22} = 1$ and is zero elsewhere.
We note that the affine hull of $F$ is $H$.

Next,  we look at whether there could possibly exist some constant $\kappa > 0$  such that 
\begin{equation}\label{eq:bound_cd}
\dist(x, \stdFaceC) \leq \kappa \dist(x, C), \qquad \forall x \in \aff \stdFaceC,
\end{equation}
i.e., whether amenability could hold globally. For concreteness, we use the distance on $\S^2$ induced by the Frobenius norm. 
So suppose that \eqref{eq:bound_cd} holds. 
Similar to the sequence of inequalities in the proof that  $(i)\Rightarrow (ii)$ in Proposition~\ref{prop:am_eq}, we have 
\begin{align}
\dist(x, \stdFaceC) & \leq (\kappa+1)(\dist(x,C) + \dist(x,\aff \stdFaceC)),\quad \forall x \in \S^2. \label{eq:bound_cd2}
\end{align}
Next, we consider the following family of points indexed by $\epsilon > 0$:
\[
x^\epsilon \coloneqq \begin{pmatrix}
1/(\epsilon^2+\epsilon^3) & 1/\epsilon \\
1/\epsilon & 1 + \epsilon
\end{pmatrix}.
\]
We observe that $x^{\epsilon} \in C$, so $\dist(x^{\epsilon}, C) = 0$.
Furthermore $\dist(x^{\epsilon},\aff F) \leq \epsilon$.
Following essentially the same line of argument presented in Example~1 of \cite{Sturm00}, we will derive a contradiction as follows. Let $z^{\epsilon} =\arg \min _{z \in \stdFaceC} \norm{x^{\epsilon}-z}$ and let $y^{\epsilon} \coloneqq z^{\epsilon}-x^{\epsilon}$.
With that, we have $x^{\epsilon} + y^{\epsilon} \in F$  and \[\norm{y^{\epsilon}} = \dist(x^{\epsilon},F).\]
Using \eqref{eq:bound_cd2}, we also have 
\begin{equation}\label{eq:bound3}
\norm{y^{\epsilon}} = \dist(x^{\epsilon},F) \leq (\kappa+1)\epsilon.
\end{equation}
Since  $y^{\epsilon}_{22} + x^{\epsilon}_{22}  = 1$, we 
have $y^{\epsilon}_{22} = -\epsilon$. Since $x^{\epsilon} + y^{\epsilon}$ must be positive semidefinite, its determinant must be nonnegative so the following inequality must hold
\[
\frac{y^{\epsilon}_{11}(\epsilon^2+\epsilon^3) + 1}{\epsilon^2+\epsilon^3} - \frac{(1+\epsilon y^\epsilon_{12})^2}{\epsilon^2} \geq 0.
\]
Therefore,
\begin{align*}
y_{11}^\epsilon & \geq -\frac{1}{\epsilon^2+\epsilon^3} + \frac{1}{\epsilon^2} + \frac{2y_{12}^\epsilon}{\epsilon} + (y_{12}^{\epsilon})^2\\
& = \frac{1}{\epsilon(1+\epsilon)}  + \frac{2y_{12}^\epsilon}{\epsilon} + (y_{12}^{\epsilon})^2.
\end{align*}
By \eqref{eq:bound3}, $|y_{12}^\epsilon|$ is bounded above by $(\kappa+1)\epsilon$. We then have
\[
y_{11}^\epsilon \geq \frac{1}{\epsilon(1+\epsilon)}  - 2(\kappa+1). 
\]
As $\epsilon$ goes to $0$, $y_{11}^\epsilon$ goes to $+\infty$, which contradicts \eqref{eq:bound3}. We conclude that 
\eqref{eq:bound_cd} cannot possibly hold.
Therefore, although $C$ is amenable, the amenability of its faces must be considered locally. 
\end{example}

\subsection{Basic properties of amenable convex sets}
In this subsection we prove some basic properties of amenable convex sets.
\begin{proposition}[Properties of convex amenable sets]\label{prop:am_int}
	Let $C_1,C_2 \subseteq \ambSpace$ be  convex sets and $\hat \ambSpace$ a finite dimensional Euclidean space.
	\begin{enumerate}[$(i)$]
		\item \label{prop:am_int:1} If $C_1$ and $C_2$ are amenable then $C_1\cap C_2$ is amenable.
		\item \label{prop:am_int:2} If $C_1$ and $C_2$ are amenable then $C_1 \times C_2$ is amenable.
		\item \label{prop:am_int:3} If $A:\ambSpace \to \hat \ambSpace$ is an 
		injective affine map, then $A(C_1)$ is amenable if and only if $C_1$ is amenable.
		\item \label{prop:am_int:4} If $C_1$ is polyhedral, then it is amenable.
		\item \label{prop:am_int:5} $C_1$ is amenable if and only if $C_1 \cap (\lineality C_1)^\perp$ is amenable.		
	\end{enumerate}
\end{proposition}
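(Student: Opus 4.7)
The plan is to handle the five items in an order that maximises reuse: first (\ref{prop:am_int:4}), then (\ref{prop:am_int:2}) and (\ref{prop:am_int:3}), then (\ref{prop:am_int:1}), and finally (\ref{prop:am_int:5}). Throughout I would use the characterisation in Proposition~\ref{prop:am_eq}: amenability of $\stdFaceC \face C$ is equivalent to bounded linear regularity of the pair $\{C, \aff \stdFaceC\}$. For (\ref{prop:am_int:4}), if $C_1$ is polyhedral then so is $\aff \stdFaceC$, and the non-empty intersection of two polyhedral sets is boundedly linearly regular by Proposition~\ref{prop:blr} (taking $k=2$), which is amenability of $\stdFaceC$. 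For (\ref{prop:am_int:2}), I would first show from the defining property of a face that every face of $C_1 \times C_2$ is a product $\stdFaceC_1 \times \stdFaceC_2$ with $\stdFaceC_i \face C_i$; then $\aff(\stdFaceC_1 \times \stdFaceC_2) = \aff \stdFaceC_1 \times \aff \stdFaceC_2$, and the Pythagorean identity for the Euclidean product distance reduces the inequality to the coordinate cases. For (\ref{prop:am_int:3}), an injective affine map $A$ is bi-Lipschitz onto its image and induces a bijection $\stdFaceC \mapsto A(\stdFaceC)$ between the faces of $C_1$ and those of $A(C_1)$, with $\aff A(\stdFaceC) = A(\aff \stdFaceC)$; the amenability inequalities then transfer in both directions with the bi-Lipschitz constants absorbed into $\kappa$.

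The main work is in (\ref{prop:am_int:1}). Given $\stdFaceC \face C_1 \cap C_2$, Proposition~\ref{prop:face_intersection} produces $\stdFaceC_i \face C_i$ with $\stdFaceC = \stdFaceC_1 \cap \stdFaceC_2$ and $\reInt \stdFaceC \subseteq \reInt \stdFaceC_1 \cap \reInt \stdFaceC_2$, so in particular $\reInt \stdFaceC_1 \cap \reInt \stdFaceC_2 \neq \emptyset$. Proposition~\ref{prop:blr} applied with $k = 0$ then yields bounded linear regularity of the pair $\{\stdFaceC_1, \stdFaceC_2\}$, i.e., on every bounded set $B$,
\[
\dist(x, \stdFaceC) = \dist(x, \stdFaceC_1 \cap \stdFaceC_2) \leq \kappa_B \max\{\dist(x, \stdFaceC_1), \dist(x, \stdFaceC_2)\}.
\]
Restricting to $x \in \aff \stdFaceC \cap B$, the inclusion $\aff \stdFaceC \subseteq \aff \stdFaceC_i$ (from $\stdFaceC \subseteq \stdFaceC_i$) makes $x$ a valid input for the amenability of $\stdFaceC_i$ in $C_i$, giving $\dist(x, \stdFaceC_i) \leq \kappa_i \dist(x, C_i)$; combined with $\dist(x, C_i) \leq \dist(x, C_1 \cap C_2)$, this chains to the desired amenability of $\stdFaceC$ in $C_1 \cap C_2$.

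For (\ref{prop:am_int:5}), set $L = \lineality C_1$ and use the orthogonal decomposition $C_1 = (C_1 \cap L^\perp) + L$, under which the sum map $(x, \ell) \mapsto x + \ell$ is an injective affine isometry from $(C_1 \cap L^\perp) \times L$ onto $C_1$. The direction from $C_1$ amenable to $C_1 \cap L^\perp$ amenable is immediate from (\ref{prop:am_int:1}), since $L^\perp$ is a subspace and hence amenable by (\ref{prop:am_int:4}). The reverse direction combines (\ref{prop:am_int:4}) (so $L$ is amenable), (\ref{prop:am_int:2}) (whence the product $(C_1 \cap L^\perp) \times L$ is amenable), and (\ref{prop:am_int:3}) (to transport amenability across the affine isometry). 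The main bookkeeping in each step is to match the bounded sets appearing in the various amenability statements, but since the relevant maps are non-expansive or bi-Lipschitz, this is routine.
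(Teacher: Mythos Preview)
Your proposal is correct and follows the same overall architecture as the paper, with two local simplifications worth noting.

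For (\ref{prop:am_int:4}) the paper first checks by hand that a closed half-space is amenable and then invokes (\ref{prop:am_int:1}) to handle finite intersections of half-spaces; this creates a forward dependence of (\ref{prop:am_int:4}) on (\ref{prop:am_int:1}). Your argument instead observes that for polyhedral $C_1$ both $C_1$ and $\aff \stdFaceC$ are polyhedral with non-empty intersection, so Proposition~\ref{prop:blr} (with $k=2$) gives bounded linear regularity directly. This is shorter and self-contained, and it lets you prove (\ref{prop:am_int:4}) before (\ref{prop:am_int:1}) without circularity.

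For (\ref{prop:am_int:1}) the paper applies the bounded-linear-regularity form of amenability (Proposition~\ref{prop:am_eq}(ii)) to each $\stdFaceC_i$, picking up extra $\dist(x,\aff \stdFaceC_i)$ terms that then need to be bounded by $\dist(x,\aff \stdFaceC)$. You instead restrict at the outset to $x \in \aff \stdFaceC \cap B$ and use the inclusion $\aff \stdFaceC \subseteq \aff \stdFaceC_i$ to feed $x$ straight into Definition~\ref{def:am} for each $\stdFaceC_i$; this avoids the detour through Proposition~\ref{prop:am_eq} and the auxiliary affine-hull distance terms. The remaining items (\ref{prop:am_int:2}), (\ref{prop:am_int:3}), (\ref{prop:am_int:5}) match the paper's arguments essentially verbatim.
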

\begin{proof} 
	\fbox{$(i)$}
	Let $\stdFaceC$ be a face of $C_1\cap C_2$ and let 
	$C \coloneqq C_1 \cap C_2$. 
	By Proposition~\ref{prop:face_intersection}, there are faces $\stdFaceC _1 \face C_1, \stdFaceC_2 \face C_2$ such that \[\stdFaceC = \stdFaceC_1 \cap \stdFaceC_2,\qquad 
	\reInt(\stdFaceC) = \reInt(\stdFaceC_1) \cap \reInt (\stdFaceC_2).\]
	In particular, this implies that 
	\begin{equation}\label{eq:am_int:1}
	\reInt (\stdFaceC _1) \cap \reInt (\stdFaceC _2) \neq \emptyset.
	\end{equation}
	Now, we are ready to show that $\stdFaceC$ is an amenable face of $C$.
	Let $B$ be an arbitrary bounded set. By~\eqref{eq:am_int:1} and Proposition~\ref{prop:blr}, there exists $\hat \kappa$ such that 
	\[
	\dist(x, \stdFaceC) \leq \hat \kappa (\dist(x, \stdFaceC_1)+\dist(x,\stdFaceC_2)), \quad \forall x \in B.
	\]
	Since $\stdFaceC_1$ and $\stdFaceC_2$ are amenable faces of $C_1$ and $C_2$ respectively, there are constants $\kappa _1, \kappa _2$ satisfying \eqref{eq:am_blr}. Therefore, for every $x \in B$ we have
	\[
	\dist(x, \stdFaceC) \leq \hat \kappa (\kappa_1\dist(x, C_1)+\kappa_1\dist(x, \aff \stdFaceC_1) +\kappa_2\dist(x, C_2)+\kappa_2\dist(x, \aff \stdFaceC_2)).
	\]
	Since $C \subseteq C_1\cap C_2$ and $\aff \stdFaceC \subseteq (\aff \stdFaceC _1) \cap (\aff\stdFaceC_2)$, we have
	for $i \in \{1,2\}$ 
	\[
	\dist(x,C_i) \leq \dist(x, C), \quad \dist(x, \aff \stdFaceC _i) \leq \dist(x, \aff \stdFaceC), \quad \forall x \in B.
	\]
	Letting $\kappa \coloneqq 2\hat \kappa \max\{\kappa_1, \kappa_2\}$, we conclude that
	\[
	\dist(x,\stdFaceC) \leq \kappa (\dist(x, C)+\dist(x,\aff \stdFaceC)), \qquad \forall x \in B.
	\]
	By Proposition~\ref{prop:am_eq}, this implies that $\stdFaceC$ is an amenable face of $C$.
	
	\fbox{$(ii)$} We assume  $\ambSpace \times \ambSpace$
	is equipped with a norm such that 
	\[
	\norm{(x,y)} = \norm{x} + \norm{y}, \quad \forall (x,y) \in \ambSpace.
	\]
	Let $\stdFace \face C_1 \times C_2$, then there are $F_1 \face C_1$ and $F_2 \face C_2$ such that $F = F_1 \times F_2$.
	
	Let $B$ be a bounded set in $\ambSpace \times \ambSpace$. 
	We denote by $B_1$ and $B_2$ the projection of $B$ on the first and second coordinate variables, respectively.
	Since $B_1,B_2$ are bounded and $F_1,F_2$ are amenable faces, there 
	are positive constants $\kappa_1,\kappa_2$ satisfying the definition of amenability \eqref{eq:am_def}.
	With that,  let $(x,y) \in \aff F = (\aff F_1) \times (\aff F_2)$ be such that $(x,y) \in B$. We have
	\begin{align*}
  	\dist((x,y),F) & = \dist(x,F_1) + \dist(y,F_2)\\
  	& \leq \max\{\kappa_1,\kappa_2\}(\dist(x,C_1)+\dist(y,C_2))\\
  	&=\max\{\kappa_1,\kappa_2\}(\dist((x,y),C_1\times C_2)),
	\end{align*}
	which completes the proof of item $(\ref{prop:am_int:2})$.
	As a remark, we note that because of the equivalence of norms on finite-dimensional spaces, it does not matter which norm we use in $\ambSpace \times \ambSpace$, except that the constants might change.
	
	\fbox{$(iii)$} Since $\stdMap$ is an injective affine map, there exists some injective linear map $\mathcal{B}$ and $y_0 \in \hat \ambSpace$ such that
	\[
	\stdMap(x) = y_0 + \mathcal{B}(x), \qquad \forall x \in \ambSpace.
	\]
	If $\mathcal{B}$ is the zero operator, we are done because a set with a single point is always amenable. 
%
	So, first suppose that $C_1$ is amenable. We note that $\stdFaceC \face C_1$ if and only if $\stdMap(\stdFaceC) \face \stdMap(C_1)$. Furthermore, we have $\aff \stdMap(\stdFaceC) = \stdMap(\aff \stdFaceC)$.
	
	Let $B \subseteq \hat \ambSpace$ be a bounded set and $\stdMap(\stdFaceC)$ be a face of $\stdMap(C_1)$. 
	Because $\stdMap$ is injective, $\stdMap^{-1}(B)$ is bounded in $\ambSpace$, so there exists $\kappa > 0$ such that 
	\begin{equation}\label{eq:iso1}
	\dist(x, \stdFaceC) \leq \kappa \dist(x, C_1), \quad \forall x \in (\aff \stdFaceC )\cap \stdMap^{-1}(B).
	\end{equation}
	
	Let $\sigma_{\max}$ and $\sigma _{\min}$ denote the 
	maximum and minimum singular values of $\mathcal{B}$, so that 
	\[
	\sigma _{\max} = \max \{ \norm{\mathcal{B}(x)} \mid \norm{x} = 1 \}, \qquad 	\sigma _{\min} = \min\{ \norm{\mathcal{B}(x)} \mid \norm{x} = 1 \},
	\]
	where we also use $\norm{\cdot}$ to denote the norm in $\hat \ambSpace$.
	Because $\mathcal{B}$ is injective and is not the zero operator, we have $\sigma _{\min} > 0$.
	
	Let $\stdMap(x) \in B\cap (\stdMap(\aff \stdFaceC))$, 
	we have
	\begin{align*}
	\dist(\stdMap(x), \stdMap(\stdFaceC)) & = \dist(\mathcal{B}(x), \mathcal{B}(\stdFaceC))\\
	 & \leq \sigma_{\max} \dist(x, \stdFaceC)\\
	 & \leq \sigma_{\max}\kappa \dist(x, C_1)\\
	 & \leq \frac{\sigma_{\max}}{\sigma_{\min}}\kappa \dist(\mathcal{B}({x}), \mathcal{B}(C_1))\\
	  & = \frac{\sigma_{\max}}{\sigma_{\min}}\kappa \dist(\stdMap({x}), \stdMap(C_1)),
	\end{align*}
	where the second inequality follows from \eqref{eq:iso1}. This shows that $\stdMap(C_1)$ is amenable. The converse is analogous, so it is omitted.
	
	\fbox{$(iv)$} First, we note that a closed half-space $H^+$ must be amenable. It only has two faces, $H^+$ itself and the underlying hyperplane which we denote by $H$. Since $H$ is an affine set, we have $\aff H = H$, and the amenability condition \eqref{eq:am_def} is satisfied.
	
	Since any polyhedral set can be expressed as an intersection of finitely many closed half-spaces, it must be amenable by item~\eqrefit{prop:am_int:1}.
	
	\fbox{$(v)$} Suppose that $C_1$ is amenable. 
	Since $\lineality C_1 ^\perp$ is a subspace, by item~$(\ref{prop:am_int:4})$, $\lineality C_1 ^\perp$ is amenable. 
	Then, $C_1 \cap (\lineality C_1) ^\perp $ is amenable by item~$(\ref{prop:am_int:1})$.
	Conversely, suppose that $C_1 \cap (\lineality C_1) ^\perp $ is amenable. Then, $(C_1 \cap (\lineality C_1 ^\perp)) \times \lineality C_1$ is amenable by items~$(\ref{prop:am_int:2})$ and $(\ref{prop:am_int:4})$. Since $C_1$ is isomorphic to 
	to $(C_1 \cap (\lineality C_1) ^\perp) \times \lineality C_1$, $C_1$ is amenable by item~$(\ref{prop:am_int:3})$.
\end{proof}
In what follows, we recall that the \emph{doubly nonnegative cone $\doubly{n}$} is the cone of $n\times n$ real symmetric matrices which are positive semidefinite and have nonnegative entries. Next, a \emph{spectrahedral set} $C$ is defined to be the intersection of 
an affine space $\stdAffine \subseteq \S^n$ with  $\PSDcone{n}$ or anything linearly isomorphic
to $\stdAffine \cap \PSDcone{n}$.
%
We also recall that a closed convex cone $\stdCone$ is said to be \emph{homogeneous} if its group of automorphisms acts transitively in the interior of $\stdCone$.
\begin{corollary}\label{cor:am}
	The following convex sets are amenable.
	\begin{enumerate}[$(i)$]
		\item The doubly nonnegative cone $\doubly{n}$.
		\item Spectrahedral sets.
		\item Homogeneous cones.
	\end{enumerate}
\end{corollary}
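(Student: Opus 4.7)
The plan is to derive all three items from the amenability of the positive semidefinite cone $\PSDcone{n}$ (a symmetric cone, hence amenable by \cite{L19}) combined with the closure properties collected in Proposition~\ref{prop:am_int}.

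For \emph{(i)}, write $\doubly{n} = \PSDcone{n} \cap \mathcal{N}^n$, where $\mathcal{N}^n \subseteq \S^n$ denotes the (polyhedral) cone of symmetric matrices with componentwise nonnegative entries. Polyhedrality gives amenability of $\mathcal{N}^n$ via Proposition~\ref{prop:am_int}\eqrefit{prop:am_int:4}, and intersecting with the amenable $\PSDcone{n}$ yields the claim by Proposition~\ref{prop:am_int}\eqrefit{prop:am_int:1}.

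For \emph{(ii)}, by definition every spectrahedral set is linearly isomorphic to some $\stdAffine \cap \PSDcone{n}$ with $\stdAffine \subseteq \S^n$ an affine subspace. Affine subspaces are polyhedral and thus amenable by Proposition~\ref{prop:am_int}\eqrefit{prop:am_int:4}; intersecting with $\PSDcone{n}$ preserves amenability by Proposition~\ref{prop:am_int}\eqrefit{prop:am_int:1}; and transporting by an injective affine isomorphism preserves amenability by Proposition~\ref{prop:am_int}\eqrefit{prop:am_int:3}. Chaining these three steps handles \emph{(ii)}.

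For \emph{(iii)}, I would invoke the classical representation result (due to Chua) according to which every homogeneous cone can be realised as a slice of a positive semidefinite cone, that is, as $\stdAffine \cap \PSDcone{n}$ for some linear subspace $\stdAffine \subseteq \S^n$. Thus every homogeneous cone is spectrahedral, and amenability follows from \emph{(ii)}. The only non-routine ingredient beyond Proposition~\ref{prop:am_int} is the spectrahedral representability of homogeneous cones used in \emph{(iii)}; once this is invoked, all three items reduce to a short chain of already established preservation properties, so there is no significant obstacle to carrying out the plan.
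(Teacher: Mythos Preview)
Your proposal is correct and follows essentially the same argument as the paper: each item is reduced to the amenability of $\PSDcone{n}$ together with the closure properties in Proposition~\ref{prop:am_int}, and for (iii) the paper likewise invokes Chua's result that homogeneous cones are spectrahedral slices of $\PSDcone{n}$.
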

\begin{proof}
	\fbox{$(i)$} The doubly nonnegative cone $\doubly{n}$ is the intersection of $\PSDcone{n}$ and the cone of symmetric nonnegative matrices, which are both amenable. Therefore, $\doubly{n}$ is amenable by item~\eqrefit{prop:am_int:1} of Proposition~\ref{prop:am_int}.
	
	\fbox{$(ii)$} Let $\stdAffine$ be an affine space.
	Because the cone of symmetric positive semidefinite matrices $\PSDcone{n}$ is amenable (\cite[Proposition~33]{L19}), an intersection of the format $\stdAffine \cap \PSDcone{n}$ or anything linearly isomorphic to $\stdAffine \cap \PSDcone{n}$ must be amenable by items \eqrefit{prop:am_int:1}, \eqrefit{prop:am_int:3} and \eqrefit{prop:am_int:4} of Proposition~\ref{prop:am_int}.
	
		\fbox{$(iii)$} Chua \cite{CH03} (see also Proposition 1 and  Section 4 of the paper by Faybusovich \cite{FB02}) showed that homogeneous cones are ``slices'' of the positive semidefinite cone. The precise statement is that if $\stdCone$ is a homogeneous cone in $\Re^m$, there exists $n \geq m$ and an injective linear map $M$ such that 
		\[
		M(\reInt \stdCone) = (\reInt\PSDcone{n}) \cap M(\Re^m),
		\]
		see \cite[Corollary~4.3]{CH03}. In particular, 
		we have $M(\stdCone) = \PSDcone{n} \cap M(\Re^m)$, which shows that $\stdCone$ is a spectrahedral set and must be amenable by item $(ii)$.	
\end{proof}
\begin{remark}
The arguments in Corollary~\ref{cor:am} can be used to show that the feasible region $S$ of a conic linear program where the underlying cone is amenable must also be amenable. This follows from item~\eqrefit{prop:am_int:1} of Proposition~\ref{prop:am_int} when $S$ is expressed as the intersection of an affine space and an amenable cone. Next, suppose that $S$ is written as $\{y \mid c - \stdMap y \in \stdCone \}$, where $c$ is a vector and $\stdMap$ is an injective linear map of appropriate dimensions. With that, we have  
$ S = \stdMap^{-1}((c-\stdCone)\cap \matRange \stdMap)$ so that $S$ is amenable if $\stdCone$ is amenable, by items~\eqrefit{prop:am_int:1} and ~\eqrefit{prop:am_int:3} of Proposition~\ref{prop:am_int}.
\end{remark}

Corollary~\ref{cor:am} solves a few of the questions that were outlined in the conclusion of \cite{L19}, in particular whether homogeneous cones are amenable or not. In addition, although error bounds for the doubly nonnegative cone were shown in \cite{L19}, the amenability of $\doubly{n}$ was left open. 

We note that the amenability of $\doubly{n}$ has the following curious consequences.
First, it shows that the completely positive cone $\mathcal{CP}^n$ is amenable for $n \leq 4$, since $\mathcal{CP}^n = \doubly{n}$ for $n \leq 4$.  
However, $\mathcal{CP}^n$ is not amenable for $n \geq 5$ because it is not facially exposed, see \cite{Zh18}. Nevertheless, the fact that $\mathcal{CP}^4$ is amenable gives an explicit example of a cone that is amenable but whose dual cone is not: the dual of 
$\mathcal{CP}^4$ is the cone of $4\times 4$ symmetric copositive matrices which is known to not be facially exposed.

Next, we will discuss some inheritance properties of amenability. In what follows we say that a face 
$\stdFaceC \face C$ is \emph{maximal} if $\stdFaceC \neq C$ and there is no face $\hat \stdFaceC \face C$ satisfying $\hat \stdFaceC \neq \stdFaceC$, $\hat \stdFaceC \neq C$ and 
$\stdFaceC \face \hat \stdFaceC \face C$.
\begin{proposition}[Inheritance and transitivity of amenability]\label{prop:inh}
Let $C$ be a closed convex set. The following items hold:
\begin{enumerate}[$(i)$]
	\item\label{prop:inh:tr} (Transitivity) Let $\hat \stdFaceC$ and $\stdFaceC$ be faces satisfying $\hat \stdFaceC \face \stdFaceC \face C$, where $\stdFaceC$ is an amenable face of $C$. Then, $\hat \stdFaceC$ is an amenable face of $\stdFaceC$ if and only if it is an amenable face of $C$.
	\item (Inheritance) If $C$ is amenable, then 
	every face $\stdFaceC \face C$ is an amenable convex set by itself.
	\item $C$ is amenable if and only if every maximal face $\stdFaceC\face C$ is both an amenable face of $C$ and an amenable convex set by itself.
\end{enumerate}
\end{proposition}
\begin{proof}
\fbox{$(i)$} Suppose that $\hat \stdFaceC$ is an amenable face of 
$C$ and let $B$ be a bounded set. Since 
$\stdFaceC \subseteq C$, we have $\dist(x, C) \leq \dist(x, \stdFaceC)$, for every $x \in B$. In view of Definition~\ref{def:am} and \eqref{eq:am_def}, $\hat \stdFaceC$ must be an amenable face of $\stdFaceC$ as well. Conversely, 
suppose that $\hat \stdFaceC$ is an amenable face of $\stdFaceC$. By assumption, $\stdFaceC$ is an amenable face of $C$, so by Proposition~\ref{prop:am_eq},
 there exists $\kappa _B > 0$ such that 
\begin{equation}\label{eq:inh_1}
\dist(x,\stdFaceC) \leq \kappa _B \max \{\dist(x,\aff \stdFaceC), \dist(x,C) \},\qquad \forall x \in B.
\end{equation}
Similarly, since $\hat \stdFaceC$ is an amenable face of $\stdFaceC$, there exists $\hat \kappa _B > 0$ such that
\begin{equation}\label{eq:inh_2}
\dist(x,\hat \stdFaceC) \leq \hat \kappa _B \max \{\dist(x,\aff \hat \stdFaceC), \dist(x,\stdFaceC) \},\qquad \forall x \in B.
\end{equation}
Combining \eqref{eq:inh_1} and \eqref{eq:inh_2} and 
using the fact that $\dist(x, \aff \stdFaceC) \leq \dist(x, \aff \hat \stdFaceC)$, we conclude that 
$\hat \stdFaceC$ is an amenable face of $C$.

\fbox{$(ii)$} 
 Every face $\stdFaceC \face C$ satisfies
$\stdFaceC \coloneqq C \cap \aff \stdFaceC.$
Therefore, if $C$ is an amenable cone, by item~\eqrefit{prop:am_int:1} of Proposition~\ref{prop:am_int}, $\stdFaceC$ must be an amenable convex set by itself.

\fbox{$(iii)$} If $C$ is amenable, by item $(ii)$, all the maximal faces must be amenable convex sets as well.
Conversely, suppose that $C$ is such that every maximal face is an amenable face and an amenable convex set by itself. Let $\hat \stdFaceC \face C$ be an arbitrary face. Because every proper face is contained in a maximal face, $\hat \stdFaceC$ must be a face of some maximal face $\stdFaceC$.  By assumption, $\stdFaceC$ is both an amenable face of $C$ and a convex amenable set by itself, so $\hat \stdFaceC$ must be an amenable face of $C$ by item~\eqrefit{prop:inh:tr}.
\end{proof}

\begin{remark}[Set operations and notions of exposedness]
Propositions~\ref{prop:am_int} and Proposition~\ref{prop:inh} shows that amenability 
is preserved by quite a few set operations. 
We compare briefly how other notions of exposedness fare in this regard. See Table~\ref{tab:table1} for a summary.
\begin{itemize}
	\item Facial exposedness of convex sets is also preserved by finite intersections, direct products, injective linear images. Also, polyhedral sets must be facially exposed, which is a consequence of item \eqrefit{prop:am_int:4} of Proposition~\ref{prop:am_int} and Proposition~\ref{prop:imp} (see also \cite[Corollary~2]{TM76}). 
	It is well-known, however, that facial exposedness does not satisfy transitivity. 
	That is, it can be the case that $\hat \stdFaceC$ is a facially exposed face of $\stdFaceC$, $\stdFaceC$ is a facially exposed face of $C$ but $\hat \stdFaceC$ is not a facially exposed face of $C$. Homogeneous cones are facially exposed \cite{TruongTuncel}.
	
	\item Niceness is only defined for cones but is also preserved by finite intersections (see \cite[Proposition~5]{pataki}), direct products, injective linear images. Furthermore, niceness is transitive and inherited by the faces of nice cones. The former follows directly from the definition of niceness. The latter follows from the fact that a face $\stdFace \face \stdCone$ satisfies $\stdFace = \stdCone \cap \lspan \stdFace$ and  the intersection of nice cones is nice.
	Homogeneous cones are nice, 
	see \cite[Proposition~4]{ChuaTuncel} and \cite{CH03}. This also follows from  Corollary~\ref{cor:am} and Proposition~\ref{prop:imp}.
	\item Projectional exposedness is also only defined for cones and it is preserved by direct products and injective linear images. Polyhedral cones must be projectionally exposed, see \cite{BLP87} and \cite[Corollary~3.4]{ST90}. 
	Symmetric cones are known to satisfy a stronger form of projectional exposedness where the projections can be chosen to be orthogonal, see \cite[Proposition~33]{L19}, but it is unknown whether homogeneous cones are projectionally exposed in general.
	Notably, it is not known whether projectional exposedness is preserved by intersections.
	Nevertheless, projectionally exposedness is transitive and is inherited by the faces of projectionally exposed cones, as shown in Lemmas~2.2 and 2.3 of \cite{ST90}.
\end{itemize} 	
\end{remark}

\begin{table}[ht]
\def\arraystretch{1.5}
\begin{tabularx}{\textwidth}{ll|Y|Y|Y|Y}
                                                                                 &                       
                                                                                  
& Facially \mbox{Exposed} & Nice & Amenable & Projectionally Exposed \\ \hline
\multicolumn{2}{l|}{Defined for convex sets}                                                      
& \centering \cmark    & \centering \xmark                               & \cmark    & \xmark                     \\ 
\hline
\multicolumn{1}{l|}{ \multirow{3}{*}{\makecell[l]{Preserved \\  under}}} & \makecell[l]{finite\\ \mbox{intersections}}   
&  \cmark    & \cmark    &  \cmark     & \qmark               \\ 
\cline{2-6} 
\multicolumn{1}{l|}{}                                                            & \makecell[l]{direct\\ product}         
&  \cmark    &  \cmark    &  \cmark    &\cmark                  \\ 
\cline{2-6} 
\multicolumn{1}{l|}{}                                                            & \makecell[l]{injective \\ \mbox{linear image}} 
&  \cmark    &  \cmark    &  \cmark    & \cmark                  \\ 
\hline
\multicolumn{2}{l|}{Face transitive}                                                                         
& \xmark    &  \cmark    & \cmark       & \cmark                 \\ 
\hline
\multicolumn{2}{l|}{Symmetric cones}                                                       
&  \cmark   &  \cmark   &  \cmark    & \cmark                     \\ 
\hline
\multicolumn{2}{l|}{Homogeneous cones}                                                       
&  \cmark   &  \cmark   &  \cmark    & \qmark                     \\ 
\hline
\end{tabularx}

\centering
\caption{Relations between different notions}
\label{tab:table1}

\end{table}

\section{Slices of amenable cones}\label{sec:slices}
Let $\stdCone$ be a pointed closed convex cone. Then, it can be shown 
that $\stdCone$ is generated by a compact ``slice'' as follows.
Let $e \in \reInt \stdCone^*$ and define 
\[
C \coloneqq \{x\in \stdCone \mid \inProd{x}{e} = 1 \}.
\]
With that, $C$ is compact and $\stdCone$ is the cone generated 
by $C$. Naturally, many properties of $C$ are transferred to $\stdCone$ and vice-versa.

In this subsection, we take a look at how amenability is transferred from 
$C$ to $\stdCone$. We start with the following observation.
\begin{proposition}[Polyhedral cuts preserve amenability]\label{prop:slices}
Let $\stdCone$ be an amenable closed convex cone and let $P$ be a polyhedral set. Then $\stdCone \cap P$ is an amenable convex set. 
In particular, if $\stdCone$ is pointed, then $\stdCone$ is generated by a 
compact amenable slice.
\end{proposition}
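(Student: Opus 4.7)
The plan is to deduce the proposition as an essentially immediate consequence of the set-operation results already proved in Proposition~\ref{prop:am_int}, combined with a standard construction of a compact slice. For the first statement, I would observe that $\stdCone$ is an amenable convex set (recall the remark after Proposition~\ref{prop:am_eq} that amenability as a cone and as a convex set coincide), and that the polyhedral set $P$ is amenable by item~\eqrefit{prop:am_int:4} of Proposition~\ref{prop:am_int}. Then item~\eqrefit{prop:am_int:1} of the same proposition yields amenability of $\stdCone \cap P$.

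For the second statement, I would pick any $e \in \interior \stdCone^*$; this interior is nonempty because $\stdCone$ is pointed, so $\stdCone^*$ is full-dimensional. I would then set
\[
C \coloneqq \stdCone \cap H, \qquad H \coloneqq \{x \in \ambSpace \mid \inProd{x}{e} = 1\}.
\]
The hyperplane $H$ is polyhedral (it can be written as the intersection of two closed half-spaces), so the first part of the proposition immediately gives that $C$ is amenable. Compactness of $C$ follows from a standard argument: if there were an unbounded sequence $\{x_k\} \subseteq C$, normalising would yield a unit vector $y \in \stdCone$ with $\inProd{y}{e}=0$, contradicting $e \in \interior \stdCone^*$. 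Finally, $\stdCone = \cone C$ because every nonzero $x \in \stdCone$ satisfies $\inProd{x}{e}>0$ and thus $x/\inProd{x}{e} \in C$.

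There is no real obstacle here, since all the heavy lifting is done by Proposition~\ref{prop:am_int}. The only small point to be careful about is the verification that a hyperplane is polyhedral (so that the first part of the proposition applies to the slice), and that pointedness of $\stdCone$ is precisely the condition needed both for $\interior \stdCone^* \neq \emptyset$ and for compactness of the slice.
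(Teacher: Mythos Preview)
Your proposal is correct and follows essentially the same approach as the paper: both invoke items~\eqrefit{prop:am_int:1} and~\eqrefit{prop:am_int:4} of Proposition~\ref{prop:am_int} for the first claim, then take $e$ in the (relative) interior of $\stdCone^*$ and apply the first part to the hyperplane slice. Your version just spells out the compactness and $\stdCone=\cone C$ verifications in slightly more detail than the paper does.
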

\begin{proof}
Since $P$ is polyhedral, 
$\stdCone \cap P$ is amenable by items~\eqrefit{prop:am_int:1} and \eqrefit{prop:am_int:4} of Proposition~\ref{prop:am_int}.

For the second part, let $e \in \reInt \stdCone^*$ and define $C \coloneqq \{x\in \stdCone \mid \inProd{x}{e} = 1 $. As remarked previously, $C$ is compact and is the intersection of 
$\stdCone$ and the hyperplane $\{x \in \ambSpace \mid \inProd{x}{e} = 1  \}$. Therefore, $C$ is amenable and $\stdCone = \cone C$.
\end{proof}
Next, we take a look at the converse of Proposition~\ref{prop:slices} and check whether the cone generated by an amenable compact convex set is amenable. This is a harder question and requires some careful analysis. Before we state and prove the result in Theorem~\ref{thm:slice}, we need a few preparatory results.

\begin{proposition}\label{prop:technicalslice}  
	Let $\stdCone = \cone C$, where $C\subseteq \ambSpace$ is a compact convex set contained in the hyperplane 
	\[
	H = \{x \in \ambSpace \mid \inProd{e}{x} = 1\},
	\]
	where $e\in \ambSpace$ is nonzero. Then for every $x\in H\setminus (-\stdCone^*)$  
	\[
	\dist (x,C)\leq  \|e\| r\dist (x,\stdCone),
	\]
	where $r = \max_{u\in C}\|u\|$.
\end{proposition}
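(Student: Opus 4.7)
The plan is to bound $\dist(x,C)$ by $\norm{x-c}$ for a judicious $c \in C$, and then to compare $\norm{x-c}$ with $\norm{x-p}$, where $p$ is the metric projection of $x$ onto $\stdCone$. Since $C\subseteq H$ and $\cone C = \stdCone$, every nonzero $p\in \stdCone$ decomposes uniquely as $p = \lambda c$ with $\lambda = \inProd{e}{p} > 0$ and $c = p/\lambda \in \cone C \cap H = C$. The hypothesis $x \notin -\stdCone^*$ rules out $p = 0$, since by Moreau's orthogonal decomposition for closed convex cones, the projection of $x$ onto $\stdCone$ vanishes iff $x \in -\stdCone^*$. Hence $c$ is well defined and $\dist(x,C)\leq \norm{x-c}$.

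Moreau's theorem also yields that $v := x - p$ is orthogonal to $p$, hence to $c$. Writing $p - c = (\lambda-1)c$ and applying the Pythagorean theorem,
\[
\norm{x-c}^2 = \norm{v}^2 + (\lambda-1)^2\norm{c}^2.
\]
From $x\in H$ one gets $1 - \lambda = \inProd{e}{x} - \inProd{e}{p} = \inProd{e}{v}$, so the display becomes $\norm{x-c}^2 = \norm{v}^2 + \inProd{e}{v}^2 \norm{c}^2$.

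The main obstacle is to bound $|\inProd{e}{v}|$ sharply enough: a direct Cauchy--Schwarz estimate $|\inProd{e}{v}| \leq \norm{e}\norm{v}$ only delivers a constant of $\sqrt{1+\norm{e}^2 r^2}$, strictly worse than the claimed $\norm{e}r$. To sharpen it, I exploit $v \perp p$: decomposing $e = e_\parallel + e_\perp$ parallel and orthogonal to $p$, we have $\inProd{e}{v} = \inProd{e_\perp}{v}$ and
\[
\inProd{e}{v}^2 \leq \norm{e_\perp}^2 \norm{v}^2 = \bigl(\norm{e}^2 - \tfrac{\inProd{e}{p}^2}{\norm{p}^2}\bigr)\norm{v}^2 = \bigl(\norm{e}^2 - \tfrac{1}{\norm{c}^2}\bigr)\norm{v}^2,
\]
the final equality using $\norm{p} = \lambda\norm{c}$ and $\inProd{e}{p} = \lambda$. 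Plugging back,
\[
\norm{x-c}^2 \leq \norm{v}^2 + \bigl(\norm{e}^2\norm{c}^2 - 1\bigr)\norm{v}^2 = \norm{e}^2\norm{c}^2\norm{v}^2 \leq \norm{e}^2 r^2 \norm{v}^2,
\]
and taking square roots yields $\dist(x,C) \leq \norm{e}r\,\dist(x,\stdCone)$, as required.
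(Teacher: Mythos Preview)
Your proof is correct and follows essentially the same route as the paper: project $x$ onto $\stdCone$, use Moreau's decomposition to see that the projection $p=\lambda c$ is nonzero and that $v=x-p\perp p$, apply the Pythagorean identity $\|x-c\|^2=\|v\|^2+(\lambda-1)^2\|c\|^2$, and then sharpen the naive Cauchy--Schwarz bound to obtain the exact constant $\|e\|r$. The only cosmetic difference is in this last step: the paper first proves the angle estimate $|\langle w,v\rangle|\le \|w\|\|v\|\sqrt{1-1/(\|e\|^2 r^2)}$ for $w\in\stdCone$ and $v\perp e$ and then applies it with $w=p-c$, $v=x-c$, whereas you obtain the equivalent bound more directly by writing $1-\lambda=\langle e,v\rangle$ and decomposing $e$ orthogonally along $p$.
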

\begin{proof} Let $u\in C$ and let $v$ be such that $\langle v, e\rangle = 0$ and $\norm{v} \neq 0$. Then 
	\begin{equation}\label{eq:boundprod}
	\left|\left\langle \frac{u}{\|u\|}, \frac{v}{\|v\|}\right\rangle\right| =\frac{1}{\|u\|} \left| \left\langle u - \frac{1}{\|e\|^2}e, \frac{v}{\|v\|}\right\rangle\right| \leq \frac{\|u - \frac{1}{\|e\|^2} e\|}{\|u\|}. 
	\end{equation}
	Observe that 
	\begin{equation}\label{eq:boundtechnical}
	\frac{\|u - \frac{1}{\|e\|^2} e\|^2}{\|u\|^2} = \frac{\|u\|^2 - \frac{1}{\|e\|^2} }{\|u\|^2} = 1 - \frac{1}{\|u\|^2 \|e\|^2}\leq 1 - \frac{1}{r^2 \|e\|^2}.
	\end{equation}	
	Hence from \eqref{eq:boundprod} and \eqref{eq:boundtechnical} we have for any $u\in C$ and any $v$ such that $\langle v, e\rangle = 0$ that
	\begin{equation}\label{eq:boundtechnical2}
	|\langle u,v\rangle| \leq \|v\|\|u\| \sqrt{1-\frac{1}{\|e\|^2 r^2}}.
	\end{equation}
	Since for every $w\in \stdCone$ we have $w = \lambda u$ where $u\in C$ and $\lambda\geq 0$,  from \eqref{eq:boundtechnical2} we obtain 
	\begin{equation}\label{eq:prodbound}
	|\langle w,v\rangle| \leq \|v\|\|w\| \sqrt{1-\frac{1}{\|e\|^2 r^2}} \quad \forall w\in \stdCone, \; \forall v\, \text{ s.t. } \langle v, e\rangle  = 0.
	\end{equation}
	
	Now let $x\in H\setminus (-\stdCone^*)$, and let $y$ be the projection of $x$ onto $\stdCone$. Since $y\in \stdCone = \cone C$, there is $z\in C$ and $\lambda\geq 0$ such that $y = \lambda z$. Moreover, since $x\notin - \stdCone^*$ we know that $y\neq 0$, hence $\lambda \neq 0$.  Since $\stdCone$ is a cone, we deduce that $\langle x-y,y\rangle  = \langle x-y, z\rangle= 0$, and hence
	\begin{equation}\label{eq:triplesquare}
	\|x-z\|^2 = \|x-y\|^2 + 2 \langle x-y, y-z\rangle + \|y - z\|^2 = \|x-y\|^2 + \|y-z\|^2.
	\end{equation}
	Furthermore,
	\begin{equation}\label{eq:nmbound}
	\|y-z\|^2 = \langle y - z, y - z\rangle = \langle y - z, y - x\rangle + \langle y - z, x - z\rangle =  \langle y-z, x - z\rangle = |\langle y-z, x - z\rangle|.
	\end{equation}
	Since $x,z\in H$, we have $\langle x-z,e\rangle = 0$, and also either $y-z$  is in $\stdCone$ (if $\lambda \geq 1$) or $z-y$  is in $\stdCone$ (if $\lambda \leq 1$). Hence from \eqref{eq:prodbound} and \eqref{eq:nmbound}
	\[
	\|y-z\|^2 = |\langle y-z, x - z\rangle| \leq \|y-z\|\|x-z\| \sqrt{1-\frac{1}{\|e\|^2 r^2}}.
	\]
	In the case when $\lambda\neq 1$ (and hence $\|y-z\|\neq 0$) we can cancel $\|y-z\|$. Taking squares on both sides and using \eqref{eq:triplesquare} we have
	\[
	\|x-z\|^2 \leq \|x-y\|^2 + \|x-z\|^2 \left(1-\frac{1}{\|e\|^2 r^2}\right),
	\]
	hence
	\[
	\dist(x,C)^2 \leq \|x-z\|^2 \leq \|e\|^2 r^2 \|x-y\|^2 = \|e\|^2 r^2 \dist(x,\stdCone)^2.
	\]
	When $\lambda = 1$, we have $y=z$, and hence 
	\[
	\dist (x, C) \leq \|x-z\| = \|x-y\| = \dist (x, \stdCone)\leq  \|e\| r \dist (x, \stdCone),
	\]
	where the last inequality follows from observing that $\|e\| \geq \frac{\langle u, e\rangle}{\|u\|} = \frac{1}{\|u\|}\geq \frac{1}{r}$.
\end{proof}

Our next result is a geometrically intuitive claim on the existence of a universal upper bound on the angle between a closed convex cone and any vector in the linear span of this cone, given that this cone is not one-dimensional (see Fig.~\ref{fig:angles}).
\begin{figure}[ht]
	\centering\includegraphics[width=0.45\textwidth]{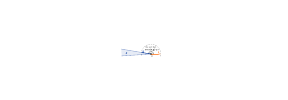}\quad 
	\caption{The intuition behind Proposition~\ref{prop:alpha0}: the angle between a `thick' cone and any vector in its linear span is (uniformly) strictly less than $\pi$.}
	\label{fig:angles}
\end{figure}

\begin{proposition}\label{prop:alpha0} Suppose that $\stdCone\subseteq \ambSpace$ is a closed convex cone. If $\dim \stdCone >1$, then 
	\[
	\alpha:= \inf_{\substack{x\in \lspan \stdCone,\\ \|x\|=1}}\sup_{\substack{y\in \stdCone,\\ \|y\|=1}}\langle x,y\rangle >-1.
	\]
\end{proposition}
\begin{proof} Suppose that the statement is not true. Then there exists a cone $\stdCone$ such that $\dim \stdCone >1$ and a  sequence $\{x_n\}$ such that $x_n\in \lspan \stdCone$, $\|x_n\|=1$ for all $n$, and
	\[
	\lim_{n\to \infty} \sup_{y\in \stdCone, \|y\|=1}\langle x_n,y\rangle = -1.
	\]
	Since $\stdCone$ is closed, for every $x_n$ there is $y_n\in \stdCone$, $\|y_n\| = 1$ such that
	\[
	\sup_{y\in \stdCone, \|y\|=1}\langle x_n,y\rangle = \langle y_n,x_n\rangle.
	\]
	Moreover, by compactness we can assume that  $x_n\to \bar x \in \lspan \stdCone$, $y_n \to \bar y \in \stdCone$, $\|\bar x\|=1$, $\|\bar y\| = 1$, and $\langle\bar x , \bar y \rangle = -1$, equivalently $\bar y = - \bar x \in \stdCone$. Since $\dim \stdCone>1$, there exists $z\in \stdCone$, $\|z\|=1$, such that $z$ is linearly independent with $\bar x,\bar y$.  We then have $\langle \bar x,z\rangle  > -1$, and since $z\in \stdCone$, $\|z\|=1$,  
	\[
	-1 = \lim_{n\to \infty} \sup_{y\in \stdCone, \|y\|=1}\langle x_n,y\rangle \geq \lim_{n\to \infty} \langle x_n,z\rangle = \langle \bar x , z\rangle, 
	\]
	hence $\langle \bar x,z\rangle \leq -1$, a contradiction. 
\end{proof}

\begin{proposition}\label{prop:technicalcone} Let $\stdCone \subseteq \ambSpace$ be a closed convex pointed cone. Then for any face $\stdFace \face \stdCone$ with $\dim \stdFace>1$ there exists  $\beta>0$ such that for any $x\in \lspan \stdFace$ 
and any
	\begin{equation}\label{eq:xofy}
	y \in \Argmax_{\substack{u\in \stdFace, \\\|u\|=1} } \langle x,u\rangle 
	\end{equation}
	we have 
	\begin{equation}\label{eq:mainbounds}
	\dist (x+ t y, \stdCone)\leq \dist (x,\stdCone), \quad \dist(x,\stdFace) \leq \beta \dist (x+ t y, \stdFace),\quad  \forall t\geq 0.
	\end{equation}
\end{proposition}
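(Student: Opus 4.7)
The first inequality is immediate: with $y \in \stdFace \subseteq \stdCone$ and $z$ denoting the projection of $x$ onto $\stdCone$, the point $z + ty$ lies in $\stdCone$, so $\dist(x + ty, \stdCone) \leq \norm{(x+ty) - (z+ty)} = \dist(x, \stdCone)$.

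For the second inequality, the plan is to construct a single unit dual vector $v \in -\stdFace^*$ that certifies a good lower bound on $\dist(x+ty, \stdFace)$ uniformly in $t \geq 0$, via the elementary estimate
\[
\dist(x+ty, \stdFace) \geq \inProd{v}{x+ty}
\]
valid whenever $v \in -\stdFace^*$ and $\norm{v} \leq 1$. Assuming $x \neq 0$ (else the inequality is trivial), set $\alpha = \inProd{x}{y}/\norm{x}$ and define
\[
\hat{x} = \frac{x}{\norm{x}} - \alpha y,
\]
so that a direct expansion gives $\norm{\hat{x}}^2 = 1-\alpha^2$ and $\inProd{\hat{x}}{y} = 0$. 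The candidate dual vector is $v = \hat{x}/\norm{\hat{x}}$; this is well defined as long as $\alpha^2 < 1$, and the boundary case $\alpha = 1$ forces $x = \norm{x}y \in \stdFace$ (trivial), while $\alpha = -1$ will be excluded by Proposition~\ref{prop:alpha0}.

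The crux of the argument is the claim that $\hat{x} \in -\stdFace^*$, which will be handled by a case split on the sign of $\alpha$. If $\alpha \leq 0$, then for every nonzero $u \in \stdFace$ the maximality of $y$ applied to $u/\norm{u}$ gives $\inProd{x}{u} \leq \norm{x}\alpha\norm{u}$, and combining with the Cauchy--Schwarz bound $\inProd{y}{u} \leq \norm{u}$ yields
\[
\inProd{\hat{x}}{u} \leq \alpha(\norm{u}-\inProd{y}{u}) \leq 0.
\]
If $\alpha > 0$, a short argument based on uniqueness of Euclidean projection onto $\stdFace$ identifies $y$ as a normalized projection: writing $P$ for the projection of $x$ onto $\stdFace$, one shows that $P \neq 0$, that $y = P/\norm{P}$, and that $\norm{P} = \norm{x}\alpha$, so that $\norm{x}\hat{x} = x - P$ lies in the normal cone to $\stdFace$ at $P$, which is contained in $-\stdFace^*$.

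Once $\hat{x} \in -\stdFace^*$ is in hand, the dual bound immediately gives $\dist(x+ty, \stdFace) \geq \inProd{v}{x+ty} = \inProd{v}{x} = \norm{x}\sqrt{1-\alpha^2}$ for every $t \geq 0$ (using $\inProd{v}{y} = 0$), while $\dist(x,\stdFace) \leq \norm{x}$, so the ratio $\dist(x,\stdFace)/\dist(x+ty,\stdFace)$ is at most $1/\sqrt{1-\alpha^2}$. To obtain a uniform $\beta$, apply Proposition~\ref{prop:alpha0} to $\stdFace$ (permissible since $\dim \stdFace > 1$) to extract $\alpha_0 > -1$ with $\alpha \geq \alpha_0$; after replacing $\alpha_0$ by $\min(\alpha_0, 0)$ if needed, whenever $\alpha \leq 0$ one has $\alpha^2 \leq \alpha_0^2 < 1$, and whenever $\alpha > 0$ the sharper identity $\dist(x,\stdFace) = \norm{x}\sqrt{1-\alpha^2}$ (from the same projection analysis) reduces the ratio to $1$. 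Setting $\beta := 1/\sqrt{1 - \alpha_0^2}$ then completes the argument. The main obstacle is expected to be the $\alpha > 0$ case of the membership claim $\hat{x}\in -\stdFace^*$: identifying the argmax $y$ with the normalized projection of $x$ onto $\stdFace$ is the one step that is not merely algebraic manipulation, and it must be done carefully enough that uniqueness of the maximizer is justified.
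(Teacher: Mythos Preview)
Your proposal is correct and follows essentially the same route as the paper: both proofs split according to whether $x\in -\stdFace^*$ (equivalently $\alpha\le 0$) or not, both identify $y$ with the normalised projection of $x$ onto $\stdFace$ in the latter case (using pointedness of $\stdFace$ to force uniqueness of the argmax), and both rely on the vector $x-\langle x,y\rangle y$ lying in $-\stdFace^*$ together with Proposition~\ref{prop:alpha0} to produce the constant $\beta = 1/\sqrt{1-\alpha_0^2}$. Your dual-certificate framing (bounding $\dist(x+ty,\stdFace)$ from below by $\langle v,x+ty\rangle$ with $v=\hat{x}/\|\hat{x}\|$, and exploiting $\langle v,y\rangle=0$ to get $t$-independence in one line) is a mild streamlining of the paper's treatment of the $x\in -\stdFace^*$ case, where the paper instead computes the projection of $x+ty$ onto $\stdFace$ explicitly and needs a further split into $0\le t\le -\langle x,y\rangle$ and $t>-\langle x,y\rangle$; the underlying inequalities are the same.
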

\begin{proof} Assume that $\stdFace \face \stdCone$ is a face such that $\dim \stdFace>1$. From Proposition~\ref{prop:alpha0} we know that there exists a constant $\alpha>-1$ such that 
	\begin{equation}\label{eq:uniformq}
	\sup_{u\in \stdFace, \|u\|=1}\langle x,u\rangle \geq \alpha\quad  \forall  x\in \lspan \stdFace,\, \|x\|=1.
	\end{equation}
	We recall that for every cone $\stdFace$ we have $\reInt \stdFace \cap \reInt \stdFace^* \neq \emptyset$\footnote{If $\reInt \stdFace \cap \reInt \stdFace^* = \emptyset$, there is a hyperplane passing through the origin that properly separates $\stdFace$ and $\stdFace^*$. Letting $z$ denote the (nonzero) normal of this hyperplane, we may assume that $\inProd{z}{x} \leq \inProd{z}{y}$ for every $x \in \stdFace^*, y \in \stdFace$. Therefore, $z \in \stdFace^* \cap (-\stdFace) = \{0\}$, which is a contradiction.}.
	Therefore, there exists at least one nonzero element of $\lspan \stdFace$ that belongs to $-\stdFace^*$. In view of \eqref{eq:uniformq},  it must be the case 
	that $\alpha \in (-1,0]$. With that in mind, we let $\beta: = \frac{1}{\sqrt{1-\alpha^2}}\in [1,+\infty)$.
	
Let $x \in \lspan \stdFace$ and suppose that $y$ satisfies \eqref{eq:xofy}. Let $z$ be the projection of $x$ onto $\stdCone$. Observe that since $\stdCone$ is a cone, $z\in \stdCone$ and $t y \in \stdFace\subseteq \stdCone$,  we have $z+ t y\in \stdCone$ and hence we have the first inequality of \eqref{eq:mainbounds}
	\[
	\dist(x+ ty,\stdCone)\leq \|(x+ ty)-(z+ ty)\|  =\|x-z\| = \dist(x, \stdCone).
	\]

	To show the second inequality, first consider the case when $x\notin -\stdFace^*$. Then the projection $z$ of $x$ onto $\stdFace$ is not zero, moreover, the $\Argmax$ function \eqref{eq:xofy} is single-valued at $x$, and this unique value is $y=z/\|z\|$. Indeed, 
	by the properties of the Moreau decomposition, we have 
	\begin{equation}\label{eq:md}
	\inProd{x-z}{z} = 0 \quad \text{and} \quad x-z \in -\stdFace^*.
	\end{equation}
	For $u\in \stdFace$ such that $\|u\|=1$ this yields $	\langle x,u\rangle \leq \langle z,u\rangle \leq \|z\|\|u\| = \|z\|$. 
	We conclude that 
	\[
	\langle x, u\rangle \leq \|z\| =  \frac{\inProd{z}{z} }{\|z\|} =  \frac{\inProd{x}{z} }{\|z\|} \quad \forall u \in \stdFace, \; \|u\|=1,
	\]
	hence,  $y=z/\|z\|$ satisfies \eqref{eq:xofy}. To show that such $y$ is unique, assume that we have another $y'\in \stdFace $, $\|y'\|=1$ such that $\langle x,y\rangle = \langle x,y'\rangle$. Since $\stdFace$ is pointed, the vectors $y$ and $y'$ are noncollinear; moreover, $y+y'\in \stdFace$. Hence, we have  
	\[
	\left\langle x, \frac{y+y'}{\|y+y'\|} \right\rangle > \frac{\inProd{x}{y}+\inProd{x}{y'}}{\|y\|+\|y'\|} = \inProd{x}{y},
	\]
	contradicting the earlier established fact that $y$ maximises the product $\inProd{x}{u}$ over $u\in \stdFace$, $\|u\|=1$.

	From \eqref{eq:md} and the fact that $y \in \stdFace$, we have
	\begin{equation}\label{eq:md2}
	x+ ty =\underbrace{x-z}_{\in -\stdFace^*}  + \underbrace{z+ty}_{\in \stdFace}. 
	\end{equation}
	From \eqref{eq:md} and $y = z/\norm{z}$, we have $\inProd{x-z}{z+ty} = 0$. 
	By the properties of the Moreau decomposition and since \eqref{eq:md2} holds, $z+ty$ must be the projection of 
	$x+ty$ onto $\stdFace$. We conclude that if $x \not \in -\stdFace^*$ and $t \geq 0$ 
	we have
	\begin{equation}\label{eq:f_first}
	\dist (x+ ty,\stdFace) = \|x+ ty - (z+ty)\| = \|x-z\| = \dist (x,\stdFace).
	\end{equation}
	%
	In the remaining case when $x\in - \stdFace^*$, the projection of $x$ onto $\stdFace$ is zero,
	and 
	\begin{equation}\label{eq:xf}
	\dist (x,\stdFace) = \|x\|.
	\end{equation}
	For every $y$ as in \eqref{eq:xofy} and every $u \in \stdFace$, we have 
	\begin{equation}\label{eq:xyxu}
	\inProd{x}{y}\norm{u} \geq \inProd{x}{u}.
	\end{equation}
	Using \eqref{eq:xyxu} and the fact that $\langle x,y \rangle \leq 0$ (since $x\in - \stdFace^*$),  for all $0\leq  t\leq -\langle x,y\rangle$ and all $u\in \stdFace$ such that $\langle y,u\rangle \geq 0$
	\[
	\langle x + t y , u\rangle = \langle x,u\rangle + t \langle y, u\rangle \leq \langle x,y \rangle \|u\| - \langle x, y \rangle \langle y,u\rangle = \langle x, y \rangle (\|u\|- \langle y, u\rangle) \leq 0.
	\] 
	On the the other hand, for all $u\in \stdFace$ such that $\langle u, y\rangle <0$
	\[
	\langle x + t y , u\rangle = \langle x,u\rangle + t \langle y, u\rangle \leq \langle x,y \rangle \|u\|  \leq 0.
	\] 
	hence, $0$ is the projection of $x+ty$ onto $\stdFace$. 
	
	For $t> - \langle x,y\rangle $ let $p_t =\langle x,y \rangle y + t y$.
	We will show that $p_t$ is the projection of $x + ty$ onto $\stdFace$.
	We have\[
	x+ ty = (x - \inProd{x}{y}y) + p_t = (x - \inProd{x}{y}y) + \underbrace{\inProd{x}{y}y+ty}_{\in \stdFace}.
	\]
	A computation using the fact that $\norm{y} = 1$ shows 
	that $\inProd{x - \inProd{x}{y}y}{p_t} = 0$. Therefore, by the properties of the Moreau decomposition, in order to show that $p_t$ is the desired projection, it suffices to check that 
	$x - \inProd{x}{y}y \in -\stdFace^*$. So let $u \in \stdFace$. In view of \eqref{eq:xyxu} and $\inProd{x}{y} \leq 0$, we have
	\begin{align*}
	\inProd{x - \inProd{x}{y}y}{u} &\leq \norm{u}\inProd{x}{y} - \inProd{x}{y}\inProd{y}{u}\\
	&\leq \inProd{x}{y}(\norm{u}- \inProd{y}{u}) \\
	&\leq 0,
	\end{align*}
	hence, $p_t$ is indeed the projection of $x+ t y $ onto $\stdFace$. 
	
	So now we know that if $0 \leq t \leq -\inProd{x}{y}$ then $0$ is the projection of $x+ty$ onto $\stdFace$. Otherwise, if $t > -\inProd{x}{y}$, then $p_t$ is the projection of $x+ty$ onto $\stdFace$.
	Then, from \eqref{eq:uniformq} for every $y$ as in \eqref{eq:xofy} we have $\langle x, y \rangle \geq \alpha \|x\|$. With that and recalling \eqref{eq:xf}, whenever $0 \leq t \leq - \inProd{x}{y} $ we have
	\begin{align*}
	\dist(x+ty, \stdFace) &= \norm{x+ty} \\
	& \geq \min_{0\leq t\leq - \langle x,y\rangle }\|x + t y \|^2\\
	& = \|x\|^2 - \langle x,y\rangle^2\\
	& \geq \|x\|^2 (1- \alpha^2) \\
	& = \dist(x, \stdFace)^2(1- \alpha^2).
	\end{align*}
	On the other hand, if $t > -\inProd{x}{y}$, we have
	\begin{multline*}
		\dist(x+ty, \stdFace)^2=\|x + t y - p_t \|^ 2 = \|x- \langle x,y \rangle y \|^2 =  \|x\|^2 - \langle x,y\rangle^2 \geq \\ \|x\|^2 (1- \alpha^2) = \dist(x,\stdFace)^2 (1- \alpha^2).
	\end{multline*}
	In combination with \eqref{eq:f_first}, we deduce that for all $ x\in \lspan \stdFace$ and all $t\geq 0$
	\[
	\beta\dist (x+ t y , \stdFace) \geq  \dist (x, \stdFace),
	\]
	where $\beta = \max\{1,\sqrt{1/(1-\alpha^2)}\}$.	
\end{proof}

\begin{theorem}[From compact amenable slices to amenable cones]\label{thm:slice}Let $C $ be a compact convex set contained in the hyperplane 
	\[
	H = \{x\, |\, \langle e, x\rangle = 1\},
	\]
	where $e$ is some nonzero vector in $\ambSpace$. If  $C$ is amenable, then its conic hull $\stdCone = \cone C$ is also amenable.
\end{theorem}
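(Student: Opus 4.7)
The plan is to establish the cone-amenability inequality $\dist(x,\stdFace) \leq \kappa\, \dist(x,\stdCone)$ face by face, with $\kappa$ depending on the face $\stdFace \face \stdCone$. The faces $\{0\}$ and $\stdCone$ are immediate, and $\stdCone$ is pointed because $0 \notin H \supseteq C$. Since both sides of the inequality are positively homogeneous in $x$, it is enough to verify it on the unit sphere of $\lspan \stdFace$.

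For a proper face $\stdFace$ of dimension at least two, I set $F := \stdFace \cap H$. A short verification shows $F$ is a face of $C$ with $\stdFace = \cone F$ and $\aff F = \lspan \stdFace \cap H$. Since $C$ is amenable, Proposition~\ref{prop:am_eq} provides, for each bounded set $B$, a constant $\kappa_B$ with $\dist(z,F) \leq \kappa_B\, \dist(z,C)$ for $z \in \aff F \cap B$. The central strategy is to transport a unit-norm $x \in \lspan \stdFace$ to a point $\tilde x \in \aff F$ lying in a single fixed bounded set, and concatenate
\[
\dist(\tilde x,\stdFace) \leq \dist(\tilde x,F) \leq \kappa_B\, \dist(\tilde x,C) \leq \kappa_B \|e\| r\, \dist(\tilde x,\stdCone),
\]
where $r := \max_{u \in C}\|u\|$ and the last inequality is Proposition~\ref{prop:technicalslice} applied to $\tilde x \in H$.

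The transport splits on the sign of $\inProd{x}{e} - 1$. When $\inProd{x}{e} \leq 1$, I take a unit-norm maximizer $y \in \stdFace$ of $\inProd{x}{\cdot}$ and set $\tilde x := x + t y$ with $t := (1 - \inProd{x}{e})/\inProd{y}{e} \geq 0$, which lands $\tilde x$ in $\aff F$ at bounded distance from the origin. Proposition~\ref{prop:technicalcone} then yields a $\beta$ with $\dist(x,\stdFace) \leq \beta \dist(\tilde x,\stdFace)$ and $\dist(\tilde x,\stdCone) \leq \dist(x,\stdCone)$; combined with the chain above, this gives $\dist(x,\stdFace) \leq \beta\kappa_B\|e\|r\, \dist(x,\stdCone)$. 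When $\inProd{x}{e} > 1$, I instead rescale to $\tilde x := x/\inProd{x}{e} \in \aff F$ and use positive homogeneity of the two cone distances to obtain the analogous bound without the factor $\beta$. The unit-norm normalization of $x$ forces $\|\tilde x\| \leq 1 + (1+\|e\|)r$ in both cases, so $\kappa_B$ can be fixed independently of $x$.

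The main obstacle is the degenerate situation $\tilde x \in -\stdCone^*$, where Proposition~\ref{prop:technicalslice} does not apply. Here I note that $-\stdCone^* \subseteq -\stdFace^*$ (since $\stdFace \subseteq \stdCone$), so the projections of $\tilde x$ onto both $\stdFace$ and $\stdCone$ are the origin, and $\dist(\tilde x,\stdFace) = \|\tilde x\| = \dist(\tilde x,\stdCone)$ closes both sub-cases. The one-dimensional case, an extreme ray $\stdFace = \R_+ v$ not covered by Proposition~\ref{prop:technicalcone}, I handle separately: pointedness of $\stdCone$ forces $\gamma := \sup\{\inProd{-v/\|v\|}{z} \mid z \in \stdCone,\, \|z\|=1\} < 1$, so the elementary estimate $\dist(x,\stdCone)^2 \geq \|x\|^2(1-\gamma^2)$ (for $x \in \lspan\{v\}$) gives the amenability constant $\kappa = 1/\sqrt{1-\gamma^2}$ for that face.
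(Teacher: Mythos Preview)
Your proof is correct and follows essentially the same strategy as the paper: reduce to Propositions~\ref{prop:technicalslice} and~\ref{prop:technicalcone} by transporting $x\in\lspan\stdFace$ to a point of $\aff F = H\cap\lspan\stdFace$, then chain the slice amenability bound with the cone-to-slice comparison. The paper organizes the transport slightly differently---rather than splitting on the sign of $\inProd{x}{e}-1$ and treating $\tilde x\in-\stdCone^*$ as a separate degenerate case, it always translates by $ty$ \emph{and then} rescales, choosing $t$ large enough to force $\inProd{e}{\bar x}>0$ and $\bar x\notin-\stdCone^*$ simultaneously, and it invokes compactness of $C$ to obtain a global amenability constant for $F\face C$ (so no unit-sphere normalization is needed)---but the ingredients and the resulting constant $\beta\kappa\|e\|r$ are the same.
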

\begin{proof}
	Let $\stdFace$ be a face of $\stdCone$. Our goal is to show that there exists a constant $\gamma$ such that 
	\begin{equation}\label{eq:conebound}
	\dist(x,\stdFace)\leq \gamma \dist(x,\stdCone) \quad \forall x\in \lspan \stdFace.
	\end{equation}

	\fbox{Case 1: $\dim \stdFace = 0$} The statement is trivial for $\stdFace = \{0\}$, since in this case $\lspan \stdFace = \stdFace$.

	\fbox{Case 2: $\dim \stdFace = 1$} If $\stdFace$ is one-dimensional and $\stdCone$ is pointed, we have $\stdFace = \cone \{z\}$ for some $z\in  \ambSpace$, $\|z\|=1$, and  $-z\notin \stdCone $. Hence  
	\[
	\gamma : = \dist (-z,\stdCone)>0.
	\]
	In this case for any $x\in \lspan \stdFace$ there is some $\lambda \in \Re$ such that $x = \lambda  z$. Whenever $\lambda\geq 0$, we have $\lambda x \in F \in \stdCone$, and there is nothing to prove. If $\lambda<0$, then  
	\[
	\dist(x,\stdFace) = \|x\| = |\lambda|, \quad \dist(x,\stdCone)  = |\lambda|\dist (-z,\stdCone) = |\lambda | \gamma.
	\]
	Hence we have $\dist(x,\stdFace) \leq \gamma \dist (x,\stdCone)$ for all $x\in \lspan \stdFace$.

	\fbox{Case 2: $\dim \stdFace \geq 2$} It remains to consider the case when $\dim F\geq 2$. There exists a face $E$ of $C$ such that 
	\[
	\stdFace = \cone E, \quad \aff E = \lspan \stdFace\cap H.
	\]
	Since $C$ is amenable and compact, there exists $\kappa>0$ such that
	\begin{equation}\label{eq:004}
	\dist(x,E)\leq \kappa \dist(x,C) \quad \forall x\in \aff E.
	\end{equation}

	By Proposition~\ref{prop:technicalcone} there exists $\beta>0$ such that for any $y$ defined by \eqref{eq:xofy} we have 
	\begin{equation}\label{eq:alpha}
	\dist (x+ t y, \stdCone)\leq \dist (x,\stdCone), \quad \dist(x,\stdFace) \leq \beta \dist (x+ t y, \stdFace),\quad  \forall t\geq 0.
	\end{equation}
	
	Furthermore, since $C$ is compact, there is an $r>0$ be such that 
	\begin{equation}\label{eq:rbound}
	\|u\|\leq r \quad \forall u \in C.
	\end{equation}

	Fix $x\in \lspan \stdFace$. Choose any $y$ satisfying \eqref{eq:xofy}. Since $y\in \stdCone$ and $\|y\|=1$, we have $y = \lambda v$ for some $v\in C$ and $\lambda > 0$.  Hence $\langle e, y\rangle = \lambda \langle e, v\rangle = \lambda > 0 $. Choose any $t$ such that
	\[
	t  > \max\left\{0,-\frac{\langle e,x \rangle }{\langle e,y \rangle }, - \langle x,y\rangle\right\},
	\]
	then for $\bar x = x + t y$ we have 
	$
	\langle e, \bar x\rangle> 0
	$ and  
	\[
	\inProd{\bar x}{y} = \langle x+ t y, y\rangle = \langle x,y\rangle + t \|y\|^2>0,
	\]
	ensuring that $\bar x\notin - \stdCone^*$.
	
	Let $x' = \frac{1}{\langle e,\bar x\rangle} \bar x$. Then $\langle  x',e\rangle =1$, so $x'\in H$, and also $x'\notin -\stdCone^*$, since $\bar x \notin -\stdCone^*$ and $\langle e,\bar x\rangle>0$. We have
	\begin{equation}\label{eq:scale}
	\dist(\bar x, \stdFace)= \langle e,\bar x\rangle\dist(x',\stdFace),\quad  \dist (\bar x, \stdCone) =  \langle e,\bar x\rangle\dist(x',\stdCone).
	\end{equation}
	
	Since $x'\in H\setminus -\stdCone^*$, we can applying Proposition~\ref{prop:technicalslice} to obtain
	\begin{equation}\label{eq:setsineq}
	\dist(x',C)\leq \|e\|r \dist(x',\stdCone),
	\end{equation}
	where $r$ comes from \eqref{eq:rbound}.
	
 From $E\subseteq \stdFace$ we obtain
	\begin{equation}\label{eq:facesineq}
	\dist(x',\stdFace)\leq \dist (x',E).
	\end{equation}

	We have collecting \eqref{eq:alpha}, \eqref{eq:scale} and \eqref{eq:facesineq}
	\begin{equation}\label{eq:002}
	\dist(x,\stdFace)\leq \beta \dist (\bar x, \stdFace) = \beta \langle e,\bar x\rangle\dist(x',\stdFace) \leq \beta \langle e,\bar x\rangle \dist (x',E).
	\end{equation}
	Likewise, from \eqref{eq:alpha}, \eqref{eq:scale} and \eqref{eq:setsineq}
	\begin{equation}\label{eq:003}
	\dist (x,\stdCone) \geq \dist (\bar x, \stdCone) =\langle e,\bar x\rangle\dist(x',\stdCone) \geq \frac{\langle e,\bar x\rangle}{\|e\|r} \dist(x',C). 
	\end{equation}
	Observing that $x'\in \aff E = H \cap \lspan \stdFace$ and combining  \eqref{eq:004}, \eqref{eq:002} and \eqref{eq:003}, we have 
	\[
	\dist(x,\stdFace)\leq  \beta \langle e,\bar x\rangle \dist (x',E)\leq \kappa \beta \langle e,\bar x\rangle \dist (x',C) \leq \kappa  \beta r \|e\|  \dist (x,\stdCone).
	\]
	We conclude that \eqref{eq:conebound} is satisfied with $\gamma =  \beta \kappa  r \|e\|$.
\end{proof}

\section{A nice cone that is not amenable}\label{sec:nicenotamen}

In this section, we produce an explicit example of a closed convex cone in the four-dimensional Euclidean space that is nice but not amenable.

Let $\alpha:[0,2\pi]\rightarrow \RR^3$, $\beta:[0,2\pi]\rightarrow \RR^3$
and $\gamma:[0,\pi]\rightarrow \RR^3$ be defined by
\begin{equation}
\begin{aligned}
\alpha(t) & = (\cos t ,\sin t ,1),\\
\beta(t) & = (\cos t ,\sin t ,-1),\\
\gamma(t) & = \left(2\cos(2t)-1,2\sin(2t),\frac{9}{8} \cos t - \frac{1}{8}\cos(3t)\right).
\end{aligned}\label{eq:curves}
\end{equation}

Throughout this section we use the notation $\alpha$, $\beta$ and $\gamma$ to denote the maps defined in \eqref{eq:curves} and also to denote the sets of points $\alpha([0,2\pi])$, $\beta([0,2\pi])$ and $\gamma([0,\pi])$. The intended meaning should be clear from the context.

We let $\stdCone = \cone (C\times \{1\})$, with $C: = \conv (\alpha \cup \beta \cup \gamma)$. 
The set $C$ is shown in Fig.~\ref{fig:C}.
\begin{figure}[ht]
	\centering\includegraphics[width=0.45\textwidth]{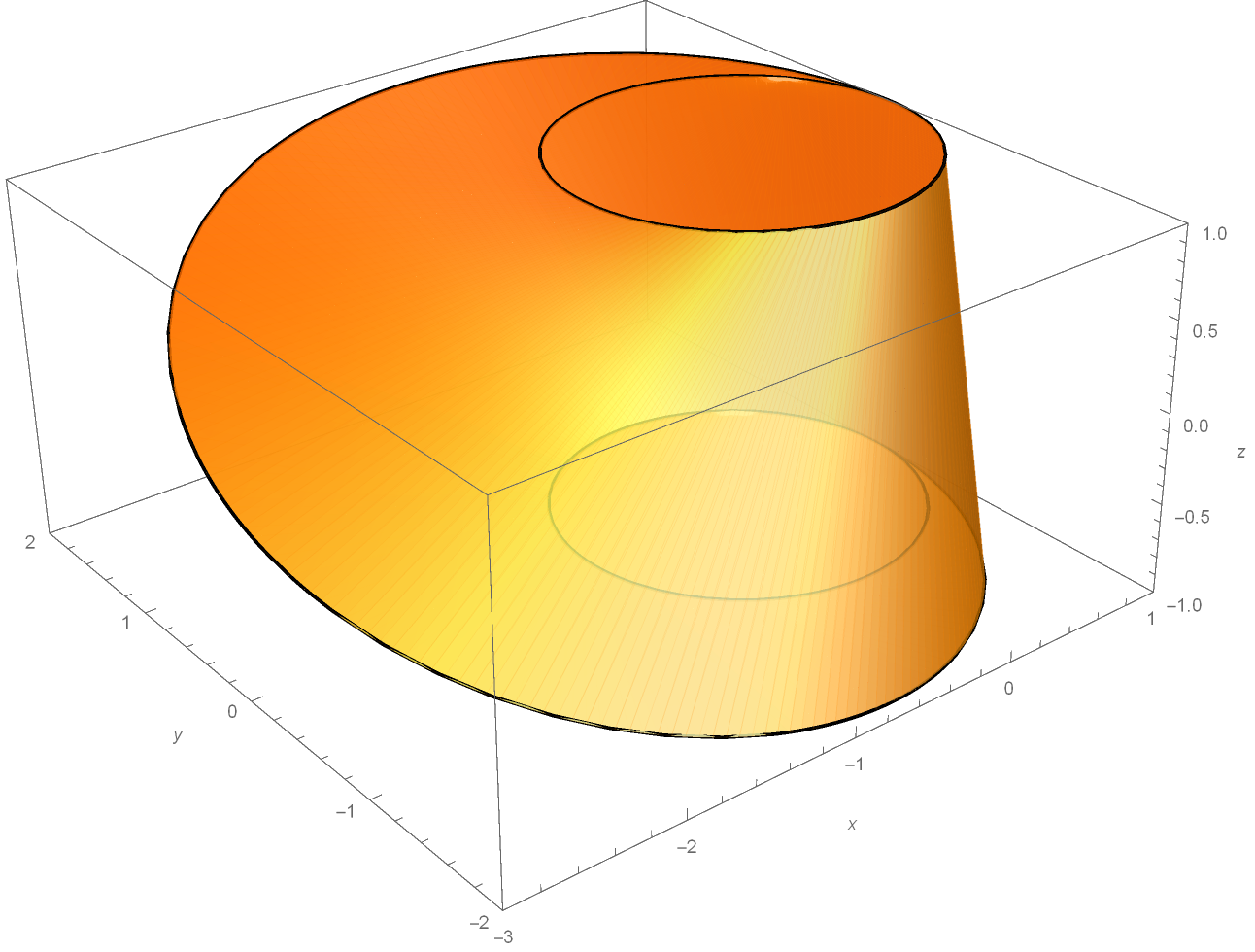}\quad 
	\caption{The set $C$ is the convex hull of the three curves shown in black colour.}
	\label{fig:C}
\end{figure} 

In the next subsection we will prove the following two propositions.

\begin{proposition}\label{prop:fdc} The cone $\stdCone = \cone(C\times\{1\})$ is nice.
\end{proposition}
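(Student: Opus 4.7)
The plan is to verify the defining niceness condition $\stdFace^* = \stdCone^* + \stdFace^\perp$ directly for every face $\stdFace \face \stdCone$. The trivial faces $\{0\}$ and $\stdCone$ itself are immediate, and the inclusion $\stdCone^* + \stdFace^\perp \subseteq \stdFace^*$ holds automatically, so the work is in establishing the reverse inclusion for each non-trivial face.

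The first step is to enumerate the faces of $\stdCone$ via the facial structure of the base $C$. Because $\stdCone = \cone(C\times\{1\})$, non-trivial faces of $\stdCone$ correspond bijectively to non-empty faces of $C$: extreme rays correspond to extreme points of $C$, two-dimensional faces to edges of $C$, and three-dimensional facets to two-dimensional faces of $C$. The extreme points of $C$ lie on the three curves $\alpha, \beta, \gamma$. Since $\alpha$ and $\beta$ trace unit circles at heights $\pm 1$, essentially all their points are extreme unless absorbed into chords coming from $\gamma$, while for $\gamma$ one has to determine which portion actually contributes extreme points. The key observation is that $\gamma(0) = \alpha(0) = (1,0,1)$ and $\gamma(\pi) = \beta(0) = (1,0,-1)$, so the three curves glue into a closed circuit of extreme points. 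The symmetry $(x,y,z)\mapsto (x,-y,-z)$, which swaps $\alpha$ with $\beta$ and sends $\gamma(t)$ to $\gamma(\pi-t)$, cuts the case analysis approximately in half.

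With the face lattice in hand, I would compute $\stdCone^*$ explicitly by identifying the supporting hyperplanes of $C \times \{1\}$ in $\RR^4$, producing a concrete description of its extreme rays. For each non-trivial face $\stdFace$ one can then write down $\stdFace^\perp$ and $\stdFace^*$ and check the inclusion $\stdFace^* \subseteq \stdCone^* + \stdFace^\perp$ as a finite-dimensional algebraic verification. The easier cases (facets and rays generated by generic points of $\alpha$ or $\beta$) reduce to exhibiting a single exposing functional and modifying it by elements of $\stdFace^\perp$ to cover $\stdFace^*$. Facial exposedness, a known necessary condition for niceness, is verified automatically as part of this process.

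The main obstacle will be handling the extreme rays generated by the lifted endpoints $(1,0,\pm 1,1)$ where $\gamma$ meets the two circles, and possibly rays generated by interior points of $\gamma$ whose bitangent behaviour creates tight supporting conditions. These are precisely the locations where the boundary curvature has been engineered so that amenability fails, so the whole point of the construction is that at these special rays the purely algebraic identity $\stdFace^* = \stdCone^* + \stdFace^\perp$ nevertheless survives, even though the quantitative error bound demanded by amenability does not. I would tackle these cases by parametrizing a neighbourhood of each such ray inside $\stdCone^*$ using the explicit formulas for $\gamma$ and the tangent lines to $\alpha, \beta$ at the glueing points, and exhibiting the required decomposition of each element of $\stdFace^*$ directly.
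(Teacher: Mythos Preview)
Your plan has the difficulty of the cases inverted. For an extreme ray $\stdFace=\cone\{x\}$ of a full-dimensional pointed cone, $\stdFace^\perp$ is a hyperplane and $\stdFace^*$ is the closed half-space bounded by it; since $\stdCone^*$ is full-dimensional and contained in $\stdFace^*$, it meets the open half-space, and then $\stdCone^*+\stdFace^\perp$ automatically equals all of $\stdFace^*$. So the rays through the glueing points $(1,0,\pm 1,1)$ are not an obstacle at all---niceness is trivial for every extreme ray, and more generally (via the Pataki criterion the paper invokes) for every polyhedral face, hence for every face of dimension at most two.

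The genuinely hard case is the one you list among the ``easier cases'': the three-dimensional facets. Take $\stdFace=\cone(\stdFaceC_\alpha\times\{1\})$. Here $\stdFace^\perp$ is the \emph{line} spanned by $(0,0,1,-1)$, while $\stdFace^*=\{(x,y,z,w):\sqrt{x^2+y^2}\le z+w\}$ is a full four-dimensional second-order cone. One exposing functional plus a one-dimensional $\stdFace^\perp$ cannot possibly cover this set; you must show that \emph{every} boundary point of $\stdFace^*$ arises as an element of $\stdCone^*$ shifted along that line. The paper does this not by describing $\stdCone^*$ (which has no explicit description, since the boundary of $C$ is given only implicitly through the three curves) but by comparing with the enclosed cylinder $\tilde\stdCone=\cone(\conv(\alpha\cup\beta)\times\{1\})$: it computes $\tilde\stdCone^*+\stdFace^\perp$ explicitly as a closed second-order cone equal to $\stdFace^*$, and then, using the exposing normals $p(t)=(\cos t,\sin t,u(t))$ constructed when proving each $\alpha(t)$ is an exposed extreme point, writes every boundary point of that cone as a nonnegative multiple of $(-\cos t,-\sin t,-u(t),1+u(t))\in\stdCone^*$ plus a multiple of $(0,0,1,-1)$. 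Your proposal would need this comparison-with-the-cylinder idea, or something equivalent, to handle the disk facets; as written it does not.
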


\begin{proposition}\label{prop:notamen} The cone $\stdCone$ is not amenable.
\end{proposition}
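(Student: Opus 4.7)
The plan is to reduce the problem to showing that the compact convex set $C$ is not amenable, and then exhibit a face of $C$ at which the amenability inequality \eqref{eq:am_def} fails. The slice $\stdCone \cap \{w = 1\} = C \times \{1\}$, where $w$ denotes the last coordinate, is the intersection of $\stdCone$ with a polyhedral set. By Proposition~\ref{prop:slices}, if $\stdCone$ were amenable, then so would be $C \times \{1\}$, and by item~\eqrefit{prop:am_int:3} of Proposition~\ref{prop:am_int} this is in turn equivalent to the amenability of $C$ itself (via the injective affine map $c \mapsto (c,1)$). So it suffices to find a face $F \face C$ and a bounded sequence in $\aff F$ witnessing the failure of \eqref{eq:am_def}.

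The face to focus on is the top face $F_\alpha := C \cap \{z = 1\}$, exposed by the hyperplane $\{z = 1\}$. To verify that $\{z = 1\}$ supports $C$, use the identity $\cos(3t) = 4\cos^3 t - 3 \cos t$ to rewrite the third component of $\gamma$ as $z_\gamma(t) = \tfrac{3}{2}\cos t - \tfrac{1}{2}\cos^3 t$; substituting $u = \cos t \in [-1,1]$, the function $\tfrac{3}{2} u - \tfrac{1}{2} u^3$ is nondecreasing with maximum $1$ at $u = 1$. Combined with $z \equiv 1$ on $\alpha$ and $z \equiv -1$ on $\beta$, this gives $z \leq 1$ on $C$, and the points of $C$ at height $z = 1$ form exactly $\conv(\alpha)$, the closed unit disk in the plane $\{z = 1\}$. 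In particular, $\aff F_\alpha = \{z = 1\}$.

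For the violating sequence take $p(t) := (2\cos 2t - 1,\, 2\sin 2t,\, 1) \in \aff F_\alpha$ for small $t > 0$. Since
\[
(2\cos 2t - 1)^2 + (2\sin 2t)^2 = 5 - 4\cos 2t = 1 + 8 t^2 + O(t^4) > 1,
\]
the projection of $p(t)$ onto the closed unit disk $F_\alpha$ lies on its boundary circle, and
\[
\dist(p(t), F_\alpha) = \sqrt{5 - 4\cos 2t} - 1 = 4 t^2 + O(t^4).
\]
On the other hand $\gamma(t)$ has the same first two coordinates as $p(t)$, hence
\[
\dist(p(t), C) \leq \dist(p(t), \gamma(t)) = 1 - z_\gamma(t).
\]
Taylor expanding via $\cos t = 1 - t^2/2 + t^4/24 + O(t^6)$ gives $z_\gamma(t) = 1 - \tfrac{3}{8} t^4 + O(t^6)$, so $\dist(p(t), C) = O(t^4)$. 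The ratio $\dist(p(t), F_\alpha)/\dist(p(t), C)$ therefore blows up like $1/t^2$ as $t \to 0^+$, and since $\{p(t) : t \in [0,\pi/2]\}$ is bounded, no single constant $\kappa$ can satisfy \eqref{eq:am_def} on a bounded set $B$ containing these points. Hence $F_\alpha$ is not an amenable face of $C$, and therefore $\stdCone$ is not amenable.

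The only delicate ingredient is the fourth-order tangency $z_\gamma(t) = 1 - \tfrac{3}{8} t^4 + O(t^6)$: the distance from $p(t)$ to $F_\alpha$ is of order $t^2$ while the distance to $C$ is of order $t^4$, and this gap is precisely what the unusual coefficients $9/8$ and $1/8$ in the definition of $\gamma$ are engineered to produce---a naive parameterization giving only quadratic tangency would fail to break amenability.
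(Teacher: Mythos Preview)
Your proof is correct and follows essentially the same approach as the paper's: reduce to non-amenability of the slice $C$, take the disk face $F_\alpha=\conv\alpha$, and use the test points $p(t)=(2\cos 2t-1,\,2\sin 2t,\,1)$ to show $\dist(p(t),F_\alpha)\sim 4t^2$ while $\dist(p(t),C)\le \dist(p(t),\gamma(t))\sim \tfrac{3}{8}t^4$. The paper performs the same computation with squared distances (obtaining leading terms $16t^4$ and $\tfrac{9}{64}t^8$), and invokes item~\eqrefit{prop:am_int:1} of Proposition~\ref{prop:am_int} rather than Proposition~\ref{prop:slices} for the reduction, but these differences are purely cosmetic.
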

With that, we have the following theorem.
\begin{theorem}\label{thm:fdcnotam} There exists a nice cone $\stdCone\subseteq \R^4$ that is not amenable.
\end{theorem}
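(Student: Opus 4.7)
The proof of Theorem~\ref{thm:fdcnotam} is essentially bookkeeping once Propositions~\ref{prop:fdc} and~\ref{prop:notamen} are in hand. The cone $\stdCone = \cone(C \times \{1\})$ constructed just above the theorem sits inside $\R^4$, because $C = \conv(\alpha \cup \beta \cup \gamma) \subseteq \R^3$ and hence $C \times \{1\} \subseteq \R^4$. Proposition~\ref{prop:fdc} supplies niceness of $\stdCone$, Proposition~\ref{prop:notamen} supplies the failure of amenability, and together they exhibit $\stdCone$ as the required witness. I would also point out that, by Proposition~\ref{prop:imp}, no such separation can occur in dimension three or lower, so four is the smallest ambient dimension in which this pathology can arise --- which is exactly why the construction targets $\R^4$.

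All of the real content has been absorbed into Propositions~\ref{prop:fdc} and~\ref{prop:notamen}, and my plan for the theorem itself is simply to cite them. For niceness, I would catalogue the proper faces of $C$ --- these correspond to extreme points lying on the three curves together with chords and tangent segments produced by the convex hull --- lift them to faces of $\stdCone$, and for each such face $\stdFace$ verify $\stdFace^* = \stdCone^* + \stdFace^\perp$ by exhibiting explicit dual certificates, exploiting the rotational symmetry of $\alpha$ and $\beta$ about the vertical axis. For non-amenability, I would pick the face of $\stdCone$ generated by a point where $\gamma$ meets $\alpha$ or $\beta$ (at the parameter values $t=0$ and $t=\pi$ the $xy$ and $z$ coordinates of $\gamma$ coincide with $\alpha(0)$ and $\beta(0)$, respectively) and produce, in analogy with Example~\ref{ex:am_bd}, a sequence in its linear span whose distance ratios $\dist(\cdot,\stdFace)/\dist(\cdot,\stdCone)$ blow up.

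The main obstacle lies not in Theorem~\ref{thm:fdcnotam} itself but in tuning the curve $\gamma$: the cubic-in-$\cos t$ vertical coordinate $\tfrac{9}{8}\cos t - \tfrac{1}{8}\cos(3t)$ must simultaneously make $C$ facially exposed, so that $\stdCone$ has a chance to be nice, and create a high-order tangential contact with the circles $\alpha$ and $\beta$, so that the quadratic-versus-linear mismatch characteristic of non-amenable faces (exactly as in Example~\ref{ex:am_bd}) appears. Balancing these two competing requirements --- facial exposedness of every face, yet a degenerate contact at one specific face --- is where the technical delicacy of the construction lives, and is where I expect the genuinely difficult verifications to occur in the proofs of Propositions~\ref{prop:fdc} and~\ref{prop:notamen}.
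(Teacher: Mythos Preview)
Your proof of Theorem~\ref{thm:fdcnotam} itself is correct and identical to the paper's: once Propositions~\ref{prop:fdc} and~\ref{prop:notamen} are granted, the cone $\stdCone = \cone(C\times\{1\}) \subseteq \R^4$ is the witness, and there is nothing further to do.

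Your informal sketches of how the two propositions would go, however, diverge from the paper in ways worth flagging. For non-amenability you propose to work with an extreme ray at the contact point $\gamma(0)=\alpha(0)$; the paper instead takes the \emph{disk face} $\stdFaceC = \conv\alpha$ and exhibits points $w(t)\in\aff\stdFaceC$ with $\dist(w(t),C)^2 = O(t^8)$ while $\dist(w(t),\stdFaceC)^2 \asymp t^4$, a quartic-versus-octic gap rather than the linear-versus-quadratic picture you describe. Using an extreme ray would be problematic: one-dimensional faces of a facially exposed pointed cone are automatically amenable by the argument in Case~2 of Theorem~\ref{thm:slice}, so the pathology cannot live there. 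For niceness, the paper does not verify $\stdFace^* = \stdCone^* + \stdFace^\perp$ face by face via explicit dual certificates; it first establishes facial exposedness of $C$ (showing in particular that the only two-dimensional faces of $C$ are the two disks, so there are no ``chords and tangent segments'' to worry about as subfaces of higher faces), invokes Pataki's criterion (Theorem~\ref{thm:pataki}) to dispatch all faces of dimension at most two, and then handles only the two three-dimensional disk-cone faces by comparing $\stdCone^*+\stdFace^\perp$ with the corresponding quantity for the enclosing cylinder cone. The relevant symmetry is $T(x,y,z)=(x,-y,-z)$ swapping the two disks, not rotational symmetry about the vertical axis.
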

\begin{proof}[Proof of Theorem~\ref{thm:fdcnotam}] Follows directly from Propositions~\ref{prop:fdc} and~\ref{prop:notamen}.
\end{proof}

\subsection{Niceness}
\begin{proposition}\label{prop:extreme} Extreme points of $C$ are precisely $\alpha \cup \beta\cup \gamma$. Every extreme point is  exposed. 
\end{proposition}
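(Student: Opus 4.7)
The plan is in two parts. First, since $\alpha\cup\beta\cup\gamma$ is a finite union of continuous images of compact intervals it is compact, so $C$ is compact and the standard fact that extreme points of the convex hull of a compact set lie in that set gives $\ext C\subseteq \alpha\cup\beta\cup\gamma$. Second, to establish the reverse inclusion together with the exposedness claim at once, I would produce for every point of $\alpha\cup\beta\cup\gamma$ an explicit linear functional on $\R^3$ uniquely maximized over $C$ at that point. Since an exposed point is automatically extreme, both halves of the proposition follow.

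For a point $\alpha(t)$ on the upper circle I would use the ansatz $\phi_{t,a}(x,y,z) = x\cos t + y\sin t + az$ with $a>0$. The comparison with $\alpha$ reduces to $\cos(s-t)\leq 1$ (strict for $s\ne t$), and the comparison with $\beta$ to $\cos(s-t)-a<1+a$ (strict for any $a>0$), so the crux is controlling $\phi_{t,a}$ on $\gamma$. That amounts to showing that
\[
M(t) \coloneqq \sup_{s\in(0,\pi]} \frac{2\cos(2s-t)-\cos t - 1}{1-z_\gamma(s)}
\]
is finite, where $z_\gamma(s) = \tfrac{9}{8}\cos s - \tfrac{1}{8}\cos 3s = \tfrac{3}{2}\cos s - \tfrac{1}{2}\cos^3 s$; then any $a > M(t)$ furnishes an exposing functional. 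Finiteness is the heart of the argument, since $\alpha$ and $\gamma$ meet tangentially at $\alpha(0) = \gamma(0)$, forcing $1 - z_\gamma(s) = \tfrac{3s^4}{8} + O(s^6)$ to vanish to fourth order at $s = 0$. On any interval $[\epsilon,\pi]$ the ratio is continuous, while the expansion $2\cos(2s-t)-\cos t - 1 = (\cos t - 1) + 4s\sin t + O(s^2)$ has negative leading term for $t\ne 0$, so the ratio tends to $-\infty$ rather than $+\infty$ as $s\to 0^+$. The special case $t=0$ is handled directly: the constraint on $\gamma$ reduces to $-4\sin^2 s \leq a(1-z_\gamma(s))$, which is strict on $(0,\pi]$ for any $a > 0$.

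Exposedness of points on $\beta$ then follows from the involution $(x,y,z)\mapsto (x,-y,-z)$, which one verifies preserves $C$ by swapping $\alpha$ and $\beta$ and reparametrizing $\gamma$ via $t\mapsto \pi-t$ (using the identities $\cos(\pi-s) = -\cos s$, $\cos(3\pi-3s) = -\cos 3s$). Finally, for an interior point $\gamma(t)$ with $t\in(0,\pi)$, I would exploit the geometry of the $xy$-projection: the projection of $\gamma$ is the circle $(x+1)^2 + y^2 = 4$, which contains the unit circle (the projection of $\alpha\cup\beta$) and is internally tangent to it at $(1,0)$. This motivates the $z$-independent functional $\psi_t(x,y,z) = x\cos 2t + y\sin 2t$, for which a direct computation gives $\psi_t(\gamma(t)) = 2-\cos 2t$, $\psi_t(\gamma(s)) = 2\cos(2s-2t) - \cos 2t$ (strictly smaller for $s\ne t$), and $\psi_t(\alpha(s)) = \psi_t(\beta(s)) = \cos(s-2t) \leq 1 < 2-\cos 2t$ since $\cos 2t < 1$ for $t\in(0,\pi)$; hence $\psi_t$ exposes $\gamma(t)$. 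The endpoints $\gamma(0)=\alpha(0)$ and $\gamma(\pi)=\beta(0)$ are already covered above. The main obstacle is the finiteness of $M(t)$, which is delicate precisely because of the fourth-order tangency between $\alpha$ and $\gamma$ at $(1,0,1)$; this same tangency will be the geometric feature exploited in Proposition~\ref{prop:notamen} to show that $\stdCone$ fails to be amenable.
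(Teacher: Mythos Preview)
Your proof is correct and follows essentially the same route as the paper: the projection of $\gamma$ to the $xy$-plane is used to expose interior points of $\gamma$, the ansatz $(\cos t,\sin t,a)$ with $a>0$ is used for $\alpha$, and the involution $(x,y,z)\mapsto(x,-y,-z)$ transfers the result to $\beta$. Your treatment of the finiteness of $M(t)$ is in fact slightly more careful than the paper's (and your numerator $2\cos(2s-t)-\cos t-1$ is the correct expression; the paper writes $2\cos(t-s)$, which appears to be a typo).
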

\begin{proof} From the definition of $C$ we have $\ext C \subseteq \alpha\cup\beta \cup \gamma$. Also note that $\gamma(0)=\alpha (0) = \alpha(2 \pi)$ and $\gamma(2\pi) = \beta(0)=\beta(2 \pi)$. We will first show that for every $t\in (0,2\pi)$ the point $\gamma(t)$ is exposed, and then that each point on $\alpha$ and $\beta$ is also exposed. 
	
	Since the projection of $\gamma$ onto the $xy$-plane is the circle of radius 2 centred at $(-1,0)$, every line that exposes points on this circle as faces of the relevant disk lifts to a plane that likewise exposes individual points $\gamma(t)$ for $t\in (0,2\pi)$. 
	
	The points $\alpha(0)=\alpha(2 \pi )$ and $\beta(0) = \beta(2 \pi)$ are exposed by any plane that exposes them as faces of the cylinder $S = \{(x,y,z)\,|\, x^2+(y-1)^2\leq 4, -1\leq z\leq 1 \}$ that includes $C$ as its subset.
	
	For the remaining points on $\alpha$ and $\beta$ we first observe that due to symmetry it is sufficient to show that $\alpha(t)$ is exposed for all $t\in (0,2\pi)$ (indeed, observe that the linear isometry $T(x,y,z) = (x,-y,-z)$ maps the set $C$ onto itself, swapping the bases: we have  $\beta(2\pi-t) = T(\alpha(t))$ for $t\in [0,2\pi]$ and $\gamma(\pi -t) = T(\gamma(t))$ for $t\in [0,\pi]$). 	The exposing normals have the expression $	p(t) = (\cos t, \sin t, u(t))$,
	where $u(t)$ is chosen in such a way that all points of $\alpha,\beta, \gamma$ except for $\alpha(t)$ lie in the negative half-space defined by the plane via $\alpha(t)$ with positive normal $p(t)$. Explicitly, the following relations must be satisfied,
	\begin{equation}\label{eq:expalpha}
	\langle p(t), \alpha(s)\rangle < \langle p(t), \alpha(t)\rangle  \quad \forall s\in [0,2 \pi ]\setminus \{t\}, \quad \forall t\in (0,2\pi);
	\end{equation}
	\begin{equation}\label{eq:expbeta}
	\langle p(t), \beta(s)\rangle <\langle  p(t), \alpha(t)\rangle  \quad \forall s\in [0,2 \pi ],\quad \forall t\in (0,2 \pi);
	\end{equation}
	\begin{equation}\label{eq:expgamma}
	\langle p(t), \gamma(s)\rangle < \langle p(t), \alpha(t)\rangle  \quad \forall s\in [0,2 \pi ], \quad \forall t\in (0,2 \pi).
	\end{equation}
	Observe that \eqref{eq:expalpha} can be explicitly written as 
	\[
	\cos t \cos s + \sin t \sin s + u(t) < 1+ u(t) \quad \forall s \in [0,2\pi]\setminus \{t\}, \quad \forall t\in (0,2 \pi),
	\]
	or equivalently $\cos(t-s)<1$. Since $s-t\in (-2\pi,0)\cup (0,2\pi)$, this is satisfied for any choice of the function $u$.
	
	From \eqref{eq:expbeta} we have the requirement
	\[
	u(t) >\frac{\cos(t-s)-1}{2}.
	\]
	Since $\cos (t-s)-1\leq 0$, it is sufficient to require that $u(t)>0$.

	Finally, equation \eqref{eq:expgamma} can be written explicitly as 
	\[
	(1-z(s))u(t) > 2 \cos(t-s)- \cos t -1,
	\]
	where $z(s) = \frac{9}{8} \cos s - \frac{1}{8}\cos(3s )$ is the last component of $\gamma(s)$. For $s=0$ this is satisfied trivially, and for $s\neq 0$ we have
	\[
	u(t) > \frac{2 \cos(t-s)- \cos t -1}{1-z(s)}.
	\]
	The function on the right-hand side is continuous on $(0,2\pi]$ and goes to $-\infty$ when $s\to 0_+$. Hence it must attain a maximum on $(0,2\pi]$. We can set $u(t)$ to be positive and larger than this maximum. We conclude that any point on the curve $\alpha$ is an exposed face of $C$. 
\end{proof}

\begin{proposition}\label{prop:threedimf} The only two-dimensional faces of $C$ are the disk faces 
	\[
	\stdFaceC_\alpha:= \conv \alpha,\qquad \stdFaceC_\beta = \conv \beta.
	\]
	These faces are exposed.
\end{proposition}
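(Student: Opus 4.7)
The plan is to first verify that $\stdFaceC_\alpha$ and $\stdFaceC_\beta$ are exposed two-dimensional faces, and then to rule out any other two-dimensional face by case analysis on the exposing hyperplane. For exposure, the hyperplane $H_\alpha := \{z = 1\}$ supports $C$: along $\gamma$ the $z$-coordinate $z(t) = \frac{3}{2}\cos t - \frac{1}{2}\cos^3 t$ has derivative $z'(t) = -\frac{3}{2}\sin^3 t \leq 0$ on $[0,\pi]$, so $z$ decreases from $1$ at $t=0$ to $-1$ at $t=\pi$, attaining $1$ only at $t = 0$. Combined with $z = 1$ on $\alpha$ and $z = -1$ on $\beta$, this yields $H_\alpha \cap C = \conv(\alpha \cup \{\gamma(0)\}) = \stdFaceC_\alpha$. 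The reflection $(x,y,z) \mapsto (x,-y,-z)$ preserves $C$ (it swaps $\alpha$ and $\beta$ and sends $\gamma(t)$ to $\gamma(\pi - t)$), so $H_\beta := \{z = -1\}$ analogously exposes $\stdFaceC_\beta$.

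Now let $F$ be any two-dimensional face of $C$. Since $\dim C = 3$ and every proper face is contained in a proper exposed face (Proposition~\ref{prop:exp_face}), $F$ itself is exposed; write $F = C \cap H$ for some supporting hyperplane $H$ with normal $n = (a,b,c)$. If $H \cap \alpha$ contains two distinct points, their chord lies in $H \cap \stdFaceC_\alpha$, which is a face of the disk $\stdFaceC_\alpha$; but a planar disk has no one-dimensional face, so this forces $H = H_\alpha$ and $F = \stdFaceC_\alpha$. Symmetrically $|H \cap \beta| \geq 2$ yields $F = \stdFaceC_\beta$. So it remains to show that $|H \cap \alpha|, |H \cap \beta| \leq 1$ cannot produce $\dim F = 2$.

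Set $r := \sqrt{a^2 + b^2}$. If $r = 0$, $H$ is horizontal and already handled. Otherwise, writing $(a, b) = r(\cos s, \sin s)$, the maxima of $\langle n,\cdot\rangle$ over $\alpha$ and $\beta$ are $r + c$ and $r - c$, attained uniquely at $\alpha(s)$ and $\beta(s)$. If both lie in $H$, then $c = 0$ and $H$ is vertical; since the projection of $C$ onto the $xy$-plane is the strictly convex disk $\{(x,y) : (x+1)^2 + y^2 \leq 4\}$, the supporting line obtained by projecting $H$ meets this disk in a single point, so $F$ lies in a vertical line and $\dim F \leq 1$, a contradiction. Using the symmetry $(x,y,z)\mapsto(x,-y,-z)$ I may assume $c > 0$; then $d \geq r + c > r - c$ forces $H \cap \beta = \emptyset$, and the remaining possibilities are $H \cap \alpha = \{\alpha(s)\}$ (so $d = r + c$) or $H \cap \alpha = \emptyset$ (so $d > r + c$).

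The hardest step, and the main obstacle, is to show that in this configuration $H \cap \gamma$ contributes at most one further extreme point of $F$, so that $|\ext F| \leq 2$ and $\dim F \leq 1$. I will study the non-negative function $\psi(t) := d - \langle n, \gamma(t)\rangle$ on $[0,\pi]$, whose zero set is exactly $H \cap \gamma$. Using the factorisation $1 - z(t) = \frac{1}{2}(\cos t - 1)^2(\cos t + 2)$, in the subcase $H \cap \alpha = \{\alpha(s)\}$ one computes
\[
\psi(t) = r\bigl(1 + \cos s - 2\cos(2t - s)\bigr) + \frac{c}{2}(\cos t - 1)^2(\cos t + 2), \qquad \psi'(t) = 4r\sin(2t - s) + \frac{3c}{2}\sin^3 t.
\]
For $s = 0$ this is a sum of non-negative terms vanishing only at $t = 0$, so $H \cap \gamma = \{\gamma(0)\} = H \cap \alpha$ and $\ext F$ is a single point, contradicting $\dim F = 2$. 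For $s \neq 0$ one verifies $\psi(0) = r(1 - \cos s) > 0$ and $\psi(\pi) = r(1 - \cos s) + 2c > 0$, so any zero of $\psi$ is interior and, since $\psi \geq 0$, a double zero. A careful examination of the critical points of $\psi$ via $\psi' = 0$—with a parallel analysis applying in the subcase $H \cap \alpha = \emptyset$ (where $d > r + c$ shifts $\psi$ upward by a positive constant)—will show that $\psi$ admits at most one such double zero on $(0,\pi)$, bounding $|\ext F| \leq 2$ and yielding $\dim F \leq 1$, the desired contradiction.
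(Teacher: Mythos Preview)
Your overall strategy is sound, and some of your case reductions are cleaner than the paper's (for instance, your projection argument for the $c=0$ case via the strictly convex disk $\{(x+1)^2+y^2\le 4\}$ is neater than the paper's cylinder argument). The exposure of $\stdFaceC_\alpha,\stdFaceC_\beta$, the observation that any two-dimensional face of a three-dimensional set is automatically exposed, the disk-face reduction when $|H\cap\alpha|\ge 2$, and the computations of $\psi$ and $\psi'$ are all correct.

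However, there is a genuine gap at the decisive step. You assert that ``a careful examination of the critical points of $\psi$ via $\psi'=0$ \dots\ will show that $\psi$ admits at most one such double zero on $(0,\pi)$'', but you do not carry this out, and it is not at all clear that a direct critical-point count for $\psi'(t)=4r\sin(2t-s)+\tfrac{3c}{2}\sin^3 t$ succeeds without substantial further work. This is precisely the heart of the proposition, and without it the proof is incomplete.

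The paper closes exactly this gap, but by a different (and more direct) device than critical-point counting. It notes that if a supporting hyperplane $H$ contains two interior points $\gamma(t_1),\gamma(t_2)$ of the curve, then---since these are double zeros of your $\psi$---the tangent lines at both points lie in $H$ as well. Hence the three vectors $\gamma(t_1)-\gamma(t_2),\ \gamma'(t_1),\ \gamma'(t_2)$ lie in the two-dimensional direction space of $H$ and must be linearly dependent. The paper then computes the $3\times 3$ determinant explicitly: after the substitution $x=(t_1+t_2)/2$, $y=(t_2-t_1)/2$ it factors as
\[
-32\cos(y)\sin(x)\sin(y)^4\bigl[6+3\cos(2x)+\cos(2(x-y))+\cos(2y)+\cos(2(x+y))\bigr],
\]
where the bracketed term is shown to be $\ge 2$ everywhere, and the remaining trigonometric factors cannot vanish for $0<t_1<t_2<\pi$. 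This single determinant computation replaces your unfinished critical-point analysis; your setup with $\psi$ leads naturally to it (double zero $\Leftrightarrow$ tangent line contained in $H$), and plugging it in would complete your argument.
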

\begin{proof}  We first show that $\stdFaceC_\alpha$ and $\stdFaceC_\beta$ are exposed faces of $C$. 
Observe that 
	\[
	\langle \alpha(t), (0,0,1)\rangle = 1 \quad \forall t \in [0,2\pi];
	\]
	\[
	\langle \beta(t), (0,0,1)\rangle = -1 <1 \quad \forall t \in [0,2\pi]; 
	\]
	\[
	\langle  \gamma(t), (0,0,1)\rangle  <1 \quad \forall t \in (0,2\pi).
	\]
	We deduce that the plane $H$ defined by  $z=1$ supports $C$, and that (invoking Proposition~\ref{prop:extreme}) $\ext C \cap H = \alpha$, hence, $\conv \alpha$ is an exposed face of $C$. The proof for $\stdFaceC_\beta$ and the plane $z = -1$ is analogous.

	To show that there are no other two-dimensional faces, assume that $\stdFaceC$ is a two-dimensional face of $C$. Then $\stdFaceC$ must contain at least three affinely independent points of $\ext C = \alpha\cup \beta\cup \gamma$ (see Proposition~\ref{prop:extreme}). In the case when at least two of these points belong to either $\alpha$ or $\beta$, the line segment connecting these two points intersects the relative interior of one of the disk faces, and hence the entire face $\stdFaceC$ must include this disk face, which means that the face $\stdFaceC$ coincides with either $\stdFaceC_\alpha$ or $\stdFaceC_\beta$. Therefore, for $\stdFaceC$ to be different to $\stdFaceC_\alpha$ or $\stdFaceC_\beta$ each of the curves $\alpha$ and $\beta$ can have at most one of these three affinely independent points. 
	
	Suppose that $\alpha$ and $\beta$ contain at least one point each, and consider the cylinder $\conv(\alpha \cup \beta)$ that is a subset of $C$. Note that since the interior of this cylinder is nonempty, the set $C$ is also three-dimensional.  If these points have a different projection onto the $xy$ plane, then the line segment connecting them intersects the interior of the aforementioned cylinder, hence, it intersects the interior of $C$, and the face $\stdFaceC$ has to be three-dimensional, a contradiction.

	If these two points correspond to the same value of the parameter $t$, then any supporting plane to $C$ that contains these two points must also be supporting to the cylinder. The only one such supporting plane that does not cut through the rest of the set $C$ is $x=2$, corresponding to the value $t=0$. This plane only contains two points of $\ext C$, $\alpha(0)= \gamma(0)$ and $\beta(0) = \gamma(2 \pi)$, hence, this plane can not be exposing a two-dimensional face. 
	
	We conclude that at most one of the three points lies on $\alpha\cup \beta$, and hence at least two different points must be on $\gamma\setminus (\alpha\cup \beta)$. Suppose that these points are $\gamma(t)$ and $\gamma(s)$, where $0< t<s< 2\pi$.  We will show that this arrangement is also impossible.
	
	Assume the contrary. Then $\gamma(t)$ and $\gamma(s)$ belong to some two-dimensional face $\stdFaceC\face C$, and there must be a plane exposing $\stdFaceC$; this plane must contain these two points. This plane must also contain the tangent lines $\gamma(t)+\R\gamma'(t)$  and $\gamma(s)+\R \gamma'(s)$. This is only possible if the vectors $\gamma(t) - \gamma(s), \gamma'(t), \gamma'(s)$ are linearly dependent. 
	
	Let 
	\[ M = \begin{bmatrix} \gamma(t) - \gamma(s) & \gamma'(t) & \gamma'(s)\end{bmatrix}.\]
	We would like to understand when this vanishes for $0<t<s < \pi$. After the change of variables $x = (s+t)/2$ and $y = (s-t)/2$,
	\[ \det(M) = -32\cos(y)\sin(x) \sin(y)^4\left[6+3\cos(2x)+\cos(2(x-y))+\cos(2y) + \cos(2(x+y))\right].\]
	Furthermore, we have that 
	\begin{align*}
	&	6+3\cos(2x) + \cos(2(x-y)) + \cos(2y) + \cos(2(x+y)) \\
	&= 2 + 4\cos(x)^2 + 6 \cos(x)^2\cos(y)^2 + 2\sin(x)^2\sin(y)^2\\
	& \geq 2\quad\textup{for all $x,y$.}
	\end{align*}
	Therefore, the only way that $\det(M)$ can vanish is if
	either $\cos(y) = 0$ or $\sin(x) = 0$ or $\sin(y)=0$.
	\begin{itemize}
		\item $\sin(y)=0$ if and only if $s-t$ is an integer multiple of $2\pi$. 
		Since $s,t\in (0,\pi)$, this is impossible. 
		\item $\sin(x)=0$ if and only if $s+t$ is an integer multiple of $2\pi$. Since $s,t\in (0,\pi)$, 
		this is impossible. 
		\item $\cos(y)=0$ if and only if $s-t$ is an odd multiple of $\pi$. Since $s,t\in (0,\pi)$, this 
		is impossible. 
	\end{itemize}

\end{proof}

\begin{proposition}\label{prop:fexp} The cone $\stdCone = \cone (C\times \{1\})$ is facially exposed.
\end{proposition}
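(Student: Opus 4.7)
My plan is to first reduce the claim to showing that every face of the compact base $C$ is exposed, and then check $0$-, $1$-, and $2$-dimensional faces of $C$ separately. Because $C$ is compact and $C \times \{1\}$ avoids the origin, $\stdCone = \cone(C \times \{1\})$ is closed, pointed and full-dimensional in $\R^4$, and its proper faces are exactly $\{0\}$ together with $\cone(\stdFaceC \times \{1\})$ for each proper face $\stdFaceC$ of $C$. Moreover, $\cone(\stdFaceC \times \{1\})$ is exposed in $\stdCone$ if and only if $\stdFaceC$ is exposed in $C$: a supporting hyperplane $\{x : \inProd{a}{x} = b\}$ exposing $\stdFaceC$ in $C$ lifts to the supporting hyperplane $\{(x,t) : \inProd{a}{x} = bt\}$ of $\stdCone$ exposing $\cone(\stdFaceC \times \{1\})$, and conversely; while $\{0\}$ is exposed by pointedness. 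It therefore suffices to show that every proper face of $C$ is exposed.

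Since $\dim C = 3$, proper faces of $C$ have dimension $0$, $1$, or $2$. Proposition~\ref{prop:extreme} establishes that every $0$-dimensional face is exposed, and Proposition~\ref{prop:threedimf} establishes both that the only $2$-dimensional faces are $\stdFaceC_\alpha = \conv\alpha$ and $\stdFaceC_\beta = \conv\beta$, and that each of these is exposed. This leaves only the $1$-dimensional faces to treat.

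For these I would appeal to Proposition~\ref{prop:exp_face}: if $e$ is a $1$-dimensional face of $C$, there is a proper exposed face $e'$ of $C$ with $e \subseteq e'$, and necessarily $\dim e' \in \{1,2\}$. If $\dim e' = 2$ then $e' \in \{\stdFaceC_\alpha, \stdFaceC_\beta\}$, but both are round closed disks whose boundary circles ($\alpha$ or $\beta$) contain no line segments, so these disks have no $1$-dimensional subfaces---contradicting the fact that $e$, being a face of $C$ contained in $e'$, is a $1$-dimensional face of $e'$. Hence $\dim e' = 1$, and since a proper subface of a line segment is a single endpoint, $e$ must coincide with $e'$ and is therefore exposed. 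The main obstacle is essentially just noticing this containment trick: it sidesteps the need to explicitly parametrise the $1$-dimensional faces of $C$, which would otherwise be a delicate task given the geometry of the curve $\gamma$.
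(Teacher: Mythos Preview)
Your proposal is correct and follows essentially the same approach as the paper: reduce to facial exposedness of the compact base $C$, invoke Propositions~\ref{prop:extreme} and~\ref{prop:threedimf} for the $0$- and $2$-dimensional faces, and handle the $1$-dimensional faces via Proposition~\ref{prop:exp_face} together with the observation that the disk faces have no $1$-dimensional subfaces. The only cosmetic differences are that the paper cites an external reference for the base-to-cone reduction while you sketch it directly, and you spell out the case $\dim e'=1$ explicitly whereas the paper absorbs it into the contradiction argument.
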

\begin{proof} 
	If $C$ is facially exposed, then $\stdCone$ is also facially exposed (e.g., see \cite[Proposition~3.2]{Vera}). Therefore it is sufficient to demonstrate that $C$ is facially exposed. 
	
	We know from Propositions~\ref{prop:extreme} and \ref{prop:threedimf} that all zero-dimensional and two\-/dimensional faces of the set $C$ are exposed. If there is a one-dimensional face that is not exposed, then it must be a subface of some two-dimensional face that is exposed, see Proposition~\ref{prop:exp_face}. This is impossible, since the only two-dimensional faces of $C$ are disks by Proposition~\ref{prop:threedimf}, and so do not have one-dimensional subfaces. We conclude that all one-dimensional faces are exposed. 	
\end{proof}

The next result will be useful in what follows. It was proved within \cite[Theorem~3]{pataki}. Recall that a face $\stdFace \face \stdCone$ is \emph{properly minimal} if it does not coincide with the lineality space of $\stdFace$ and does not have any subfaces that strictly contain the lineality space. For instance, properly minimal faces of a pointed cone are its extreme rays. 

\begin{theorem}[Pataki criterion]\label{thm:pataki} Let $\stdCone\subseteq \R^n$ be a closed convex cone. If $\stdCone$ is facially exposed, and for some face $\stdFace \face \stdCone$ all properly minimal faces of $\stdFace^*$ are exposed, then $\stdCone^*+\stdFace^\perp$ is closed.
\end{theorem}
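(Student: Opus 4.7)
My plan starts from the standard identity $\stdFace^* = \cl(\stdCone^* + \stdFace^\perp)$, which holds for any face $\stdFace \face \stdCone$ because $\stdFace = \stdCone \cap \lspan \stdFace$. Consequently, closedness of $\stdCone^* + \stdFace^\perp$ is equivalent to the apparently stronger statement that every $z \in \stdFace^*$ admits an explicit decomposition $z = y + w$ with $y \in \stdCone^*$ and $w \in \stdFace^\perp$. The strategy is to argue by contradiction, paired with a \emph{peeling} step driven by the face lattice of $\stdFace^*$.

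Suppose for contradiction that some $z \in \stdFace^*$ is not in $\stdCone^* + \stdFace^\perp$, and pick sequences $y_n \in \stdCone^*$, $w_n \in \stdFace^\perp$ with $y_n + w_n \to z$. If $(y_n)$ were bounded, a convergent subsequence together with closedness of $\stdFace^\perp$ would place $z$ in $\stdCone^* + \stdFace^\perp$, so necessarily $\|y_n\| \to \infty$. Along a subsequence, $y_n/\|y_n\|$ converges to a unit vector $\bar y \in \stdCone^*$, and since $w_n/\|y_n\| \to -\bar y$ while $\stdFace^\perp$ is closed, $\bar y$ also lies in the \emph{conjugate face} $\stdCone^* \cap \stdFace^\perp$ of $\stdFace$. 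Thus any obstruction to closedness forces a non-trivial recession direction inside this conjugate face, which is itself contained in $\stdFace^*$.

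The core of the argument is to absorb that recession direction using the hypothesis on $\stdFace^*$. The face of $\stdFace^*$ generated by $\bar y$ contains some properly minimal face $G$ of $\stdFace^*$, which by assumption is exposed in $\stdFace^*$. I would use the corresponding exposing hyperplane to modify $(y_n)$ by subtracting carefully chosen multiples of elements of $G$ from $y_n$: facial exposedness of $\stdCone$ supplies enough structure on the faces of $\stdCone^*$ for this adjustment to remain inside $\stdCone^* + \stdFace^\perp$ while strictly reducing the size of the component of $y_n$ along $\bar y$. Iterating, in finitely many steps, should yield a bounded representing sequence for $z$, contradicting the choice of $z$.

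The step I expect to be the main obstacle is the bookkeeping of this induction: after peeling off the contribution associated with $G$, one needs the residual problem to live genuinely on a lower-dimensional face, and to inherit the exposedness hypothesis on its own properly minimal faces. An alternative, more conceptual route would be to reformulate closedness of $\stdCone^* + \stdFace^\perp$ as closedness of $\pi(\stdCone^*)$ in $\lspan \stdFace$, where $\pi$ is the orthogonal projection with kernel $\stdFace^\perp$, and then invoke classical image-closedness criteria for a convex cone under a linear map; the hypotheses would have to translate into an alignment between $(\ker \pi) \cap \stdCone^*$ and the lineality of $\stdCone^*$ along the face lattice of $\stdFace^*$, with exposedness of properly minimal faces supplying the base case of that alignment. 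Either route reduces the theorem to the recurring theme that exposedness of extremal (rays of) faces is exactly what prevents unbounded compensations between $\stdCone^*$ and $\stdFace^\perp$.
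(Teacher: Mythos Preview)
The paper does not contain its own proof of this theorem: it is quoted as a result from the literature, with the attribution ``It was proved within \cite[Theorem~3]{pataki}.'' So there is nothing in the paper to compare your proposal against.

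As for the proposal itself, what you have is a plausible outline rather than a proof. The opening steps are correct and standard: the identity $\stdFace^* = \cl(\stdCone^* + \stdFace^\perp)$, the contradiction setup, and the extraction of a recession direction $\bar y \in \stdCone^* \cap \stdFace^\perp$ from an unbounded approximating sequence are all fine. The difficulty, as you yourself flag, is the ``peeling'' step, and here the sketch stops short of an argument. The sentence ``facial exposedness of $\stdCone$ supplies enough structure on the faces of $\stdCone^*$ for this adjustment to remain inside $\stdCone^* + \stdFace^\perp$ while strictly reducing the size of the component of $y_n$ along $\bar y$'' is precisely the non-trivial content of the theorem, and you have not explained how the exposedness of a properly minimal face $G$ of $\stdFace^*$ concretely produces an element you can subtract from $y_n$ while staying in $\stdCone^*$. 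Nor have you said why the residual problem, after one peel, still satisfies the hypotheses needed to iterate. The alternative route via closedness of the projection $\pi(\stdCone^*)$ is a reasonable reformulation but is left at the same level of vagueness. In short: right architecture, but the load-bearing step is asserted rather than carried out.
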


\begin{proof}[Proof of Proposition~\ref{prop:fdc}] 
	
	The cone  $\stdCone$ is facially exposed by Proposition~\ref{prop:fexp}. Since every face $\stdFace$ of $\stdCone$ of dimension 2 and less is polyhedral (as is the case for any closed convex cone), we have from Theorem~\ref{thm:pataki} that $\stdFace^\perp+\stdCone^*$ is closed for all such faces. To finish the proof of facial dual completeness, it is sufficient to demonstrate that $\stdFace^\perp+\stdCone^*$ is closed for all three-dimensional faces $\stdFace$ of $\stdCone$. 
	
	We know from Proposition~\ref{prop:threedimf} that the only three-dimensional faces of $\stdCone$ are the lifts of the disk faces $\conv \alpha$ and $\conv \beta$. We will show that $\stdFace^\perp+\stdCone^*$ is closed for $\stdFace = \cone ((\conv \alpha)\times \{1\})$. The proof for the second three-dimensional face is analogous due to symmetry.

	Let $\tilde \stdCone = \cone \{\tilde C\times \{1\}\}$, where $\tilde C = \conv (\alpha\cup \beta)$. Since $\tilde \stdCone\subseteq \stdCone$, we have $\stdCone^* \subseteq \tilde \stdCone^*$ and 
	\[
	\stdCone^* + \stdFace^\perp \subseteq \tilde \stdCone^* + \stdFace^\perp.
	\]
	We will show that $\tilde \stdCone^*+\stdFace^\perp$ is closed and that $\stdCone^* + \stdFace^\perp \supseteq \tilde \stdCone^* + \stdFace^\perp$ (and hence $\stdCone^* + \stdFace^\perp = \tilde \stdCone^* + \stdFace^\perp$ is closed).

	We note that $\tilde{\stdCone} = \{(a,b,c,t)\:\; -t\leq c \leq t,\;\sqrt{a^2+b^2}\leq t\}$.
	Consider the face
	\[ \stdFace = \{(a,b,c,t)\in \tilde{\stdCone}\;:\; c=t\} = 
	\{(a,b,t,t)\;:\; \sqrt{a^2+b^2}\leq t\}\]
	and note that $\stdFace^\perp = \textup{span}\{(0,0,1,-1)\}$.
	
	The dual cone of $\tilde{\stdCone}$ is  
	\[ \tilde{\stdCone}^* = \{(x,y,z,w)\;:\; \sqrt{x^2+y^2} + |z| \leq w\}.\]
	We can then directly compute $\tilde{\stdCone}^*+\stdFace^\perp$ as 
	\begin{align*}
	\tilde{\stdCone}^*+\stdFace^\perp & = \{(x,y,z,w)\;:\;\exists \mu\in \RR\;\;\textup{s.t.}\;\;
	\sqrt{x^2+y^2} + |z-\mu| \leq w+\mu\}\\
	& = \{(x,y,z,w)\;:\; \exists \mu\in \RR\;\;\textup{s.t.}\;\;\sqrt{x^2+y^2} \leq (w+\mu)+(z-\mu),\\
		&\qquad\qquad\qquad\qquad\qquad \qquad\qquad\qquad\qquad\quad\sqrt{x^2+y^2}\leq (w+\mu)-(z-\mu)\}\\
	& = \{(x,y,z,w)\;:\; \exists \mu\in \RR\;\;\textup{s.t.}\;\;\sqrt{x^2+y^2} \leq z+w,\; \sqrt{x^2+y^2}\leq w-z+2\mu\}\\
	& = \{(x,y,z,w)\;:\; \sqrt{x^2+y^2} \leq z+w\}.
	\end{align*}
	This is the preimage of the second-order cone (which is closed) 
	under the linear map $(x,y,z,w) \mapsto (x,y,z+w)$ and so is a closed set (another way to see this is to observe that $\tilde{\stdCone}$ is a spectrahedron and $\stdFace$ is a face of $\tilde{\stdCone}$). It follows that	$\tilde{\stdCone}^* + \stdFace^\perp$ is closed.

	We now aim to show that $\tilde{\stdCone}^* + \stdFace^\perp \subseteq \stdCone^* + \stdFace^\perp$. We do this by identifying
	a particular set $E\subseteq \stdCone^*$ and showing that an arbitrary element of the boundary of $\tilde{\stdCone}^*+\stdFace^\perp$ 
	is contained in $E + \stdFace^\perp$. 
	
	Let $p(t) = (\cos(t),\sin(t),u(t))$ be the choice of exposing hyperplane for the point $(\cos(t),\sin(t),1)$
	from Proposition~\ref{prop:extreme}. We know that 
	\[\langle p(t),x\rangle \leq
	\langle p(t),(\cos(t),\sin(t),1)\rangle = 
	1+u(t)\;\;\textup{for all $x\in C$ and all $t\in [0,2\pi]$}.\]
	It then follows that 
	$(-\cos(t),-\sin(t),-u(t),1+u(t))\in \stdCone^*$.
	Our aim is to show that any element of $\tilde{\stdCone}^* + \stdFace^\perp$ can be expressed in the form
	\[ \alpha(-\cos(t),-\sin(t),-u(t),1 + u(t)) + \beta (0,0,1,-1)\]
	for some $t\in [0,2\pi]$, some $\alpha \geq 0$, and some $\beta\in \RR$. 
	Let $(x,y,z,w)$ be an arbitrary boundary point of $\tilde{\stdCone}^* + \stdFace^\perp$, in other words, an arbitrary point
	satisfying $\sqrt{x^2+y^2}= z+w$. Letting $\alpha = z+w = \sqrt{x^2+y^2}\geq 0$, we can find $t$ such that 
	$(x,y) = \alpha(-\cos(t),-\sin(t))$. Furthermore, given that particular $t$, we can write
	\[ \begin{bmatrix} z\\w\end{bmatrix} = 
	(z+w)\begin{bmatrix} -u(t)\\1+u(t)\end{bmatrix} + 
	(z + (z+w)u(t))\begin{bmatrix} 1\\-1\end{bmatrix} = \alpha\begin{bmatrix} -u(t)\\1+u(t)\end{bmatrix} + \beta\begin{bmatrix}1\\-1\end{bmatrix}\]
	where $\beta = (z+(z+w)u(t))$. 
	Overall, then, we see that $(x,y,z,w)\in \stdCone^* + \stdFace^\perp$.
	Since $(x,y,z,w)$ is an arbitrary element of the boundary of $\tilde{\stdCone}^*+\stdFace^\perp$, 
	we have showed that the boundary of $\tilde{\stdCone}^*+\stdFace^\perp$ is contained in  $\stdCone^* + \stdFace^\perp$.
	By convexity, it follows that $\tilde{\stdCone}^*+\stdFace^\perp \subseteq \stdCone^* + \stdFace^\perp$. 	
\end{proof}

\subsection{Non-amenability}

\begin{proof}[Proof of Proposition~\ref{prop:notamen}] From Proposition~\ref{prop:am_int} the intersection of two amenable sets is amenable. Our goal is to show that the set $C$ is not amenable. Since $C$ is the intersection of an affine subspace with $\stdCone$, this shows $\stdCone$ is not amenable. 
	
	By Proposition~\ref{prop:threedimf}, the disk $\stdFaceC = \conv \alpha$ is a face of $C$. We will apply the definition of amenability (Definition~\ref{def:am}) to the bounded set $B = \{(x,y,1): (x-1)^2 +y^2 \leq 1\}$.
	
	Let $w(t) = (2\cos(2t)-1,2\sin(2t),1)$ which lies in $\aff (\stdFaceC) \cap B$ for sufficiently small $t \geq 0$. 
	It is enough to show that 
	\[\frac{\dist(w(t),C)^2}{\dist(w(t),\stdFaceC)^2} \rightarrow 0
	\quad \text{as} \quad t\rightarrow 0^{+}.
	\] 
	 Now, using the Taylor expansion we have
	\begin{align*} 
	\dist(w(t),C)^2 & \leq \dist(w(t),\gamma(t))^2\\
	& = (1-(9/8)\cos(t) + (1/8)\cos(3t))^2\\
	& = \frac{9}{64}t^8 + O(t^{10}).
	\end{align*}
	On the other hand, noting that $(2\cos(2t)-1)^2+(2\sin(2t))^2 = 5-4\cos(2t)$, we see that
	\begin{align*}
	\dist(w(t),\stdFaceC)^2 & = \min_{u^2+ v^2 = 1} (u-(2\cos(2t)-1))^2 + (v-2\sin(2t))^2\\
	& = (1-\sqrt{5-4\cos(2t)})^2= 16t^4 + O(t^6).
	\end{align*}
	It then follows that 
	\[ \lim_{t\rightarrow 0}\frac{\dist(w(t),C)^2}{\dist(w(t),\stdFaceC)^2} \leq  \lim_{t\rightarrow 0}\frac{\frac{9}{64}t^8 + O(t^{10})}{16t^4 + O(t^6)} = 0.\]

	We deduce that there is no constant $\kappa $ satisfying the definition of amenability for the set $C$ and its face $\stdFaceC$, and hence by the earlier observation the cone $\stdCone$ is not amenable.
\end{proof}

\section{Amenability and projectionally exposed cones}\label{sec:amenproj}
The current situation is that the different notions of exposedness described in Proposition~\ref{prop:imp} all coincide in dimension three and there are examples in dimension four of facially exposed cones that are not nice \cite{Vera} and nice cones that are not amenable (Section~\ref{sec:nicenotamen}).

The next natural question would be to clarify the relationship between amenability and projectional exposedness. 
In this section, we will see, however, that if $\dim \stdCone \leq 4$, amenability implies projectional exposedness, so any counter-example can only appear in dimension five or more.

We start with the following technical criterion for projectional exposedness by 
Sung and Tam. For $\stdFace \face \stdCone$, we define its \emph{conjugate face} as $\stdFace^{\Delta} \coloneqq \stdCone^* \cap \stdFace^\perp$.
\begin{theorem}[Sung and Tam's criterion, item~(a) of Theorem~3.2 in \cite{ST90}]\label{thm:st}
Let $\stdCone$ be a pointed full-dimensional closed convex cone and $\stdFace \face \stdCone$ be a face of codimension $1$. 
Let $w$ be such that  $\stdFace ^ \Delta = \{\alpha w \mid \alpha \geq 0\}$. Then, 
$\stdFace$ is a projectionally exposed face if and only if $w$ is not the limit of a convergent sequence  $\{w_k\} \subseteq \stdCone^*$ such that 
the $w_k$ generate extreme rays  distinct from  $\stdFace ^ \Delta$.
\end{theorem}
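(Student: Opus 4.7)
The plan is to exploit the codimension-one hypothesis: $\lspan\stdFace = w^\perp$, so any idempotent linear map $\mathcal{P}\colon\ambSpace\to\ambSpace$ with $\mathcal{P}(\stdCone)=\stdFace$ must have range $w^\perp$ and a one-dimensional kernel $\Re v$ with $\inProd{w}{v}\neq 0$. After rescaling, $\inProd{w}{v}=1$ and $\mathcal{P}x = x - \inProd{w}{x}v$. By the standard adjoint duality, $\mathcal{P}(\stdCone)\subseteq\stdCone$ if and only if $\mathcal{P}^{*}(\stdCone^{*})\subseteq\stdCone^{*}$, where $\mathcal{P}^{*}\phi = \phi - \inProd{v}{\phi}w$. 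I would therefore recast the theorem as the existence of $v$ with $\inProd{w}{v}=1$ and $\mathcal{P}^{*}(\stdCone^{*})\subseteq\stdCone^{*}$, and show this is equivalent to $w$ being isolated among the extreme-ray generators of $\stdCone^{*}$.

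For the forward direction I would take $\mathcal{P}$ (and hence $v$) as given and pick any generator $\phi$ of an extreme ray of $\stdCone^{*}$ distinct from $\Re_+ w$. From $\phi = \mathcal{P}^{*}\phi + \inProd{v}{\phi}w$ with $\mathcal{P}^{*}\phi\in\stdCone^{*}$, if $\inProd{v}{\phi}>0$ then both summands lie in $\stdCone^{*}$, and the extremality of $\Re_+\phi$ would force $\inProd{v}{\phi}w\in\Re_+\phi$, placing $w$ on $\Re_+\phi$ and giving a contradiction. Hence $\inProd{v}{\phi}\leq 0$ for every such $\phi$, and a convergent sequence of such generators tending to $w$ would yield $\inProd{v}{w}\leq 0$ in the limit, contradicting $\inProd{v}{w}=1$.

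For the reverse direction I would let $E$ be the set of unit-length extreme-ray generators of $\stdCone^{*}$ not on $\Re_+ w$ and set $K':=\closure(\cone E)$; Minkowski's theorem applied to a compact base of the pointed cone $\stdCone^{*}$ gives $\stdCone^{*} = K' + \Re_+ w$. The key lemma I would establish is $w\notin K'$: if $w = \lim_k y_k$ with $y_k = \sum_{i=1}^{m_k}\lambda_i^k\phi_i^k$, $\lambda_i^k\geq 0$, $\phi_i^k\in E$, Carath\'eodory lets me take $m_k\leq\dim\ambSpace$, and after extracting a subsequence $\phi_i^k\to\phi_i\in\closure E$ with $\phi_i\neq w/\norm{w}$ by hypothesis; if the $\lambda_i^k$ were unbounded, rescaling by their maximum would force a relation $\sum_i\mu_i\phi_i=0$ with $\mu_i\ge 0$, $\phi_i\in\stdCone^{*}$, $\max_i\mu_i=1$, violating the pointedness of $\stdCone^{*}$. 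Hence the $\lambda_i^k$ converge to $\lambda_i\geq 0$ with $w = \sum_i\lambda_i\phi_i$, and the extremality of $\Re_+ w$ combined with $\phi_i\notin\Re_+ w$ forces each $\lambda_i = 0$, yielding the contradiction $w = 0$. Strict separation of $w$ from $K'$ would then produce $v$ with $\inProd{v}{w}=1$ and $\inProd{v}{\phi}\leq 0$ on $K'$, and for $\phi = \psi + \alpha w \in\stdCone^{*}$ with $\psi\in K'$, $\alpha\geq 0$ a direct computation gives $\mathcal{P}^{*}\phi = \psi - \inProd{v}{\psi}w \in K' + \Re_+ w = \stdCone^{*}$, delivering the required inclusion, so the associated $\mathcal{P}$ realises $\stdFace$ as a projectional image of $\stdCone$.

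The main obstacle is the claim $w\notin K'$: this is the only point at which the isolation hypothesis is substantively used, and it requires combining Carath\'eodory, the pointedness of $\stdCone^{*}$, and the extremality of $\Re_+ w$ to rule out both unbounded coefficient sequences and degenerate limits in $\closure E$. The remainder of the argument is then standard adjoint duality and separation of a point from a closed convex cone.
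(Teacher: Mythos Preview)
The paper does not prove this statement; Theorem~\ref{thm:st} is quoted verbatim from \cite{ST90} (item~(a) of their Theorem~3.2) and used as a black box in the proof of Theorem~\ref{thm:co_proj}. There is therefore no ``paper's own proof'' to compare against.

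Your argument is correct and self-contained. The codimension-one hypothesis forces any admissible projection into the rank-one form $\mathcal{P}x = x - \inProd{w}{x}v$, and the passage to the adjoint condition $\mathcal{P}^*(\stdCone^*)\subseteq\stdCone^*$ is standard. In the forward direction the extremality argument showing $\inProd{v}{\phi}\leq 0$ on every extreme ray other than $\Re_+ w$ is clean, and the contradiction at the limit is immediate. In the reverse direction the decomposition $\stdCone^* = K' + \Re_+ w$ follows from Minkowski's theorem applied to a compact base of the pointed cone $\stdCone^*$; the Carath\'eodory/pointedness/extremality chain proving $w\notin K'$ is sound (the hypothesis rules out $\phi_i = w/\norm{w}$, pointedness of $\stdCone^*$ rules out unbounded coefficients, and extremality of $\Re_+ w$ forces all limiting coefficients to vanish); and the separation step then yields the required $v$, after which the verification $\mathcal{P}^*\phi = \psi - \inProd{v}{\psi}w \in K' + \Re_+ w$ is a direct computation using $\inProd{v}{w}=1$ and $\inProd{v}{\psi}\leq 0$.
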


\begin{theorem}\label{thm:co_proj}
Let $\stdCone$ be a full-dimensional pointed closed convex cone. 
If $\stdFace \face \stdCone$ is an amenable face of 
codimension $1$, then $\stdFace$ is projectionally exposed.
\end{theorem}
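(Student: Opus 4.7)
The plan is to proceed by contradiction, invoking Sung and Tam's criterion (Theorem~\ref{thm:st}). Assume that $\stdFace$ is an amenable codimension-$1$ face of $\stdCone$ that is not projectionally exposed. Since amenability implies niceness (Proposition~\ref{prop:imp}), the cone $\stdCone$ is facially exposed, and the one-dimensional subspace $\stdFace^\perp$ meets $\stdCone^*$ in a single ray $\stdFace^\Delta = \Re_+ w$ with $\|w\|=1$; this is necessarily an extreme ray of $\stdCone^*$. Theorem~\ref{thm:st} then supplies a sequence $\{w_k\} \subseteq \stdCone^*$ of unit vectors generating extreme rays distinct from $\Re_+ w$, with $w_k \to w$.

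For each $k$, since $w_k$ is extreme in the pointed cone $\stdCone^*$, it lies on the boundary and the face $\stdFace_k := w_k^\perp\cap \stdCone$ is nonzero. I will select a unit vector $y_k \in \stdFace_k \setminus \stdFace$, taken to be an extreme ray of $\stdCone$. Achieving this requires ruling out, by a subsequence argument, the degenerate situation where $\stdFace_k \subseteq \stdFace$ for all large $k$, which would force $\stdFace_k^\Delta$ to be a face of $\stdCone^*$ of dimension at least two containing both $\Re_+ w$ and $\Re_+ w_k$. From $\langle w_k, y_k\rangle = 0$ one obtains
\[
0 < \langle w, y_k\rangle = \langle w - w_k, y_k\rangle \leq \|w - w_k\| \to 0.
\]
Setting $x_k := y_k - \langle w, y_k\rangle w$ gives $x_k \in w^\perp = \lspan \stdFace$ with $\dist(x_k, \stdCone) \leq |\langle w, y_k\rangle|$, and amenability with constant $\kappa$ then yields $\dist(x_k, \stdFace) \leq \kappa |\langle w, y_k\rangle|$, whence by the triangle inequality
\[
\dist(y_k, \stdFace) \leq (\kappa+1)\, |\langle w, y_k\rangle|.
\]
Passing to a convergent subsequence, $y_k \to y \in \stdFace$ non-tangentially: writing $z_k$ for the $\stdFace$-projection of $y_k$, the unit approach directions $d_k := (y_k - z_k)/\|y_k - z_k\|$ satisfy $\langle w, d_k\rangle = \langle w, y_k\rangle/\|y_k-z_k\| \geq (\kappa+1)^{-1}$, so any limit $d$ is a unit vector in the tangent cone of $\stdCone$ at $y$ with $\langle w, d\rangle \geq (\kappa+1)^{-1} > 0$.

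The main obstacle, and the heart of the proof, is turning this non-tangential approach into a contradiction with the hypothesis that the $w_k$ are distinct extreme rays of $\stdCone^*$ converging to $w$. The intuition is that the existence of an asymptotic direction $d$ with $\langle w, d\rangle > 0$ at $y$ corresponds, via the conjugate face bijection $\stdFace \leftrightarrow \stdFace^\Delta$ for the nice cone $\stdCone$, to exposing functionals of the $y_k$ in $\stdCone^*$ converging tangentially to a functional exposing $y$, not to $w$ itself; this should be incompatible with $w_k \to w$ being a Sung and Tam sequence. Making the duality step precise, and simultaneously handling the degenerate case $\stdFace_k \subseteq \stdFace$ by a separate analysis of $\stdFace_k^\Delta$, constitutes the principal technical difficulty of the argument.
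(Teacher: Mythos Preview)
Your proposal is incomplete: you explicitly identify the ``principal technical difficulty'' --- closing the duality loop from the primal tangent direction $d$ back to a contradiction with the extremality of the $w_k$ --- and leave it unresolved. That is not a minor gap; it is the entire content of the argument. The existence of a unit tangent direction $d$ to $\stdCone$ at a point $y \in \stdFace$ with $\langle w, d\rangle > 0$ is not by itself contradictory: since $\stdFace$ is a proper face of a full-dimensional cone, such directions exist trivially at every point of $\stdFace$ (any direction from $y$ into $\interior \stdCone$ works). Your construction ties $d$ to the particular sequences $y_k$ and $w_k$, but you give no mechanism for exploiting that link, and the ``conjugate face bijection'' intuition does not point to one. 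The degenerate case $\stdFace_k \subseteq \stdFace$ is likewise left open. There is also a small slip at the outset: you invoke Proposition~\ref{prop:imp} to deduce that $\stdCone$ is facially exposed, but the hypothesis is only that the single face $\stdFace$ is amenable. This is harmless --- codimension-one faces are always exposed, so Theorem~\ref{thm:st} applies regardless --- but the justification you gave is not valid as stated.

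The paper's proof is entirely different and avoids the primal cone altogether. Amenability of $\stdFace$ is, by Proposition~\ref{prop:am_eq}, bounded linear regularity of $\stdCone$ and $\lspan \stdFace$; by \cite[Theorem~10]{BBL99} this is equivalent to \emph{property (G)} for $\stdCone^*$ and $\stdFace^\perp$: there exists $\alpha > 0$ such that
\[
U \cap (\stdCone^* + \stdFace^\perp) \subseteq \alpha\bigl(U\cap \stdCone^* + U\cap \stdFace^\perp\bigr),
\]
with $U$ the unit ball. Since $\stdFace^\perp = \Re w$, applying this inclusion to $2\alpha(w_k - w) \in U \cap (\stdCone^* + \stdFace^\perp)$ for large $k$ yields $w_k = \tfrac{1}{2}y_k + \bigl(1+\tfrac{1}{2}\beta_k\bigr) w$ with $y_k \in \stdCone^*$ and $\beta_k \in [-1,1]$, so the coefficient of $w$ is strictly positive. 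Extremality of the ray through $w_k$ then forces $w \in \Re_+ w_k$, contradicting the choice of $w_k$. The extremality hypothesis supplied by Theorem~\ref{thm:st} is consumed in a single line, directly in the dual, with no detour through primal tangent geometry.
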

\begin{proof}
$\stdCone$ and $\lspan \stdFace$ are boundedly linearly regular by Proposition~\ref{prop:am_eq}. By \cite[Theorem~10]{BBL99}, this means that the so-called \emph{property (G)} holds for $- \stdCone^*$ and  $\stdFace^\perp$.
That is, denoting the unit ball in $\ambSpace$ by $U = \{x \mid \norm{x}\leq 1\}$, there exists $\alpha > 0$ such that
\[
U \cap (- \stdCone^* +   \stdFace^\perp) \subseteq 
\alpha (U \cap(- \stdCone^*)  + U\cap\stdFace^\perp ).
\]
This implies that 
\begin{equation}\label{eq:am_g}
U \cap (\stdCone^* +   \stdFace^\perp) \subseteq 
\alpha (U \cap\stdCone^*  + U\cap\stdFace^\perp ).
\end{equation} 
Let $w$ be such that $\norm{w} = 1$ and $\stdFace^\Delta =  \{\beta w \mid \beta        \geq 0\}$ and suppose that $\stdFace$ is not projectionally exposed. Then, by Theorem~\ref{thm:st},
there exists a sequence  $\{w_k\} \subseteq \stdCone^*$ such that $w_k \to w$
and the $w_k$ generate extreme rays that are all distinct from  $\stdFace ^ \Delta$.

Because $\stdFace$ has codimension $1$, $\stdFace^\perp$  is generated by $w$. Therefore,
\eqref{eq:am_g} implies
\begin{equation}\label{eq:am_g2}
U \cap (\stdCone^* +   \stdFace^\perp)  \subseteq 
\alpha (U \cap\stdCone^*  + \{\beta w \mid \beta \in [-1,1] \} ).
\end{equation} 
Since $2\alpha (w_k - w) \in \stdCone^* + \stdFace^\perp$
and $w_k - w \to 0$, in view of \eqref{eq:am_g2}, for sufficiently large $k$, there exists $y_k \in U \cap \stdCone^*$ and $\beta _k \in [-1,1]$ such that $
2\alpha (w_k -w) = \alpha\left({y_k} + \beta _k {w}\right)$. 
Equivalently,
 \begin{equation}\label{eq:wkyk}
 w_k =  \frac{y_k}{2} + \left(1+\frac{\beta _k}{2}\right)w.
 \end{equation}
Since $\beta_k \in [-1,1]$, we have $\left(1-\frac{\beta _k}{2}\right) > 0$. So, 
\eqref{eq:wkyk} implies that $w$ lies in the extreme ray generated by $w_k$ (for sufficiently large $k$), which is a contradiction.
Therefore, the sequence $\{w_k\}$ cannot exist and 
$\stdFace$ must be projectionally exposed.
\end{proof}

Next, we recall that a closed convex cone $\stdCone$ is projectionally exposed if and only if 
its ``pointed component'' $\stdCone \cap \lineality \stdCone^\perp$ is projectionally exposed, see \cite[Lemma~2.4]{ST90} and its proof.
 We also have the following well-known fact.
\begin{lemma}[Folklore\footnote{This fact is referenced in several articles but it is not completely trivial to find a proof. Barker used the condition $\stdFace = \stdFace^{\Delta\Delta}$ as the definition of exposed face, see \cite[Definition 2.A.9]{Ba81} and mentioned that this coincides with the definition using exposing hyperplanes. Barker's definition was adopted in some works on convex cones in 80s, e.g., \cite{Tam85,ST90}. We also noted that Br{\o}ndsted's book is sometimes mentioned as reference for the proof, but, actually, the proof is given for compact convex sets only, see \cite[Theorem~6.7]{Br83}.}]\label{lem:exposed}
$\stdFace \face \stdCone$ is facially exposed if and only if $\stdFace = \stdFace^{\Delta\Delta}$.
\end{lemma}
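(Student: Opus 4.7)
My plan is to prove both directions by directly unpacking the definitions, using the fact that for a closed convex cone the supporting hyperplanes through the origin are exactly the sets $\{x : \langle w, x\rangle = 0\}$ with $w \in \stdCone^*$.

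For the forward direction, I will assume $\stdFace$ is exposed, say $\stdFace = \stdCone \cap H$ for a supporting hyperplane $H$ of $\stdCone$. Since $\stdCone$ is a cone and $\stdFace$ contains the origin, $H$ passes through $0$ and can be written $H = \{x : \langle w, x\rangle = 0\}$ for some nonzero $w \in \stdCone^*$ with $\langle w, x\rangle = 0$ on $\stdFace$. This $w$ lies in $\stdCone^* \cap \stdFace^\perp = \stdFace^\Delta$. The inclusion $\stdFace \subseteq \stdFace^{\Delta\Delta}$ is immediate from the definitions (any $x \in \stdFace$ satisfies $x \in \stdCone$ and $\langle y, x\rangle = 0$ for all $y \in \stdFace^\perp \supseteq \stdFace^\Delta$). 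For the reverse inclusion, if $x \in \stdFace^{\Delta\Delta} = \stdCone \cap (\stdFace^\Delta)^\perp$, then in particular $\langle w, x\rangle = 0$ and $x \in \stdCone$, so $x \in \stdCone \cap H = \stdFace$.

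For the converse, the key step is to choose $w \in \reInt \stdFace^\Delta$ and show it exposes $\stdFace$. The hyperplane $H = \{x : \langle w, x\rangle = 0\}$ supports $\stdCone$ (since $w \in \stdCone^*$) and contains $\stdFace$ (since $w \in \stdFace^\perp$). I then need to verify $\stdCone \cap H \subseteq \stdFace$. Given $x \in \stdCone$ with $\langle w, x\rangle = 0$, I will show that in fact $\langle y, x\rangle = 0$ for every $y \in \stdFace^\Delta$, which places $x$ in $(\stdFace^\Delta)^\perp \cap \stdCone = \stdFace^{\Delta\Delta} = \stdFace$ by hypothesis. This is the main technical step: the relative-interior trick says that for any $y \in \stdFace^\Delta$ there is some $\varepsilon > 0$ with $w + \varepsilon(w - y) \in \stdFace^\Delta \subseteq \stdCone^*$; pairing this with $x \in \stdCone$ and using $\langle w, x\rangle = 0$ forces $\langle y, x\rangle \le 0$, while $y \in \stdCone^*$ gives the opposite inequality, hence equality.

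The only subtlety I anticipate is the degenerate case $\stdFace = \stdCone$. In that case $\stdFace^\Delta = \stdCone^* \cap \stdCone^\perp = \lineality(\stdCone^*)$, and one verifies $\stdFace^{\Delta\Delta} = \stdCone \cap (\lineality \stdCone^*)^\perp = \stdCone$ directly, while the ``exposed'' convention trivially holds for $\stdFace = \stdCone$ (take $H$ to be the whole ambient space, or note that $\stdCone$ is by convention a face of itself and is always considered exposed). Apart from this, the argument is essentially a clean exercise in duality, so I do not expect any hard obstacle — the main care needed is just the choice $w \in \reInt \stdFace^\Delta$, which makes $w$ a single functional that simultaneously detects membership in every functional of $\stdFace^\Delta$.
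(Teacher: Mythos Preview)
Your proof is correct and follows essentially the same strategy as the paper. For the direction $\stdFace = \stdFace^{\Delta\Delta} \Rightarrow \stdFace$ exposed, both arguments expose $\stdFace$ by a functional chosen in $\reInt \stdFace^\Delta$; the paper phrases this abstractly as ``conjugate faces are always exposed'' (since $\stdFace^\Delta = \stdCone^* \cap \{x\}^\perp$ for any $x \in \reInt \stdFace$), while you spell out the same relative-interior trick explicitly. For the forward direction your route is actually slightly more direct: from $\stdFace = \stdCone \cap \{w\}^\perp$ you simply note $w \in \stdFace^\Delta$, so $\stdFace^{\Delta\Delta} \subseteq \stdCone \cap \{w\}^\perp = \stdFace$; the paper instead passes through the minimal face $\hat\stdFace$ of $\stdCone^*$ containing $w$ and uses $\hat\stdFace^\Delta = \stdFace$, which amounts to the same thing but with an extra step.
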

\begin{proof}
We note that $\stdFace^\Delta$ is always an exposed face of $\stdCone^*$ because if $x \in \reInt \stdFace$, we have $\stdFace^\Delta = \stdCone^*\cap \{x\}^\perp $. Therefore, if  $\stdFace = \stdFace^{\Delta\Delta}$, then $\stdFace$ is facially exposed. Conversely, suppose $\stdFace$ is facially exposed and let $s \in \stdCone^*$ be such that $\stdFace = \stdCone \cap \{s\}^\perp$. Then, 
$s \in \stdFace^\Delta$. Let $\hat \stdFace$ be the minimal face of $\stdCone^*$ containing $s$, we have $s \in \reInt \hat \stdFace$ and $\hat \stdFace \face \stdFace^\Delta$. Therefore, $\stdFace^{\Delta\Delta} \face \hat \stdFace^\Delta = \stdFace$. Since, we always have $\stdFace \face \stdFace^{\Delta\Delta}$ for any face, this shows that $\stdFace^{\Delta\Delta} = \stdFace$.	
\end{proof}

\begin{corollary}\label{cor:proj}
If $\stdCone \subseteq \ambSpace$ is closed convex cone of 
dimension $\dim \stdCone \leq 4$, then 
$\stdCone$ is amenable if and only if 
it is projectionally exposed.
\end{corollary}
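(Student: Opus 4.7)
The direction from projectional exposedness to amenability is already contained in Proposition~\ref{prop:imp}, so I focus on the converse. My plan is a case analysis on the dimension of an arbitrary face $\stdFace \face \stdCone$, after first reducing to the pointed, full-dimensional setting via Proposition~\ref{prop:am_int}(v) and the analogous reduction for projectional exposedness from \cite[Lemma~2.4]{ST90}. If $\dim \stdCone \leq 3$, Proposition~\ref{prop:imp} immediately gives the result: amenability implies facial exposedness, which in this dimension range also implies projectional exposedness by Poole--Laidacker.

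Suppose then that $\dim \stdCone = 4$. The cases $\dim \stdFace \in \{0, 4\}$ are trivial, and if $\dim \stdFace = 3$ then $\stdFace$ is a codimension-one amenable face of $\stdCone$ (using Proposition~\ref{prop:inh} for amenability of $\stdFace$), so Theorem~\ref{thm:co_proj} applies. The remaining cases $\dim \stdFace \in \{1, 2\}$ require a direct construction of the idempotent linear map from functionals in $\stdCone^*$.

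For an extreme ray $\stdFace = \Re_+ v$, pointedness of $\stdCone$ forces $-v \notin \stdCone$, so strict separation produces $\phi \in \stdCone^*$ with $\phi(v) > 0$; after rescaling so that $\phi(v) = 1$, the map $\mathcal{P}(x) \coloneqq \phi(x)\,v$ is idempotent linear with $\mathcal{P}(\stdCone) = \stdFace$. For $\dim \stdFace = 2$, write $\stdFace = \cone(v_1, v_2)$ with two distinct extreme rays; these are faces of $\stdCone$ by transitivity of facehood, and since amenability implies facial exposedness, each is exposed by some $s_i \in \stdCone^*$ with $\stdCone \cap \{s_i\}^\perp = \Re_+ v_i$. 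The key observation is that $s_i(v_j) > 0$ whenever $j \neq i$, since $v_j \in \stdCone$ but $v_j \notin \Re_+ v_i = \stdCone \cap \{s_i\}^\perp$. Setting $\phi_1 \coloneqq s_2/s_2(v_1)$ and $\phi_2 \coloneqq s_1/s_1(v_2)$ gives $\phi_i \in \stdCone^*$ with $\phi_i(v_j) = \delta_{ij}$, so $\mathcal{P}(x) \coloneqq \phi_1(x)\,v_1 + \phi_2(x)\,v_2$ is idempotent linear, sends $\stdCone$ into $\stdFace$, and restricts to the identity on $\stdFace$; hence $\mathcal{P}(\stdCone) = \stdFace$. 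I do not anticipate a major obstacle: the codimension-one case is exactly what Theorem~\ref{thm:co_proj} handles, and the remaining low-dimensional faces admit this direct $\stdCone^*$-construction, together exhausting the face lattice of a four-dimensional cone.
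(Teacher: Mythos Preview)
Your proposal is correct and follows essentially the same route as the paper's proof: reduce to the pointed full-dimensional setting, dispose of $\dim\stdCone\leq 3$ via Proposition~\ref{prop:imp}, and then handle the four-dimensional case by a case split on $\dim\stdFace$, invoking Theorem~\ref{thm:co_proj} for the codimension-one face and building explicit projections from functionals in $\stdCone^*$ for faces of dimension $1$ and $2$.

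Two minor remarks. In the $\dim\stdFace = 3$ case your parenthetical appeal to Proposition~\ref{prop:inh} is unnecessary: what Theorem~\ref{thm:co_proj} needs is that $\stdFace$ is an amenable \emph{face} of $\stdCone$, and that is immediate from the hypothesis that $\stdCone$ is amenable (every face of an amenable set is an amenable face by definition). In the $\dim\stdFace = 2$ case your argument via exposing functionals $s_i$ is slightly more direct than the paper's, which instead works with the conjugate faces $\stdFace_x^{\Delta},\stdFace_y^{\Delta}$ and Lemma~\ref{lem:exposed} to conclude that neither contains the other before extracting the dual pair $z_1,z_2$; the two presentations are equivalent, and yours avoids the detour through $\stdFace = \stdFace^{\Delta\Delta}$.
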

\begin{proof}
By Proposition~\ref{prop:imp}, projectionally exposed cones are amenable and the converse holds if $\dim \stdCone \leq 3$. So, we assume that $\dim \stdCone = 4$ and that $\stdCone$ is amenable. Then, span $\stdCone$ is linearly isomorphic to $\Re^4$ and 
the same isomorphism shows that 
$\stdCone$ is linearly isomorphic to a cone $\hat \stdCone \subseteq \Re^4$ which is amenable (by item~\eqrefit{prop:am_int:3} of Proposition~\ref{prop:am_int}) and full-dimensional. 
Let $\widetilde{\stdCone} \coloneqq \hat \stdCone \cap (\lineality \hat \stdCone)^\perp$. By item~\eqrefit{prop:am_int:5} of Proposition~\ref{prop:am_int}, $\widetilde{\stdCone}$ is amenable. Furthermore, $\stdCone$ is projectionally exposed if and only if $\widetilde{\stdCone}$ is projectionally exposed, because  $\hat \stdCone$ and $\stdCone$ are linearly isomorphic and $\widetilde{\stdCone}$ is the pointed component of $\hat \stdCone$.

$\widetilde{\stdCone}$ is an amenable pointed full-dimensional closed convex cone and we will show that it is also projectionally exposed. For that, 
let $\stdFace \face \widetilde{\stdCone}$. If $\stdFace = \{0\}$ or $\stdFace = \widetilde{\stdCone}$, 
then the zero map and the identity map are projections that map $\widetilde{\stdCone}$ onto $\{0\}$ and $\widetilde{\stdCone}$, respectively. 
Next, we consider three cases.

\fbox{Case 1: $\dim \stdFace = 1$}
In this case, $\stdFace$ can be written 
as $\stdFace = \{\alpha x \mid \alpha \geq 0 \}$, for some nonzero $x \in \widetilde{\stdCone}$. Let $z \in \widetilde{\stdCone}^*$ be such that $\inProd{x}{z} = 1$. At least one such $z$ must exist because otherwise we would have $x \in \widetilde{\stdCone}^{*\perp} = \lineality \widetilde{\stdCone} = \{0\}$.
With that, the projection defined by $P = xz^T$
satisfies $P(\widetilde{\stdCone}) = \stdFace$.

\fbox{Case 2: $\dim \stdFace = 2$}
The argument is essentially the same as \cite[Corollary~4.8]{ST90}.
$\stdFace$ can be written 
as $\stdFace = \{\alpha x + \beta y  \mid \alpha \geq 0, \beta \geq 0 \}$, where $x$ and $y$ generate different extreme rays of $\widetilde{\stdCone}$ denoted respectively by $\stdFace_x$ and $\stdFace _y$. 

Since $\widetilde{\stdCone}$ is facially exposed, 
$\stdFace_x ^ \Delta = \widetilde{\stdCone}^* \cap \{x\}^\perp$ and 
$\stdFace _y ^ \Delta = \widetilde{\stdCone}^* \cap \{y\}^\perp$ must be different faces of $\widetilde{\stdCone}^*$  by  Lemma~\ref{lem:exposed}.
Furthermore, $\stdFace_x ^ \Delta$ and $\stdFace_y ^ \Delta$ are not contained in each other.
In particular, we can find $z_1 \in \stdFace_x ^ \Delta$ that does not belong to $\stdFace _y ^ \Delta $ and, also, $z_2 \in \stdFace_y ^ \Delta$ that does not belong to $\stdFace _x ^ \Delta $. Rescaling $z_1, z_2$ if necessary, we have 
\[
\inProd{x}{z_1} = 0, \inProd{x}{z_2} = 1, \inProd{y}{z_1} = 1, \inProd{y}{z_2} = 0.
\]
Therefore, the projection defined by $P = xz_2^T + yz_1^T$ maps $\widetilde{\stdCone}$ to $\stdFace$.

\fbox{Case 3: $\dim \stdFace = 3$}
Follows by Theorem~\ref{thm:co_proj}.
\end{proof}

\section{Open Problems}

In this section we outline some open questions related to the geometry of convex cones, motivated by our study of amenability.

\subsection{Characterisation of amenability and niceness via slices}

The polar of the slice $C$ of $\stdCone$ studied in Section~\ref{sec:nicenotamen} is shown in Fig.~\ref{fig:Cpolar}.
\begin{figure}[ht]
	\centering
	\includegraphics[width=0.45\textwidth]{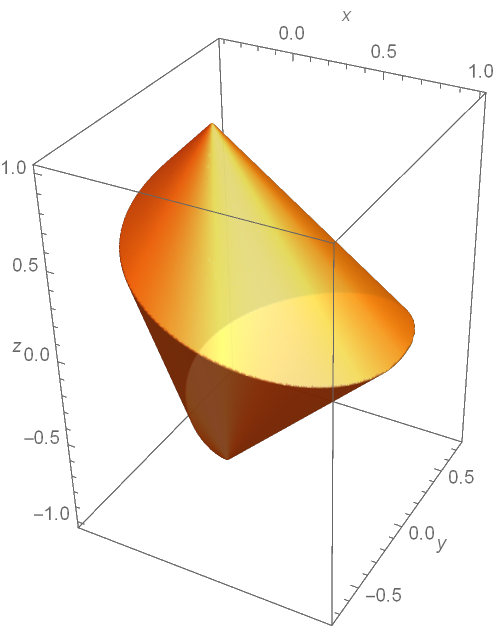}\quad 
	\includegraphics[width=0.45\textwidth]{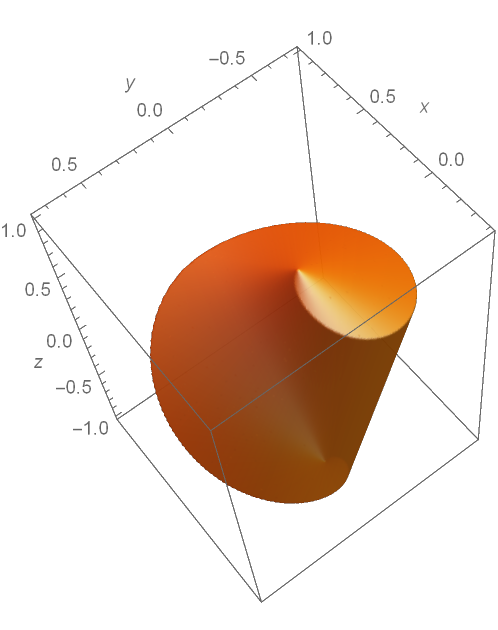}\quad 
	\caption{The polar of the set $C$ from Section~\ref{sec:nicenotamen}.}
	\label{fig:Cpolar}
\end{figure} 
This set appears to have a peculiar arrangement of faces adjacent to the two tips (dual to the disk faces of $C$). The cone of feasible directions at each tip appears to be closed, however, the set lacks the exactness of tangent approximation (ETA) property (see \cite{MRY15}) at these points. In other words, there is no neighbourhood in which the set coincides with its tangent. (Also note that there are sequences of extreme points converging to the tips, and hence the conditions of Theorem~\ref{thm:st} are not satisfied: we immediately see from this image that the relevant cone is not projectionally exposed.) On the other hand, it appears---based on this and other known four-dimensional examples---that niceness in $\R^4$ corresponds to the closedness of the set of feasible directions at extreme points of the polar that are dual to the two-dimensional faces of the primal slice. We wonder if it is possible to obtain a general characterisation of (and distinguish between) niceness, amenability and projectional exposedness using these kinds of tangential properties pertaining to the polars of slices.

\subsection{Projectionally exposed cones} 

It was shown in \cite{L19} that projectionally exposed cones are amenable, however we do not know whether the converse is false. We failed to construct an example of an amenable cone that is not projectionally exposed. In view of Corollary~\ref{cor:proj}, if such an example exists it must be of dimension at least five.

In addition, as seen in Table~\ref{tab:table1}, it is unknown whether homogeneous cones are projectionally exposed and whether projectional exposedness is preserved under intersections. 
We note that a positive answer to the latter would imply projectional exposedness of all spectrahedral cones, including all the homogeneous cones. 

\subsection{Tangentially and strongly tangentially exposed cones}

It was shown in \cite{RT} that necessary and sufficient conditions for niceness can be formulated using a yet another strengthening of the notion of facial exposedness. Specifically, if a cone $\stdCone$ is nice, then the tangent cone to every face of $\stdCone$ is the intersection of the span of this face with the tangent to the entire cone $\stdCone$ (this condition is called \emph{tangential exposure}). If this condition is satisfied for all tangent cones of $\stdCone$, and recursively for all tangents of tangents, then the cone is nice (this condition is called strong tangential exposure, and the higher-order tangents are called \emph{lexicographic tangents}). It is unknown what is the relationship between these tangential conditions and the notions of amenability and projectional exposure.

\section*{Acknowledgements}
We thank the referees and the associate editor for their  comments, which helped to improve the paper.

We are grateful to the School of Mathematics and Statistics at UNSW Sydney for providing financial assistance via a startup research grant that helped fund the meeting of all three coauthors in Sydney that initiated this project. 

Vera Roshchina is grateful to the Australian Research Council for continuing financial support that contributed to the successful execution of this work. 

Bruno F.\ Louren\c{c}o is grateful for the support of the JSPS through the Grant-in-Aid for Early-Career Scientists 19K20217 and the Grant-in-Aid for Scientific Research (B)18H03206 and 21H03398.

James Saunderson is the recipient of an Australian Research Council Discovery Early Career Researcher Award (project number DE210101056) funded by the Australian Government.
	
\bibliographystyle{abbrvurl}
\bibliography{refs}
\end{document}